\newtheorem{teo}{Theorem}[section]
\newtheorem{prop}[teo]{Proposition}
\newtheorem{lema}[teo]{Lemma}
\newtheorem{coro}[teo]{Corollary}
\newtheorem{rem}[teo]{Remark}
\newtheorem{defi}[teo]{Definition}
\newtheorem{eje}[teo]{Example}
\newtheorem{claim}{Claim}
\newtheorem*{claim*}{Claim}
\DeclareMathSymbol{\varnothing}{\mathord}{AMSb}{"3F}
\renewcommand{\emptyset}{\varnothing}
\def\cK{\EuScript{K}}
\def\R{{\mathbb R}}
\def\N{{\mathbb N}}
\def\M{{\mathcal M}}
\def\sm{\setminus}
\title{Recurrence and transience for suspension flows}
\date{\today}
\author{Godofredo Iommi} \address{Facultad de Matem\'aticas,
Pontificia Universidad Cat\'olica de Chile (PUC), Avenida Vicu\~na Mackenna 4860, Santiago, Chile}
\email{\href{mailto:giommi@mat.puc.cl}{giommi@mat.puc.cl}}
\urladdr{\url{http://www.mat.puc.cl/\textasciitilde giommi/}}
\author{Thomas Jordan} \address{The School of Mathematics, The University of Bristol, University Walk, Clifton, Bristol, BS8 1TW, UK}
\email{\href{mailto:Thomas.Jordan@bristol.ac.uk}{Thomas.Jordan@bristol.ac.uk}}
\urladdr{\url{http://www.maths.bris.ac.uk/~matmj/}}
\author{Mike Todd}
\address{Mike Todd\\ Mathematical Institute\\
University of St Andrews\\
North Haugh\\
St Andrews\\
KY16 9SS\\
Scotland} \email{\href{mailto:mjt20@st-andrews.ac.uk}{mjt20@st-andrews.ac.uk}}
\urladdr{\url{http://www.mcs.st-and.ac.uk/~miket/}}
\begin{document}

\begin{abstract}
We study the thermodynamic formalism for  suspension flows  over countable Markov shifts  with roof functions not necessarily bounded away from zero. We establish conditions to ensure the existence and uniqueness of equilibrium measures for regular potentials. We define the notions of recurrence and transience of a potential in this setting. We define the \emph{renewal flow}, which is a symbolic model for a class of flows with diverse recurrence features.  We study the corresponding thermodynamic formalism, establishing conditions for the existence of equilibrium measures and phase transitions.
Applications are given to suspension flows defined over interval maps having parabolic fixed points.
\end{abstract}

\maketitle

\section{Introduction}

In this paper we study suspension flows, that is a discrete dynamical system on the `base' along with a `roof' function which determines the time the flow takes to return to this base.  In particular we consider suspension flows over Markov shifts.  The ergodic theory of suspension flows with a Markov structure has been studied extensively in the context of Axiom A flows \cite {bo1, br, ra}, geodesic flows on surfaces of negative curvature \cite{mo, ar, Ser86}, and billiard flows \cite{bs1, bs2, bsc1}.  The main novelty here, aside from the facts that we develop a more general theory of thermodynamic formalism than in the works above and that we study countable Markov shifts, is that we do not assume that the roof function is uniformly bounded away from zero, which leads to significant technical difficulties.

Thermodynamic formalism for suspension flows over countable Markov shifts began with the work of Savchenko \cite{sav}. He gave a  definition of topological entropy in the case that the roof function depends only on the first coordinate, but it is not necessarily bounded away from zero. Barreira and Iommi \cite{bi1} proposed a definition of topological pressure when the roof function is bounded away from zero and established the variational principle in this case. This definition gave the pressure implicitly as the zero of a related function. Recently, Kempton \cite{ke} and independently Jaerisch, Kesseb\"ohmer and Lamei \cite{jkl}  gave a definition of pressure in the case that the roof function is not necessarily  bounded away from zero. This definition is analogous to that of the Gurevich pressure in the discrete case. The regularity of the pressure function for this type of flow (with roof function bounded away from zero) was studied by Iommi and Jordan \cite{ij}. Conditions in order for the pressure to be real analytic or to exhibit phase transitions were found.

We characterise potentials on the flow that have equilibrium measures  (Theorem \ref{thm:flow eqstate}). Moreover, we prove that when equilibrium measures do exist, they are unique (Theorem \ref{thm:uni}). We also extend to this continuous time setting the definitions of  positive recurrence, null recurrence and transience of a potential introduced in the discrete time by Sarig \cite{Sar99}.  There are several difficulties which must be addressed when trying to prove this type of result. To start with, the phase space is not compact, therefore the classical functional analytic approach can not be used directly. Moreover, there is no bijection between the set of invariant measures for the flow and the corresponding one for the base map. This prevents us from reducing the study of the thermodynamic formalism for the flow to that of the shift (this was the  strategy used by Bowen and Ruelle \cite{br} in the compact setting).

In order to give a relatively straightforward family of examples to which we can apply our theory, we define and study in some detail a semi-flow we call the \emph{renewal flow}. This is a suspension flow with base map the renewal shift (see Section~\ref{sec:reflow} for a precise definition).  This is a well-studied system which allows us to define a family of potentials exhibiting a range of thermodynamic behaviours.  For example, the base map can be set up to be very slowly mixing (like a model for the Manneville Pomeau \cite{ManPom} map), while the roof function can be designed so that the flow has one of a selection of thermodynamical properties.  Indeed in Examples~\ref{eg:better-worse} and \ref{eg:trans null} we demonstrate that we can arrange roof functions and potentials so that any pair from the set $\{$positive recurrent, null recurrent, transient$\}$  can be realised with the first behaviour on the base and the second on the corresponding flow.

Returning to the broad motivations for our work, the suspension flows here provide models for various non-uniformly hyperbolic flows where the thermodynamic formalism is not well developed.  These are systems which behave like Axiom A systems in most of the phase space, but not in all of it. It is possible for these systems to exhibit pathological behaviour in small parts of the domain. Interest in these systems is partially due to the novel dynamical features they exhibit (see for example the statistical laws and rates of decay of correlation that can occur in \cite{BalMel08, fmt,mt,m1,m2}).  Also, these systems have great importance in the program aimed at obtaining a global description of the space of dynamical systems  (see \cite{bdv}). While these systems still preserve some of the good properties of Axiom A (uniformly hyperbolic) systems, this is not enough to retain their  regular dynamical properties. Suspension flows over countable Markov shifts serve as symbolic models for some of these flows.  For example, Bufetov and Gurevich \cite{bg} and Hamenst\"adt \cite{h}  have coded Teichm\"uller flows in this way and have used this symbolic representation to prove the uniqueness of the measure of maximal entropy.  Another classical example of a flow that is modelled by this type of suspension flows is the geodesic flow over the modular surface (see \cite{ar}).  A further example which can be studied with the techniques we develop here is the geodesic flow defined over an hyperbolic manifold $\mathcal{H}^{N+1} / \Gamma$, where $\mathcal{H}^{N+1}$ is the hyperbolic space and $\Gamma$ a Schottky group. The relevant feature is that $\Gamma$ can have parabolic elements. In \cite{dp} Dal'bo and Peign\'e, generalising the Bowen-Series map to this setting, constructed a countable Markov partition and used it to code the geodesic flow with a suspension flow over a countable Markov partition. We stress, as they point out,  that it would also be possible to code this geodesic flow as suspension flow over a map with indifferent fixed points, very much in the spirit of what we study in sub-section \ref{ssec:mpflows}.  The coding constructed in \cite{dp} has been used, for example,  to study the  influence of cusps  on the homological
behaviour of closed geodesics \cite{bp1}. We would also like to point out that recent results by Sarig \cite{sa5} suggest that the techniques developed here could be applied to  a wide range of  non-uniformly hyperbolic flows obtained as suspensions of positive entropy surface diffeomorphisms.  Finally note that when coding a flow there is certain freedom in the choice of cross-section. In certain cases, choosing a different cross-section would yield a symbolic representation of the flow by a suspension flow over a countable Markov shift with a roof function bounded away from zero.  Since this could alter the base dynamics in non-trivial ways, in this paper we consider a fixed cross-section and work from there.

\subsection{Layout of the paper}
In Section~\ref{sec:prelim} we give the necessary definitions and results from the setting of countable Markov shifts; we also introduce suspension flows over countable Markov shifts and the notion of topological entropy for these flows. Section~\ref{sec:eq exist} begins with the definition of pressure for these flows, which has been introduced in \cite{sav,bi1,ke,jkl}. We then state and prove our first results: Theorem \ref{thm:flow eqstate} which characterises the potentials for which there exists an equilibrium measure, and Theorem~\ref{thm:uni} which gives uniqueness. Section~\ref{sec:ind} looks at inducing to a full shift and how the pressure for the induced potential can be related to the pressure for the original potential. In Section~\ref{sec:rec flow} we define the notions of recurrence and transience for suspension flows and relate these notions to the thermodynamics formalism for the shift map, proving a Ruelle-Perron-Frobenius-type result, Theorem~\ref{thm:flow rec}.   In the latter sections we focus on specific examples of suspension flows,
starting with the case of the renewal flow  in Section~\ref{sec:reflow}: in particular we consider the existence of equilibrium measures (including the existence of measures of maximal entropy) and phase transitions for the pressure function.  In this long section we highlight Examples~\ref{eg:better-worse} and \ref{eg:trans null} which give a complete picture of the possible pairs of behaviours for the base map and the flow.  Finally in Section~\ref{sec:MP flow} we apply our results to the setting where the base map $f$ is the non-uniformly expanding Manneville-Pomeau map and the roof function is $\log|f'|$.

\section{Preliminaries}
\label{sec:prelim}
In this section we collect all the definitions and results for countable Markov shifts and for suspension flows over countable Markov shifts that will be used in the latter sections.

\subsection{Recurrence, Transience and Thermodynamic formalism for countable Markov shifts.} \label{subsec:CMS}

Here we recall some results, mostly due to Sarig, that will be of use in the following sections. We note that Mauldin and Urba\'nski also developed a theory in this context \cite{mu1, mu, mubook}. However, the combinatorial restrictions they impose on the shifts are too strong for what we will require here.

Let $B$ be a transition matrix defined on the alphabet of natural numbers with zero. That is, the entries of the matrix
$B=B(i,j)_{\N_0  \times \N_0}$  are zeros and ones (with no row and no column
made entirely of zeros). The countable Markov shift $(\Sigma, \sigma)$
is the set
\[ \Sigma := \left\{ (x_n)_{n \in \N_0} : B(x_n, x_{n+1})=1  \text{ for every } n \in \N_0 \right\}, \]
together with the shift map $\sigma: \Sigma  \to \Sigma $ defined by
$\sigma(x_0, x_1, \dots)=(x_1, x_2,\dots)$.
We will always assume that this system is \emph{topologically mixing}, i.e., for each pair $x,y\in \N_0$, there exists $N \in \N$ such that for every $n > N$ there is a word $(i_0, \ldots, i_{n-1})\in \N_0^n$ such that $i_0=x, i_{n-1}=y$ and $B(i_k, i_{k+1})=1$ for all $0\le k\le n-2$.

Let $C_{i_0 \cdots i_{n-1}}$ denote the \emph{$n$-cylinder} consisting of all sequences $(x_0, x_1, \ldots)$ where $x_k=i_k$ for $0\le k\le n-1$.
The \emph{$n$-th variation} of $\phi:\Sigma \to \R$ is defined by
\[ V_{n}(\phi):= \sup \{| \phi(x)-  \phi(y)| : x,y \in \Sigma, x_{i}=y_{i}, 0 \leq i \leq n-1 \}. \]
We say that $ \phi$ is of \emph{summable variations} if $\sum_{n=1}^{\infty} V_n(\phi)< \infty$. We say that it is  \emph{locally H\"older} (with parameter $\theta$) if there exists $\theta \in (0,1)$ such that for all $n \geq 1$ we have  $V_{n}( \phi) \leq O( \theta^{n}). $
The first return time to $C_{i}$ is defined by
\begin{equation*} \label{eq:firstreturn}
 r_{i}(x) := \mathbbm{1}_{C_{i}}(x) \inf \{ n \geq 1 : \sigma^{n}x \in C_{i} \},
 \end{equation*}
 where $\mathbbm{1}_{C_{i}}$ is the indicator function of the cylinder $C_{i}$. Let
\[ X^i_n:= \{ x \in \Sigma : r_{i}(x)=n \}.\]
When it is clear what $i$ is, we will often drop the superscript.
Given $\phi$ a potential of summable variations and a 1-cylinder $C_i$, we define the partition functions
$$Z_n(\phi, C_i):=   \sum_{\sigma^{n}x=x}  \exp \left(\sum_{k=0}^{n-1} \phi(\sigma^{k}x)\right) \mathbbm{1}_{C_{i}}(x),$$
and
$$Z_n^*(\phi, C_i):= \sum_{\sigma^{n}x=x} \exp \left(\sum_{k=0}^{n-1}  \phi(\sigma^{k}x)\right) \mathbbm{1}_{X^i_n}(x). $$

The \emph{Gurevich Pressure} of $\phi$ was introduced by Sarig in \cite{Sar99}, generalising previous results by Gurevich \cite{gu1, gu2}.  It is defined by
\[ P_{\sigma}(\phi) = \lim_{n \rightarrow \infty} \frac{1}{n} \log Z_n(\phi, C_{i_0}).  \]
The limit always exists and if the system is topologically mixing then its value does not depend on the cylinder $C_{i_{0}}$ considered. The Gurevich pressure  is convex and  if
$\mathcal{K}:= \{ K \subset \Sigma : K \textrm{ compact and } \sigma\textrm{-invariant}, K \neq \emptyset \}$ then
\begin{equation*} \label{eq:press appr}
   P_{\sigma}(\phi) = \sup \{ P(\phi|K) : K \in \mathcal{K} \},
\end{equation*}
where $P(\phi| K)$ is the topological pressure of $\phi$ restricted to the compact set $K$ (for definition and properties see \cite[Chapter 9]{wa}).
Moreover, this notion of pressure satisfies the variational principle (see \cite{Sar99}):

\begin{teo}
Let $(\Sigma, \sigma)$ be a countable Markov shift and $\phi \colon \Sigma \to \R$ be a function of summable variations such that $\sup \phi < \infty$,  then
\[P_\sigma(\phi)= \sup \left\{ h_\sigma(\nu) + \int \phi~{\rm d} \nu : \nu \in \M_{\sigma} \text{ and } - \int \phi~{\rm d} \nu < \infty \right\},\]
where $ \M_{\sigma} $ denotes the set of $\sigma$-invariant probability measures and $h_\sigma(\nu)$ denotes the entropy of the measure $\nu$ (for a precise definition see  \cite[Chapter 4]{wa}).
\label{thm:CMS VP}
\end{teo}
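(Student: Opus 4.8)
Let me denote by $V$ the right-hand side, i.e.
\[ V := \sup\left\{ h_\sigma(\nu) + \int \phi~{\rm d}\nu : \nu\in\M_\sigma,\ -\int\phi~{\rm d}\nu<\infty\right\}. \]
Since $\sup\phi<\infty$, for any admissible $\nu$ the quantity $\int\phi~{\rm d}\nu$ is a well-defined real number and $\phi\in L^1(\nu)$. The plan is to prove the two inequalities $P_\sigma(\phi)\le V$ and $P_\sigma(\phi)\ge V$ separately, using as the main tool the approximation of the Gurevich pressure by topological pressures on compact invariant subsets displayed above, $P_\sigma(\phi)=\sup\{P(\phi|K):K\in\mathcal K\}$.

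The inequality $P_\sigma(\phi)\le V$ is the more direct one. For a finite sub-alphabet $\{0,\dots,N\}$ let $\Sigma_N\subset\Sigma$ be the subshift of all admissible sequences using only these symbols; each $\Sigma_N$ is compact and $\sigma$-invariant, and since the first coordinate is bounded on any compact $K$ and the coordinates of a point propagate under $\sigma$, every $K\in\mathcal K$ is contained in some $\Sigma_N$. As $\phi$ has summable variations it is continuous, hence bounded on the compact set $K$, so every $\sigma$-invariant measure $\mu$ supported on $K$ satisfies $-\int\phi~{\rm d}\mu<\infty$ and is therefore admissible in the supremum defining $V$. The classical variational principle on the compact system $(K,\sigma)$ (see \cite[Chapter 9]{wa}) gives $P(\phi|K)=\sup\{h_\sigma(\mu)+\int\phi~{\rm d}\mu\}\le V$, the supremum being over measures supported on $K$. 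Taking the supremum over $K\in\mathcal K$ and invoking the approximation formula yields $P_\sigma(\phi)\le V$.

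For the reverse inequality $P_\sigma(\phi)\ge V$ one must show that every admissible $\nu$ can be approximated, in the sense of the functional $h_\sigma(\cdot)+\int\phi~{\rm d}(\cdot)$, by measures carried on compact invariant subsystems. By the affinity of both entropy and the integral, together with the ergodic decomposition, it suffices to treat ergodic $\nu$. Fix $\epsilon>0$. Since $\phi\in L^1(\nu)$ and $\phi\le\sup\phi<\infty$, dominated convergence lets us choose $N$ so large that restricting attention to $\Sigma_N$ loses at most $\epsilon$ of $\int\phi~{\rm d}\nu$; simultaneously, by the inner approximation of entropy by finite partitions one arranges that the entropy carried by the symbols $\{0,\dots,N\}$ is within $\epsilon$ of $h_\sigma(\nu)$. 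The goal is then to produce a genuine $\sigma$-invariant measure $\mu$ supported on a compact subsystem with $h_\sigma(\mu)+\int\phi~{\rm d}\mu\ge h_\sigma(\nu)+\int\phi~{\rm d}\nu-C\epsilon$; once this is done, $P(\phi|\Sigma_N)\ge h_\sigma(\mu)+\int\phi~{\rm d}\mu$ and the approximation formula give $P_\sigma(\phi)\ge V-C\epsilon$, whence the claim on letting $\epsilon\to0$. The case $h_\sigma(\nu)=\infty$ is handled identically, choosing $N$ so that the finite-alphabet entropy is as large as desired.

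The main obstacle is precisely the construction in the previous paragraph: passing from $\nu$, which is typically fully supported and along which $\phi$ is unbounded below, to an invariant measure living on a finite sub-alphabet without leaking entropy or the value of the integral to infinity. The naive normalized restriction $\nu(\,\cdot\cap\Sigma_N)/\nu(\Sigma_N)$ is not $\sigma$-invariant, so the construction must instead exploit the Markov structure --- for instance by realizing the approximating value through $\sigma$-invariant (Markov) measures on $\Sigma_N$, or directly through periodic orbits inside a fixed cylinder $C_{i_0}$, where the summable-variation hypothesis controls the Birkhoff sums $\sum_{k=0}^{n-1}\phi(\sigma^k x)$ along periodic points and ties the estimate back to the partition functions $Z_n(\phi,C_{i_0})$ that define $P_\sigma(\phi)$. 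Controlling this approximation, and in particular ruling out an escape of mass that would inflate $-\int\phi~{\rm d}\mu$, is where the hypotheses $\sup\phi<\infty$ and summable variations do the real work; this is the heart of Sarig's argument in \cite{Sar99}.
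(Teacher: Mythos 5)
Your first inequality, $P_\sigma(\phi)\le V$, is correct and complete: every compact invariant $K$ involves only finitely many symbols, $\phi|_K$ is continuous hence bounded, so every invariant measure on $K$ is admissible in $V$, the classical variational principle on the compact system $(K,\sigma)$ gives $P(\phi|K)\le V$, and the approximation property $P_\sigma(\phi)=\sup_K P(\phi|K)$ finishes it. This is the same easy half that the paper itself invokes when proving its Theorem~\ref{thm:var-non-sup}.

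The reverse inequality, however, is a genuine gap: your proposal never proves that every admissible $\nu$ satisfies $h_\sigma(\nu)+\int\phi~{\rm d}\nu\le P_\sigma(\phi)$. You reduce to ergodic $\nu$, state the ``goal'' of producing a compactly supported invariant measure with almost the same free energy, correctly list the obstructions (the restriction of $\nu$ is not invariant, $\phi$ is unbounded below so nothing controls $\int\phi$ under approximation, entropy is not upper semicontinuous here), and then defer the construction to ``the heart of Sarig's argument.'' That construction \emph{is} the theorem; deferring it means only one of the two inequalities has been established. Moreover, the route you gesture at (inner approximation of an arbitrary measure by compactly supported ones, in free energy) is not how the hard direction is actually proved, and it is not clear it can be pushed through in this generality. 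Sarig's proof in \cite{Sar99} works with $\nu$ itself, using Shannon--McMillan--Breiman and counting estimates that compare $\nu$-typical orbits with the partition functions $Z_n(\phi,C_i)$. Alternatively --- and this is the strategy this paper uses to prove the stronger Theorem~\ref{thm:var-non-sup}, which even drops the hypothesis $\sup\phi<\infty$ --- one can bypass compact approximation entirely by inducing: normalize so that $P_\sigma(\phi)=0$, lift an ergodic admissible $\nu$ to $\overline\nu$ on the induced full shift over a cylinder of positive $\nu$-measure, observe that the induced potential $\overline\phi$ satisfies $V_1(\overline\phi)<\infty$, $V_n(\overline\phi)\to 0$ and, by Lemma~\ref{lem:finite}, $\sup\overline\phi<\infty$, apply the known full-shift variational principle (\cite[Theorem 2.1.8]{mubook}) together with $P(\overline\phi)\le 0$ to get $h_{\overline\sigma}(\overline\nu)+\int\overline\phi~{\rm d}\overline\nu\le 0$, and then push this down with the Abramov and Kac formulas to conclude $h_\sigma(\nu)+\int\phi~{\rm d}\nu\le 0=P_\sigma(\phi)$. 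Supplying either of these arguments is what is needed to close your gap.
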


The quantity $h_\sigma(\nu)+\int\phi~{\rm d}\nu$ is sometimes called the \emph{free energy} w.r.t. $\phi$, see \cite{Kel98} (although from a more physical perspective, the closer analogy is to minus the free energy).
A measure $\nu \in \M_{\sigma}$ attaining the infimum of the free energies, that is
\begin{equation}
P_\sigma(\phi)= h_\sigma(\nu) + \int\phi~{\rm d} \nu \label{eq:eq meas},
\end{equation}
 is called an \emph{equilibrium measure} for $\phi$. Buzzi and Sarig \cite{busa} proved that a potential of summable variations which is bounded above has at most one equilibrium measure.

Under certain combinatorial assumptions  on the shift (for example if $(\Sigma, \sigma)$ is a full-shift) the equilibrium measure also satisfies the Gibbs property.  We define this property here for general measures (i.e., not necessarily equilibrium measures).

\begin{defi}
We say that $\mu$ is a \emph{Gibbs measure} for $\phi$ if there exist $K,P\in \R$ such that for every $n\ge 1$, given an $n$-cylinder $C_{i_0 \cdots i_{n-1}}$,
$$\frac1K\le \frac{\mu(C_{i_0 \cdots i_{n-1}})}{e^{S_n\phi(x)-nP}}\le K$$
for any $x\in C_{i_0 \cdots i_{n-1}}$.
\end{defi}

Note that we will usually have $P=P_\sigma(\phi)$, for example in the full shift example mentioned above.

Potentials  can be classified according to their recurrence properties as follows (see \cite{Sar99,Sar01}); note that by topological mixing and summable variations, these definitions are independent of the choice of cylinder $C_i$.

\begin{defi} \label{def:rec pots}
Let $\phi$ be a potential of summable variations with finite Gurevich pressure $P_\sigma(\phi)= \log \lambda$. We say that $\phi$ is
\begin{itemize}
\item \emph{recurrent} if
\[ \sum_{n \geq 1} \lambda^{-n} Z_n(\phi, C_i)= \infty,\]
\item
and \emph{transient} otherwise.
\end{itemize}
Moreover, we  say that $\phi$ is
\begin{itemize}
\item[-] \emph{positive recurrent} if it is recurrent and
\[ \sum_{n \geq 1} n\lambda^{-n} Z_n^*(\phi, C_i) <\infty;\]
\item[-] \emph{null recurrent} if it is recurrent and
\[ \sum_{n \geq 1} n\lambda^{-n} Z_n^*(\phi, C_i) =\infty.\]
\end{itemize}

\end{defi}

Consider the Ruelle operator defined formally in some space of functions by:
\begin{equation*}
L_{\phi}g(x):= \sum_{\sigma y=x} \exp( \phi (y)) g(y).
\end{equation*}
Note that if $\|L_{\phi}1 \|_{\infty} < \infty$ then the transfer operator is a bounded operator on the Banach space $C_B:=\left\{g \in C(\Sigma) : \|g\|_{\infty}< \infty  \right\}$, where $C(\Sigma)$ is the space of continuous functions $g:\Sigma \to\R$ (see \cite{Sar99,Sar01a}).

A measure $m$ on $\Sigma$ is called \emph{$\phi$-conformal} if $m(\sigma(A))=\int_A e^{-\phi}\rm{d}m$ whenever $A$ is measurable and $\sigma$ is injective on $A$.  Moreover, a measure $\nu$ on $\Sigma$ is called \emph{conservative} if any measurable set $W\subset \Sigma$ such that  the sets $\{f^{-n}(W)\}_{n=0}^{\infty}$ are disjoint has $\nu(W)=0$.

Sarig \cite{Sar01} generalises the Ruelle Perron Frobenius Theorem to countable Markov shifts.
\begin{teo}[RPF Theorem] \label{thm:RPF}
Let $(\Sigma, \sigma)$ be a countable Markov shift and $\phi$ a potential of summable variations  of finite Gurevich pressure $\log \lambda$.   If $\phi$ is
\begin{enumerate}[label=({\alph*}),  itemsep=0.0mm, topsep=0.0mm, leftmargin=7mm]
\item positive recurrent then there exists a conservative $\phi$-conformal measure $m$ and a continuous function $h$ such that $L_{\phi}^{*} m= \lambda m$,
$L_{ \phi} h= \lambda h$ and  $ h~{\rm d} m < \infty$;
\item null recurrent then there exists a conservative $\phi$-conformal measure $m$ and a continuous function $h$ such that  $L_{\phi}^{*} m= \lambda m$,
$L_{\phi} h= \lambda h$ and $\int h~{\rm d} m = \infty$;
\item transient then there is no conservative $\phi$-conformal measure.
\end{enumerate}
\end{teo}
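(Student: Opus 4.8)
The plan is to reduce the whole statement to the combinatorics of first returns to a fixed symbol, via operator renewal theory, following Sarig. After replacing $\phi$ by $\phi-\log\lambda$ I may assume $P_\sigma(\phi)=0$, so $\lambda=1$, and I fix a symbol $a\in\N_0$, writing $[a]=C_a$. The first step is the scalar dichotomy. Introduce the generating functions $\widehat Z(z):=\sum_{n\ge1}Z_n(\phi,[a])\,z^n$ and $\widehat Z^*(z):=\sum_{n\ge1}Z_n^*(\phi,[a])\,z^n$; by the definition of the Gurevich pressure the radius of convergence of $\widehat Z$ is $1$. Decomposing each periodic orbit meeting $[a]$ into a concatenation of first-return loops and using additivity of Birkhoff sums along the orbit yields the renewal relation $\widehat Z(z)=\widehat Z^*(z)/(1-\widehat Z^*(z))$ on $(0,1)$. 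Since $\widehat Z$ blows up as $z\uparrow1$, one reads off that $\phi$ is recurrent iff $\widehat Z^*(1)=\sum_n Z_n^*=1$ and transient iff $\widehat Z^*(1)<1$; differentiating, positive recurrence corresponds to finite mean return time $\widehat Z^{*\prime}(1)=\sum_n nZ_n^*<\infty$ and null recurrence to $\sum_n nZ_n^*=\infty$.

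Next I would upgrade this to the operator level. Let $L_{\bar\phi,n}$ be the part of $L_\phi^{\,n}$ recording first returns to $[a]$, so that $L_{\bar\phi}:=\sum_{n\ge1}L_{\bar\phi,n}$ is the transfer operator of the \emph{induced} system on $[a]$, which is topologically a full shift over the countable alphabet of first-return words, with induced potential $\bar\phi=\sum_{k=0}^{r_a-1}\phi\circ\sigma^{k}$ on $[a]$. Recurrence (i.e.\ $\widehat Z^*(1)=1$) is precisely the statement that $\bar\phi$ has pressure $0$, that is, that $L_{\bar\phi}$ has leading eigenvalue $1$. Because the induced shift is a full (hence BIP) shift, the RPF machinery for full shifts applies and produces a conformal probability measure $\bar m$ on $[a]$ and a positive continuous eigenfunction $\bar h$ with $L_{\bar\phi}^{*}\bar m=\bar m$ and $L_{\bar\phi}\bar h=\bar h$. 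This is where the recurrence hypothesis is consumed and where cases (a) and (b) are handled on the base.

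I would then spread $\bar m$ and $\bar h$ over all of $\Sigma$ by propagating them through the excursions away from $[a]$. The eigenfunction is obtained as a Green's-function-type sum $h(x)=\sum_{n\ge0}(\widetilde L_n\bar h)(x)$, where $\widetilde L_n$ records the weighted first passages from $[a]$ out to $x$ in $n$ steps with no intermediate return to $[a]$, and the conformal measure $m$ is obtained by pulling $\bar m$ back conformally along these same excursions; one checks directly that $L_\phi^{*}m=m$, $L_\phi h=h$, and that $m$ is conservative precisely because the total return mass $\widehat Z^*(1)$ equals $1$ (Poincar\'e recurrence for the induced full shift). The $\sigma$-invariant measure ${\rm d}\mu:=h\,{\rm d}m$ is finite iff $\int h\,{\rm d}m<\infty$, and a Kac-type computation identifies $\int h\,{\rm d}m$, up to a positive constant, with the mean return time $\sum_n nZ_n^*$; this gives $\int h\,{\rm d}m<\infty$ in the positive recurrent case (a) and $\int h\,{\rm d}m=\infty$ in the null recurrent case (b). For the transient case (c), $\widehat Z^*(1)<1$ forces the expected number of returns to $[a]$, namely $\widehat Z(1)=\widehat Z^*(1)/(1-\widehat Z^*(1))$, to be finite; any putative conservative $\phi$-conformal measure would then contradict the recurrence forced by conservativity, so no such measure exists.

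The main obstacle is the passage from the scalar renewal dichotomy to its operator-valued counterpart and the control of the spreading construction on a non-compact space: one must establish convergence of the operator series defining $h$, continuity and strict positivity of $h$, and $\sigma$-finiteness together with conservativity of $m$, all in the absence of a spectral gap. This is most delicate in the null recurrent case (b), where the eigenvalue $1$ of $L_{\bar\phi}$ is not isolated in the spectrum and the naive transfer-operator iteration fails to converge. Making the renewal relation rigorous at the operator level, and extracting from it the sharp finite/infinite alternative for $\int h\,{\rm d}m$, is the heart of the argument.
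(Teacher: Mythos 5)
Before anything else, note that the paper contains no proof of this statement to compare against: Theorem~\ref{thm:RPF} is background material, quoted from Sarig's work on null recurrent potentials (reference \cite{Sar01}), and the paper simply cites it. So your proposal can only be measured against Sarig's actual argument. Its broad strategy --- analysing first returns to a fixed cylinder by renewal theory in the style of Vere-Jones, inducing, and then spreading a conformal measure and eigenfunction over the excursions --- is indeed the right skeleton.

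However, there is a genuine gap at the very first step, where you claim the exact identity $\widehat Z(z)=\widehat Z^*(z)/(1-\widehat Z^*(z))$ and ``read off'' that recurrence is equivalent to $\widehat Z^*(1)=1$. That identity is exact only when $\phi$ is constant on $1$-cylinders. For a general potential of summable variations, replacing the Birkhoff sum of a period-$n$ point through $C_a$ by the sum of the Birkhoff sums of the periodic points associated to its $m$ first-return loops costs an error of up to $C=\sum_{k\ge 2}V_k(\phi)$ \emph{per loop}, hence a multiplicative distortion of order $e^{Cm}$, and $m$ can be comparable to $n$. What one actually gets are coefficient-wise bounds
\[
\frac{e^{-C}\,\widehat Z^*(z)}{1-e^{-C}\,\widehat Z^*(z)}
\ \preceq\ \widehat Z(z)\ \preceq\
\frac{e^{C}\,\widehat Z^*(z)}{1-e^{C}\,\widehat Z^*(z)},
\]
which leave a window of values of $\widehat Z^*(1)$ (roughly between $e^{-C}$ and $e^{C}$) where divergence of $\widehat Z(1)$ can be neither confirmed nor excluded; so the clean scalar dichotomy does not follow. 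Sarig's proof is built precisely to avoid this: the first-return/first-entrance decompositions are kept at the level of the transfer operator, where identities such as $\mathbbm{1}_{C_a}L_\phi^n(\mathbbm{1}_{C_a}f)=\sum_{k=1}^{n}\bigl(\text{first-return part of }L_\phi^{k}\bigr)L_\phi^{n-k}(\mathbbm{1}_{C_a}f)$ are \emph{exact}, and summable variations is used only to compare values of $L_\phi^n\mathbbm{1}_{C_a}$ at different points of the same cylinder, which costs a single distortion constant independent of the number of excursions.

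A second, related gap is your appeal to ``RPF machinery for full shifts'' for the induced potential $\overline\phi$. Inducing a summable-variations potential only guarantees $V_1(\overline\phi)<\infty$ and $V_n(\overline\phi)\to 0$ (as noted in Section~\ref{ssec:induce discr}); it does not give summable variations, let alone local H\"older continuity, of $\overline\phi$, which is what the Gibbs/RPF theory on full (BIP) shifts requires. This is exactly why the paper, when it later works with the renewal shift, imposes local H\"older continuity of $\overline\phi$ as a separate hypothesis in its class $\mathcal{R}$; Sarig's operator-renewal route never needs to induce in this way. Finally, you yourself defer the convergence of the series defining $h$, the conservativity of $m$, and the finite/infinite alternative for $\int h\,{\rm d}m$ --- that is, essentially the whole null recurrent case, which is the hard part of the theorem. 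As it stands, the proposal is a correct plan of attack whose decisive steps are either invalid as stated (the scalar renewal shortcut) or explicitly postponed.
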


In the recurrent case, we call the measure $\nu =h {\rm d} m$ the \emph{Ruelle-Perron-Frobenius (RPF)} measure of $\phi$. In the case where $\phi$ is positive recurrent, the total measure of the space is finite, and we rescale to make $\nu$ a probability measure.  This is an equilibrium measure for $\phi$ provided $-\int  \phi~{\rm d} \nu< \infty$.
Daon proved in \cite[Proposition 4.2]{da} that the RPF-measure is unique.

\begin{prop} \label{prop:uniq-rpf}
Let $(\Sigma, \sigma)$ be a topologically mixing countable Markov shift and $\phi$ a potential of summable variations then there is at most one RPF-measure for $\phi$.
\end{prop}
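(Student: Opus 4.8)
The plan is to show that any two RPF-measures are proportional; since in the positive recurrent case one rescales to a probability measure, this yields the stated uniqueness. Throughout I normalise the potential by replacing $\phi$ with $\phi-\log\lambda$, so that $\lambda=1$, $L_\phi^* m=m$ and $L_\phi h=h$; recall that $\nu=h\,{\rm d}m$ is $\sigma$-invariant and, since $m$ is conservative and $h>0$, conservative as well. The problem then splits into two independent uniqueness statements: (i) the conservative $\phi$-conformal measure $m$ is unique up to a positive scalar, and (ii) the eigenfunction $h$ solving $L_\phi h=h$ is unique up to a positive scalar. Granting both, if $\nu_1=h_1\,{\rm d}m_1$ and $\nu_2=h_2\,{\rm d}m_2$ are two RPF-measures then $m_1=a\,m_2$ and $h_1=b\,h_2$, whence $\nu_1=ab\,\nu_2$, and equality of the normalisations forces $\nu_1=\nu_2$.

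For (i) the key point is \emph{ergodicity}: a conservative $\phi$-conformal measure on a topologically mixing countable Markov shift is ergodic. I would obtain this by inducing on a fixed cylinder $C_a$, exactly as in Section~\ref{sec:ind}. The first return map $\sigma_{C_a}$ is conjugate to a full shift on the countable alphabet of first-return words, and in the recurrent case the induced potential $S_{r_a}\phi$ has Gurevich pressure $0$ on this full shift; on a full shift the RPF/Gibbs measure is genuinely Gibbs and hence ergodic (indeed exact), while conservativity guarantees that $m$-almost every point returns to $C_a$ infinitely often, so ergodicity lifts from $C_a$ to all of $\Sigma$. Once ergodicity is in hand, two conservative $\phi$-conformal measures $m_1,m_2$ are quasi-invariant with the same Jacobian, so a Hopf ratio argument gives $m_1\sim m_2$ and the density ${\rm d}m_1/{\rm d}m_2$ is $\sigma$-invariant, hence $m$-a.e.\ constant; this is (i).

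For (ii) fix the conformal measure $m$ and pass to the normalised transfer operator $\widehat{L}g:=h^{-1}L_\phi(gh)$, which is precisely the Perron--Frobenius operator of $\sigma$ with respect to $\nu=h\,{\rm d}m$. If $h'$ is another eigenfunction, then $g:=h'/h$ satisfies $\widehat{L}g=g$. For a conservative system a nonnegative function fixed by the transfer operator must be $\sigma$-invariant, $g=g\circ\sigma$ holding $\nu$-a.e.\ (the standard equality case in the defining inequality for conservative maps), and by the ergodicity established above $g$ is constant; this gives $h'=b\,h$ and completes (ii).

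The main obstacle is establishing ergodicity of the conservative conformal measure under only topological mixing, since $m$ is merely $\sigma$-finite (and genuinely infinite in the null recurrent case) and $h$ need not be bounded, so the usual functional-analytic route on a compact space is unavailable. Inducing to a cylinder, where the return map is a full shift carrying an honest Gibbs measure, is what makes the argument work; but one must check carefully that the return-word decomposition reconstructs $m$ and $\nu$ on all of $\Sigma$ without loss of mass, and this is exactly where conservativity (every orbit returns infinitely often, $m$-a.e.) does the essential work.
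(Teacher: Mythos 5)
For context: the paper does not prove Proposition~\ref{prop:uniq-rpf} at all; it simply quotes Daon \cite[Proposition 4.2]{da}. Your proposal is therefore measured against the standard argument, and its architecture --- ergodicity of a conservative $\phi$-conformal measure via inducing, then uniqueness of $m$ and of $h$ separately --- is the right one. The genuine gap sits exactly at the step you yourself flag as doing ``the essential work''. You claim that conservativity guarantees that $m$-almost every point of $\Sigma$ returns to $C_a$ infinitely often. Conservativity (Halmos's recurrence theorem) gives returns only for almost every point \emph{of} $C_a$; it says nothing about points outside $C_a$ ever reaching $C_a$. The set $W=\{x\in\Sigma : x_n\neq a\text{ for all }n\ge 0\}$ is forward invariant, and a conservative measure can charge it: on the full shift over $\N$, the sum of an invariant probability measure supported on the subshift over the alphabet $\N\setminus\{a\}$ and any fully supported conservative measure is again conservative, yet a positive-measure set of its points never visits $C_a$. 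So conservativity alone cannot do this job; and if $m(W)>0$, your induced Gibbs measure controls $m$ only on $\bigcup_{n\ge 0}\sigma^{-n}C_a$, ergodicity on $\Sigma$ does not follow, and both your steps (i) and (ii) collapse with it. What closes the gap is conformality together with topological mixing: if $m(W)>0$, pick $j\neq a$ with $m(W\cap C_j)>0$, a word $u$ with $auj$ admissible, and map $W\cap C_j$ into $C_a$ by the inverse branch $y\mapsto auy$. Conformality gives this image positive $m$-measure, and its points visit $C_a$ at most finitely many times, contradicting Halmos recurrence applied to the positive-measure set $C_a$. This pull-back (or, alternatively, a density-point argument for invariant sets, which avoids fixing a cylinder altogether) is the missing idea.

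Two further steps are asserted with incorrect justification, although the underlying facts are classical, so the proof is repairable rather than wrong in conception. For (i): two measures with the same prescribed Jacobian $e^{-\phi}$ can perfectly well be mutually singular (distinct conformal measures for transient potentials typically are), so ``a Hopf ratio argument gives $m_1\sim m_2$'' assumes exactly what needs proof; the ratio theorem applied to each $m_i$ separately produces statements on disjoint sets when $m_1\perp m_2$ and yields no contradiction. The standard repair is the sum trick: $m_1+m_2$ is again conservative and $\phi$-conformal, hence ergodic by the corrected first part, and each density $dm_i/d(m_1+m_2)$ is a $\sigma$-invariant function (that computation is valid for conformal measures sharing a Jacobian), hence constant. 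For (ii): a nonnegative fixed point of $\widehat{L}$ is \emph{not} in general $\sigma$-invariant --- the identity $\widehat{L}g=g$ says precisely that $g\,{\rm d}\nu$ is a $\sigma$-invariant measure, which is weaker. Constancy of $g$ is the uniqueness, up to scale, of invariant densities of a conservative ergodic system; it follows from the Hurewicz ratio ergodic theorem applied to the two invariant measures $h'\,{\rm d}m\ll h\,{\rm d}m$, not from an ``equality case'' in the definition of conservativity, and since $g=h'/h$ need not be integrable one must either truncate (note $\widehat{L}(g\wedge c)\le g\wedge c$, and conservativity upgrades this to equality) or work with the ratio theorem directly. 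With these three repairs your argument does deliver the proposition, by essentially the route the literature takes.
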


The first parts of the following result  were  established in  \cite[Theorem 2]{Sar01} and provides a version of the variational principle for infinite invariant measures. The final part of the second statement is pointed out in \cite[Remark 4]{Sar01}.

\begin{teo}\label{thm:infiniteRPF}
Let $(\Sigma, \sigma)$ be a countable Markov shift and $\phi \colon \Sigma \to \R$ be a recurrent locally H\"older potential of   finite Gurevich pressure.
\begin{enumerate}[label=({\alph*}),  itemsep=0.0mm, topsep=0.0mm, leftmargin=7mm]
\item For every conservative ergodic invariant measure $\nu$ which is finite on cylinders, if $\int (P_\sigma(\phi) -\phi) ~{\rm d}\nu<\infty$ then $h_\sigma(\nu) \leq  \int (P_\sigma(\phi) -\phi) ~{\rm d}\nu$.
\item Let $h$ and $m$ be the density and the conformal measure provided by the Ruelle Perron Frobenius Theorem and $\nu=hm$. If $ \int (P_\sigma(\phi) -\phi)~{\rm d}\nu< \infty$ then
$h_\sigma(\nu) =  \int (P_\sigma(\phi) -\phi) ~{\rm d}\nu$. Moreover, $\nu$ is the unique measure (up to a constant) such that $h_\sigma(\nu) =  \int (P_\sigma(\phi) -\phi) ~{\rm d}\nu$ and $\nu$ has a generator of finite entropy.
\end{enumerate}
\end{teo}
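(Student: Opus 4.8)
The plan is to reduce this infinite-measure statement to the finite-measure Variational Principle (Theorem~\ref{thm:CMS VP}) by \emph{inducing} on a cylinder, after using the Ruelle--Perron--Frobenius data to normalise the potential. Write $\lambda=e^{P_\sigma(\phi)}$ and let $h,m$ be as in Theorem~\ref{thm:RPF}, so that $L_\phi h=\lambda h$, $L_\phi^* m=\lambda m$ and $\nu=hm$. First I would pass to the cohomologous potential
\[ \tilde\phi:=\phi+\log h-\log(h\circ\sigma)-\log\lambda, \]
which by the eigenfunction equation satisfies $\sum_{\sigma y=x}e^{\tilde\phi(y)}=1$; in particular $L_{\tilde\phi}\mathbf{1}=\mathbf{1}$, so $\tilde\phi\le 0$, the measure $\nu$ is $\sigma$-invariant and $\tilde\phi$-conformal, and $-\tilde\phi=(P_\sigma(\phi)-\phi)+(\log h\circ\sigma-\log h)$.

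Next, fix a cylinder $C=C_i$ with $0<\nu(C)<\infty$; such a cylinder exists because $\nu$ is finite and nonzero on cylinders, and conservativity guarantees that the first-return map $\sigma_C$ is defined $\nu$-a.e. on $C$. The induced system $(C,\sigma_C)$ is a full shift on the countable alphabet of first-return words, and the induced potential $\Phi:=\sum_{k=0}^{r_i-1}\tilde\phi\circ\sigma^k$ is again normalised for the induced transfer operator; hence $\Phi\le 0$, it has summable variations (by bounded distortion), and $P_{\sigma_C}(\Phi)=0$. The normalised induced measure $\bar\nu_C:=\nu|_C/\nu(C)$ is the $\sigma_C$-invariant Gibbs measure of $\Phi$. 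Since $\Phi$ is bounded above, Theorem~\ref{thm:CMS VP} applies on $(C,\sigma_C)$: every $\sigma_C$-invariant probability $\rho$ with $-\int\Phi\,{\rm d}\rho<\infty$ satisfies $h_{\sigma_C}(\rho)\le-\int\Phi\,{\rm d}\rho$, with equality for $\rho=\bar\nu_C$.

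The final step is to transfer this back to $\Sigma$ via the Abramov and Kac formulas. For the Krengel entropy one has $h_\sigma(\nu)=\nu(C)\,h_{\sigma_C}(\bar\nu_C)$, while the integral formula gives $\int_\Sigma g\,{\rm d}\nu=\nu(C)\int_C \big(\sum_{k=0}^{r_i-1}g\circ\sigma^k\big)\,{\rm d}\bar\nu_C$. Applying the latter to $g=P_\sigma(\phi)-\phi$ and observing that $\sum_{k=0}^{r_i-1}(\log h\circ\sigma-\log h)=\log h\circ\sigma_C-\log h$ telescopes, the coboundary integrates to zero against the $\sigma_C$-invariant measure $\bar\nu_C$: here $h$ has uniformly bounded oscillation on $C$ by the RPF construction, so $\log h$ is bounded on $C$ and in particular lies in $L^1(\bar\nu_C)$, which is exactly what lets the $\log h$ terms cancel. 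Hence $-\nu(C)\int_C\Phi\,{\rm d}\bar\nu_C=\int_\Sigma(P_\sigma(\phi)-\phi)\,{\rm d}\nu$, and combining with the induced (in)equality yields statement (a) for an arbitrary conservative ergodic $\nu$ finite on cylinders, and the equality of statement (b) for $\nu=hm$.

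For the uniqueness clause, if $\mu$ is conservative ergodic with a generator of finite entropy and attains $h_\sigma(\mu)=\int(P_\sigma(\phi)-\phi)\,{\rm d}\mu<\infty$, then inducing on $C$ (with $0<\mu(C)<\infty$) the measure $\bar\mu_C$ attains equality in the induced Variational Principle, so it is an equilibrium state for $\Phi$ on the full shift; by uniqueness of the Gibbs/equilibrium measure there (equivalently, via uniqueness of the RPF measure, Proposition~\ref{prop:uniq-rpf}) we get $\bar\mu_C=\bar\nu_C$, and the inducing scheme forces $\mu=c\,\nu$. The main obstacle throughout is the passage between the infinite system and the finite induced one: justifying the infinite-measure Abramov and Kac formulas and, crucially, cancelling the \emph{a priori} unbounded coboundary $\log h$ (an $\infty-\infty$ issue). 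The hypothesis $\int(P_\sigma(\phi)-\phi)\,{\rm d}\nu<\infty$ and the bounded oscillation of $h$ on cylinders are precisely what tame these, keeping $-\int\Phi\,{\rm d}\bar\nu_C$ finite and the telescoped boundary term integrable.
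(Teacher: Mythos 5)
First, a structural point: the paper offers no proof of this theorem at all --- it is imported from Sarig, with part (a) and the equality in (b) cited to \cite[Theorem 2]{Sar01} and the uniqueness clause to \cite[Remark 4]{Sar01}. So your proposal is not competing with an in-paper argument but with Sarig's original one, and your route --- normalise by the RPF data so that $L_{\tilde\phi}\mathbbm{1}=\mathbbm{1}$, induce on a cylinder to obtain a full shift, run the classical variational principle and Gibbs theory there, then pull back with Abramov and Kac while cancelling the $\log h$ coboundary --- is essentially the standard mechanism behind the cited result; you correctly isolate the two genuine technical issues (integrability of $-\Phi$ and the a priori $\infty-\infty$ coboundary, tamed by boundedness of $\log h$ on a single cylinder, which indeed follows from the distortion estimates in Sarig's construction of $h$). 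What your sketch buys is self-containedness; what the citation buys is that the hard lemmas you assert rather than prove are deferred to the literature, and those assertions are exactly where the hypotheses act, so you should flag them: (i) exact normalisation of the induced potential, $\sum_{\sigma_C y=x}e^{\Phi(y)}=1$, hence $P_{\sigma_C}(\Phi)=0$ and the Gibbs property of $\bar\nu_C$, is \emph{not} a formal consequence of $L_{\tilde\phi}\mathbbm{1}=\mathbbm{1}$; it is equivalent to recurrence/conservativity (for part (a) you only need the automatic sub-normalisation $L_{\Phi}\mathbbm{1}\le\mathbbm{1}$, giving $P_{\sigma_C}(\Phi)\le 0$, and the exact normalisation, via conservativity of $hm$, is only needed for part (b)); (ii) ``Gibbs plus $-\int\Phi\,{\rm d}\bar\nu_C<\infty$ implies equilibrium'' requires the finite-entropy-of-the-cylinder-partition argument, e.g.\ \cite[Theorem 2.2.9]{mubook}; (iii) the identities $h_\sigma(\nu)=\nu(C)\,h_{\sigma_C}(\bar\nu_C)$ and Kac's formula are statements about Krengel entropy of conservative \emph{ergodic} infinite measures, so you need ergodicity of $hm$ (it follows from ergodicity of the induced Gibbs measure), and this is also the only place where the ``generator of finite entropy'' hypothesis of the uniqueness clause can do any work --- your sketch invokes that hypothesis but never actually uses it. None of these is a fatal gap, since each is a citable fact, but they are the load-bearing steps of the argument rather than routine bookkeeping.
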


\begin{rem} \label{rem:rpf-uni}
Observe that in the case that the RPF measure is finite, the final part of the second statement above is a more standard statement of uniqueness of equilibrium state for $\phi$.  The subtlety here is that it also gives a form of uniqueness for infinite measures.
\end{rem}

\subsection{Inducing and the variational principle}
\label{ssec:induce discr}
In this subsection we study the inducing procedure in the context of countable Markov shifts as well as to prove the variational principle without the $\sup \phi < \infty$ assumption considered in Theorem \ref{thm:CMS VP}.

The combinatorial structure of a countable Markov shift has several important consequences in the properties of its corresponding thermodynamic formalism. For example, if $(\Sigma, \sigma)$ is a full-shift then locally H\"older potentials $\phi$ of finite entropy have corresponding Gibbs measures and the pressure function $t \mapsto P(t \phi)$ (when finite) is real analytic (see \cite{Sar01a}). Moreover, if $\Sigma$ does not satisfy a certain combinatorial assumption (the so called BIP property, see \cite{Sar03} for a precise definition) then locally H\"older potentials do not have corresponding Gibbs measures \cite[Theorem 1]{Sar03}. The inducing procedure in the context of topologically mixing countable Markov shifts consists of associating to any system $(\Sigma, \sigma)$ a full-shift on a countable alphabet. The idea being to solve problems in this new (better behaved) system and then to translate them back into the original system.

Let $(\Sigma, \sigma)$ be a topologically mixing countable Markov shift.  Fix a symbol in the alphabet, say $a \in \N$. The \emph{induced system} over the cylinder $C_a$, denoted by $(\overline{\Sigma}, \overline{\sigma})$,  is the full-shift defined on the alphabet
\begin{equation*}
\left\{ C_{ai_1 \dots i_m} : i_j \neq a \text{ and }  C_{ai_1 \dots i_ma} \neq \emptyset \right\}.
\end{equation*}
As defined in Section \ref{subsec:CMS} let $r_a(x)$ be  the \emph{first return time} to the cylinder $C_a$. For every
potential $\phi:\Sigma \to \R$ we define the induced potential by
\begin{equation}
\overline{\phi}:=\left( \sum_{k=0}^{r_a(x)-1} \phi \circ \sigma^k \right).
\label{eq:overline phi}
\end{equation}
Note that  if $\phi$ is  locally H\"older then so is $\overline{\phi}$ and that if $\phi$ has summable variations then $V_1(\overline{\phi})<\infty$ and $\lim_{n\to\infty}V_n(\phi)=0$.

There is a relation between invariant measures for the induced system $\overline{\mu}$ and measures on the original system. Indeed, an invariant probability  measure $\overline{\mu}$ on the induced systems such that $\int r_a ~{\rm d} \overline{\mu} < \infty$ can be projected onto an invariant probability measure $\mu$ on the original system in the following way
\begin{equation} \label{eq:proj meas}
\mu(A)=\frac1{\int r_a~{\rm d}\overline\mu}\sum_{n=1}^{\infty}\sum_{k=0}^{n-1}\overline\mu(\sigma^{-k}(A)\cap X_n^a),
\end{equation}
where $X_n^a:= \{x \in \Sigma : r_a(x)=n \}$. We say that $\overline{\mu}$ is the \emph{lift} of $\mu$ and that $\mu$ is the \emph{projection} of $\overline{\mu}$.
We can relate the integral of a potential with respect to a measure and the integral of the induced potential with respect to the lifted measure. We also have that the entropy of a measure is related to that of the lifted measure.

\begin{rem} \label{rmk:Kac and Ab}
Let $\nu \in \M_{\sigma}$ be ergodic and satisfy that $\nu(C_a)>0$. Denote by $\overline{\nu} \in \M_{\overline{\sigma}}$ its lift to the induced system. Let $\phi:\Sigma \to \R$ be a potential of summable variations and let $\overline{\phi}$ be the corresponding induced potential. If $\int r_a ~{\rm d} \overline{\nu} < \infty$ then
\[ \int \phi~{\rm d} \nu = \frac{\int \overline{\phi} ~{\rm d} \overline{\nu}}{\int r_a ~{\rm d} \overline{\nu}}. \]
This is known as Kac's Formula/Lemma. Similarly for entropy, Abramov's Formula (see \cite{ab2}) gives an analogous statement for entropy:
\[ h_\sigma(\nu) = \frac{h_{\overline\sigma}(\overline{\nu})}{\int r_a ~{\rm d} \overline{\nu}}. \]
\end{rem}

Making use of the inducing scheme we will prove the variational principle without the assumption of the potential being bounded above. We use the following simple lemma in the proof.

\begin{lema}\label{lem:finite}
Let $(\Sigma_f, \sigma)$ be the full-shift on a countable alphabet and $\overline{\phi}:\Sigma_f \to \R$ a  potential of finite pressure such that $V_1(\overline{\phi})<\infty$ and $\lim_{n\to\infty}V_n(\overline{\phi})=0$ then $\sup \overline{\phi} < \infty$.
\end{lema}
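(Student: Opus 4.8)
The plan is to bound the Gurevich pressure from below by $\log\sum_i e^{\inf_{C_i}\overline{\phi}}$ and then read off the conclusion: finiteness of the pressure forces this series to converge, hence forces $\inf_{C_i}\overline{\phi}$ to be bounded above uniformly in $i$, and the hypothesis $V_1(\overline{\phi})<\infty$ upgrades a uniform bound on the infima over $1$-cylinders into a uniform bound on $\overline\phi$ itself. To get the pressure estimate I would first reduce to finite subsystems. For each $N$ let $\Sigma_N\subset\Sigma_f$ be the full shift on the finite alphabet $\{1,\dots,N\}$; this is a nonempty compact $\sigma$-invariant set, so the compact-exhaustion formula $P_\sigma(\overline\phi)=\sup\{P(\overline\phi|K):K\in\mathcal{K}\}$ recalled above gives $P_\sigma(\overline\phi)\ge P(\overline\phi|_{\Sigma_N})$. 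Since $\lim_n V_n(\overline\phi)=0$, the potential $\overline\phi$ is uniformly continuous, so on the compact space $\Sigma_N$ the classical variational principle \cite[Chapter 9]{wa} applies and yields $P(\overline\phi|_{\Sigma_N})\ge h_\mu(\sigma)+\int\overline\phi\,{\rm d}\mu$ for every $\mu\in\M_\sigma$ supported on $\Sigma_N$.

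Next I would feed in Bernoulli measures. Each $\inf_{C_i}\overline\phi$ is finite, because $\overline\phi$ is real-valued and $V_1(\overline\phi)<\infty$ bounds its oscillation on $C_i$. For a probability vector $(p_1,\dots,p_N)$, the Bernoulli measure $\mu_p$ on $\Sigma_N$ has entropy $-\sum_i p_i\log p_i$ and satisfies $\int\overline\phi\,{\rm d}\mu_p\ge\sum_i p_i\inf_{C_i}\overline\phi$. A one-line Lagrange multiplier computation shows that $-\sum_i p_i\log p_i+\sum_i p_i\inf_{C_i}\overline\phi$ is maximised by $p_i\propto e^{\inf_{C_i}\overline\phi}$, with maximal value $\log\sum_{i=1}^N e^{\inf_{C_i}\overline\phi}$. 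Combining this with the previous step and letting $N\to\infty$ gives $P_\sigma(\overline\phi)\ge\log\sum_{i=1}^\infty e^{\inf_{C_i}\overline\phi}$. Since $P_\sigma(\overline\phi)<\infty$ by hypothesis, the series $\sum_i e^{\inf_{C_i}\overline\phi}$ converges, so there is $M<\infty$ with $\inf_{C_i}\overline\phi\le M$ for all $i$. As $V_1(\overline\phi)<\infty$ forces $\sup_{C_i}\overline\phi\le\inf_{C_i}\overline\phi+V_1(\overline\phi)$ on each $1$-cylinder, we conclude $\sup\overline\phi\le M+V_1(\overline\phi)<\infty$.

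I expect the only real subtlety to be the choice of route to the pressure estimate. A partition-function approach also works in outline: for the full shift one can lower bound $Z_2(\overline\phi,C_a)\ge e^{\inf_{C_a}\overline\phi}\sum_b e^{\inf_{C_b}\overline\phi}$ using the period-$2$ orbits, and if $\sup\overline\phi=\infty$ this forces $Z_2=\infty$. However, passing from $Z_2=\infty$ to $P_\sigma=\infty$ cleanly requires superadditivity of $\log Z_n$ up to an additive constant, which relies on summable variations to control Birkhoff sums along concatenated periodic orbits—an assumption we are \emph{not} given here, since the hypotheses only provide $V_1(\overline\phi)<\infty$ and $V_n(\overline\phi)\to 0$. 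The variational/Bernoulli route above sidesteps this difficulty entirely, invoking $V_1<\infty$ only for the cylinder oscillation bound and $V_n\to 0$ only to guarantee continuity on the finite subsystems $\Sigma_N$.
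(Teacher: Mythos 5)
Your proof is correct, but it follows a genuinely different route from the paper's, which is a three-line contrapositive: if $\sup\overline\phi=\infty$, then $V_1(\overline\phi)<\infty$ forces $\sup_{a}\inf_{C_a}\overline\phi=\infty$, and evaluating the free energy at the zero-entropy Dirac measures $\delta_a$ sitting at the fixed points $(a,a,\dots)$ --- legitimate by the variational principle for the countable full shift under $V_n(\overline\phi)\to 0$, see \cite[Theorem 2.1.8]{mubook} --- gives $P(\overline\phi)\ge\sup_a\int\overline\phi\,{\rm d}\delta_a=\infty$. Your forward argument via finite subsystems and optimal Bernoulli measures proves more, namely the quantitative lower bound $P_\sigma(\overline\phi)\ge\log\sum_i e^{\inf_{C_i}\overline\phi}$ (a Gibbs-type inequality), and it has the virtue of invoking the variational principle only on the compact subshifts $\Sigma_N$, where it is classical \cite[Chapter 9]{wa}; the price is the Lagrange-multiplier computation, which the paper avoids since Dirac measures already suffice. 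The one step you should tighten is the appeal to the compact-exhaustion formula: as recalled in Section~\ref{subsec:CMS} it belongs to Sarig's theory of the Gurevich pressure for potentials of \emph{summable} variations, which is precisely the hypothesis you do not have here (and which you correctly identify as the obstruction to the partition-function route). Under the lemma's hypotheses the pressure in force is the Mauldin--Urba\'nski one, for which the inequality you actually need, $P_\sigma(\overline\phi)\ge P(\overline\phi|_{\Sigma_N})$, is immediate from the combinatorial definition (the restricted partition sums are termwise smaller), or can be deduced from \cite[Theorem 2.1.8]{mubook} by restricting the supremum to measures carried by $\Sigma_N$ --- though in the latter case you are invoking the same theorem that makes the paper's shorter proof work. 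With that justification substituted for the citation, your argument is complete.
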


\begin{proof}
We prove the contrapositive by a similar argument to  \cite[Proposition 2.1.9]{mubook}. If $\sup\overline{\phi}= \infty$ then, since $V_1(\overline{\phi})<\infty$, we have that $\sup_{a\in\N}\inf_{x\in C_a}\overline{\phi}(x)=\infty$. Thus if we let $\delta_a$ be the invariant Dirac measure supported at the fixed point $(a,a,\ldots)$ then by the variational principle for the full shift under the condition that $V_n(\overline\phi)\to 0$ as $n\to \infty$ (see \cite[Theorem 2.1.8]{mubook}), we have that
$$P(\overline{\phi})\geq \sup_{a\in\N}\left\{\int \overline{\phi}~{\rm d}\delta_a\right\}=\infty$$
which completes the proof.
\end{proof}
We can now extend the variational principle to the case when we do not assume $\phi$ is bounded above.
\begin{teo} \label{thm:var-non-sup}
Let $(\Sigma,\sigma)$ be a topologically mixing countable Markov shift. Let $\phi:\Sigma \to \R$ be a potential of summable variations then
\begin{equation*}
P(\phi)= \sup\left\{h_\sigma(\mu) + \int \phi ~{\rm d}\mu : \mu \in \M \text{ and } -\int \min\{\phi,0\} ~{\rm d} \mu < \infty		\right\}.
\end{equation*}
\end{teo}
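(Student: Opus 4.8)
The plan is to prove the two inequalities separately. The inequality $P(\phi)\le\sup\{\cdots\}$ I would obtain directly from the compact approximation $P_\sigma(\phi)=\sup\{P(\phi|K):K\in\mathcal{K}\}$ recalled above, together with the classical variational principle on each compact invariant set $K$ (see \cite[Chapter 9]{wa}). Since $\phi$ has summable variations it is continuous, so $\phi|_K$ is bounded and every $\sigma$-invariant measure supported on $K$ has finite integral and is therefore admissible in the supremum on the right-hand side. Letting $K$ range over $\mathcal{K}$ realises $P_\sigma(\phi)$ (including the value $+\infty$) as a supremum of free energies of compactly supported measures, which gives $P(\phi)\le\sup\{\cdots\}$ with no upper bound on $\phi$ required.

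For the reverse inequality it suffices, by the ergodic decomposition and affinity of $\mu\mapsto h_\sigma(\mu)+\int\phi\,{\rm d}\mu$, to bound the free energy of an ergodic $\mu$ with $-\int\phi\,{\rm d}\mu<\infty$; we may assume $P_\sigma(\phi)<\infty$, for otherwise there is nothing to prove. Such a $\mu$ charges some cylinder $C_a$, so I would induce on $C_a$, lift $\mu$ to $\overline\mu$ on the full shift $(\overline\Sigma,\overline\sigma)$, and apply the Abramov and Kac formulas (Remark~\ref{rmk:Kac and Ab}), noting $\int r_a\,{\rm d}\overline\mu=1/\mu(C_a)\in(0,\infty)$, to write
\[
h_\sigma(\mu)+\int\phi\,{\rm d}\mu=P_\sigma(\phi)+\frac{h_{\overline\sigma}(\overline\mu)+\int\overline\psi\,{\rm d}\overline\mu}{\int r_a\,{\rm d}\overline\mu},
\]
where $\overline\psi:=\overline\phi-P_\sigma(\phi)\,r_a$ is the induced potential of $\phi-P_\sigma(\phi)$. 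Since $\int r_a\,{\rm d}\overline\mu>0$, the desired bound reduces to showing $h_{\overline\sigma}(\overline\mu)+\int\overline\psi\,{\rm d}\overline\mu\le0$.

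The induced potential inherits $V_1(\overline\psi)<\infty$ and $V_n(\overline\psi)\to0$. The key input is that $P_{\overline\sigma}(\overline\psi)\le0$, i.e.\ the induced pressure at the exponent $P_\sigma(\phi)$ is non-positive (equivalently, the first-return partition functions satisfy $\sum_n e^{-nP_\sigma(\phi)}Z_n^*(\phi,C_a)\le1$), which is exactly Sarig's inducing/recurrence theory. Granting this, $P_{\overline\sigma}(\overline\psi)$ is finite, so Lemma~\ref{lem:finite} forces $\sup\overline\psi<\infty$: this is precisely where inducing pays off, since on the full shift a finite-pressure potential with $V_n\to0$ is automatically bounded above even though $\phi$ itself is not. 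One checks $\int\overline\psi\,{\rm d}\overline\mu>-\infty$ from $-\int\phi\,{\rm d}\mu<\infty$, so $\overline\mu$ is admissible, and Theorem~\ref{thm:CMS VP} applied on $(\overline\Sigma,\overline\sigma)$ yields $h_{\overline\sigma}(\overline\mu)+\int\overline\psi\,{\rm d}\overline\mu\le P_{\overline\sigma}(\overline\psi)\le0$, completing the argument.

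The main obstacle is the comparison $P_{\overline\sigma}(\overline\psi)\le0$ between the Gurevich pressure on $\Sigma$ and the induced pressure on $\overline\Sigma$; everything else is bookkeeping through Abramov--Kac and the distortion estimates $V_1(\overline\psi)<\infty$, $V_n(\overline\psi)\to0$. I would also note that the reverse inequality admits a self-contained alternative by truncation: apply Theorem~\ref{thm:CMS VP} to $\phi_N:=\min(\phi,N)$ (bounded above, of summable variations, with $-\int\phi_N\,{\rm d}\mu<\infty$), use monotonicity $P(\phi_N)\le P(\phi)$, and let $N\to\infty$ by monotone convergence; the inducing route is preferable here since it dovetails with the rest of the section and isolates the role of Lemma~\ref{lem:finite}.
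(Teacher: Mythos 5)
Your proposal follows essentially the same route as the paper's proof: the upper bound via approximation of the Gurevich pressure by compact invariant sets, and the lower bound by normalising to zero pressure, lifting an (ergodic) measure to the induced full shift, invoking Sarig's inducing theory for $P_{\overline\sigma}(\overline\psi)\le 0$, using Lemma~\ref{lem:finite} to force $\sup\overline\psi<\infty$, applying the variational principle on the full shift, and descending via the Abramov and Kac formulas. One small correction: since the induced potential is only guaranteed to satisfy $V_1(\overline\psi)<\infty$ and $V_n(\overline\psi)\to 0$ rather than summable variations, the variational principle to invoke on the induced system is \cite[Theorem 2.1.8]{mubook} (exactly as in the proof of Lemma~\ref{lem:finite} and in the paper's proof), not Theorem~\ref{thm:CMS VP}.
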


\begin{proof}
Note that the approximation by compact invariant subsets property of the Gurevich pressure and Walters variational principle for continuous maps on compact sets \cite{W} implies that for every $\phi$ of summable variations we have
\[P(\phi) \leq  \sup\left\{h_\sigma(\mu) + \int \phi ~{\rm d}\mu : \mu \in \M \text{ and } -\int \min\{\phi,0\} ~{\rm d} \mu < \infty		\right\}.\]
If $P(\phi)= \infty$ this completes the proof.

In order to prove the other inequality we assume that $P(\phi) <\infty$ and $\phi$ has summable variations. Assume further  that $\phi$ has zero pressure $P(\phi)=0$ (otherwise consider the potential $\phi -P(\phi)$).
The induced potential $\overline{\phi}$ on the cylinder $C_a$ is such that $P(\overline{\phi}) \leq 0$ (this follows as in the proof of Lemma 4.1 of \cite{IomTod10}, with the main difference being that since $\overline\phi$ is not assumed to have summable variations, we use the corresponding theory of Chapter 2 of [MU3] in place of the theory of Sarig used there).
If $\mu \in \M$ then there exists a cylinder $C_{a'}$ such that $\mu$ lifts to a measure $\overline{\mu}$ on the induced system as in \eqref{eq:proj meas}. Moreover, $\int \phi ~{\rm d} \mu > -\infty$ if and only if
$\int \overline{\phi} ~{\rm d} \overline{\mu} > -\infty$. Recall that the induced system is a full shift and that $\overline{\phi}$ satisfies the assumptions for Lemma~\ref{lem:finite}. Therefore, $\sup \overline{\phi} <\infty$ and the variational principle holds (see \cite[Theorem 2.1.8]{mubook}). Since the induced pressure is non-positive we have that
\[h_{\overline\sigma}(\overline{\mu}) + \int \overline{\phi}  ~{\rm d} \overline{\mu} \leq P(\overline{\phi}) \leq 0.\]
The return time is integrable so Abramov and Kac's Formulas (see Remark~\ref{rmk:Kac and Ab}) imply
\[h_\sigma(\mu) + \int \phi ~{\rm d} \mu \leq 0=P(\phi).\]
This holds for every invariant measure, so
\[ \sup \left\{h_\sigma(\mu) + \int \phi ~{\rm d}\mu : \mu \in \M \text{ and } -\int \min\{\phi,0\} ~{\rm d} \mu < \infty		\right\} \leq P(\phi). \]
\end{proof}

\subsection{Suspension semi-flows}
\label{ssec:semi-flow}
Let $(\Sigma, \sigma)$ be a countable Markov shift and $\tau \colon \Sigma \to \R^+$ be a positive continuous function such that for every  $x\in\Sigma$ we have
\begin{equation} \label{eq:Hopf cond}
\sum_{i=0}^{\infty}\tau(\sigma^i x)=\infty.
\end{equation}
We need this assumption to assure that the semi-flow is well defined for every time $t>0$.
Consider the space
\begin{equation*}\label{eq:flow phase }
Y= \{ (x,t)\in \Sigma  \times \R \colon 0 \le t \le\tau(x)\}/\sim
\end{equation*}
where $(x,\tau(x))\sim (\sigma(x),0)$ for
each $x\in \Sigma $.

The \emph{suspension semi-flow} over $\sigma$
with \emph{roof function} $\tau$ is the semi-flow $\Phi = (
\varphi_t)_{t \ge 0}$ on $Y$ defined by
\[
 \varphi_t(x,s)= (x,
s+t) \ \text{whenever $s+t\in[0,\tau(x)]$.}
\]
In particular,
\[
 \varphi_{\tau(x)}(x,0)= (\sigma(x),0).
\]
In the case of two-sided Markov shifts we can define a suspension
flow $(\varphi_t)_{t\in \R}$ in a similar manner.

\subsection{Invariant measures} \label{ssec:meas AK}
In this section we discuss the relation between invariant measures for the flow and invariant measures for the base map.
\begin{defi}
A probability measure $\mu$  on $Y$  is
\emph{$\Phi$-invariant} if $\mu(\varphi_t^{-1}A)= \mu(A)$ for every
$t \ge 0$ and every measurable set $A \subset Y$. Denote by $\M_\Phi$ the space of $\Phi$-invariant probability
measures on $Y$
\end{defi}

The space $\M_\Phi$ is closely related to the space $\M_\sigma$ of $\sigma$-invariant probability measures on $\Sigma $. Let us consider the space of $\sigma$-invariant measures for which $\tau$ is integrable,
\begin{equation}
\M_\sigma(\tau):= \left\{ \mu \in \mathcal{M}_{\sigma}: \int \tau ~{\rm d} \mu < \infty \right\}.
\end{equation}
Let $m$ denote  one-dimensional Lebesgue measure.  As the flow direction is one-dimensional and $m$ is the unique measure which is invariant under all translations, it follows that a $\Phi$-invariant probability measure will be of the form $C\mu\times m|_{Y}$ for $\mu \in \M_\sigma(\tau)$ and some $C>0$.  Indeed, it follows directly from classical results by Ambrose and Kakutani \cite{ak} that \[\frac{(\mu \times m)|_{Y} }{(\mu \times m)(Y)} \in \M_{\Phi}.\]
This suggests that the study of the map $R \colon \M_\sigma \to \M_\Phi$, defined by
\begin{equation*} \label{eq:R map}
R(\mu)=\frac{(\mu \times m)|_{Y} }{(\mu \times m)(Y)}
\end{equation*}
should allow for the translation of problems from the flow onto the shift map. This is indeed the case:
\begin{enumerate}
\item When $(\Sigma , \sigma)$ is a sub-shift of finite type defined over a finite alphabet, a compact case, the map $R$ is a bijection.

\item If $(\Sigma , \sigma)$ is a countable Markov shift and $\tau:\Sigma \to \R$ is  not  bounded above then it is possible for  there to be a measure $\nu \in \M_\sigma\sm \M_\sigma(\tau)$, i.e., such that $\int \tau ~{\rm d} \nu  = \infty$. In this situation the measure $R(\nu)$ is an infinite invariant measure for $\Phi$. Hence, the map $R(\cdot)$ is not well defined. Nevertheless, it follows directly from the results by Ambrose and Kakutani \cite{ak} that if $\tau$ is uniformly bounded away from zero then
 the map $R:\M_\sigma(\tau) \to  \M_{\Phi}$ is bijective.

\item   If $(\Sigma , \sigma)$ is a countable Markov shift and $\tau:\Sigma \to \R$ is  not  bounded away from zero then it is possible (see Section~\ref{sec:Hof roof}) that for an infinite (sigma-finite) $\sigma$-invariant measure $\nu$  we have $\int  \tau ~{\rm d} \nu <\infty$. In this case the measure $(\nu \times m)|_{Y} /(\nu \times m)(Y) \in \M_\Phi$. In such a situation,   the map $R$ is not surjective.
 \end{enumerate}

The situation in the first two cases is usually simpler since every measure in $ \M_\Phi$ can be written as $R(\nu)$, where $\nu \in \M_{\sigma}$.  Therefore, the ergodic properties of the flow can be reduced to the ergodic properties of the base. If the roof function is not bounded away from zero this is no longer the case.  However:

\begin{rem} \label{rmk:conser}
The time assumption given by equation \eqref{eq:Hopf cond}  implies, by a result of Hopf (see \cite[Proposition 1.1.6]{aa} and \cite{hop}), that every sigma-finite measure $\nu\in \M_\sigma$ defined on the base such that $\int \tau ~{\rm d} \nu < \infty$ is conservative.
 \end{rem}

Given a  function $g \colon Y\to\R$ we define the function
$\Delta_g\colon\Sigma\to\R$~by
\[
\Delta_g(x)=\int_{0}^{\tau(x)} g(x,t) \, ~{\rm d}t,
\]
whenever for every $x \in \Sigma$ we have that  $g(x,t)$ is integrable on $[0, \tau(x)]$ with respect to $t$ (see Remark \ref{rmk:exten} for regularity conditions).  If $\mu \in \M_{\Phi}$ is the normalisation of  $R(\nu)$ (note $\nu$ can be an infinite measure, as long as  $\int \tau ~{\rm d}\nu < \infty)$ then
\begin{equation*} \label{eq:rela}
\int_{Y} g \, ~{\rm d}R(\nu)= \frac{\int_\Sigma \Delta_g\, ~{\rm d}
\nu}{\int_\Sigma\tau \, ~{\rm d} \nu}.
\end{equation*}

\begin{rem}[Extension of potentials defined on the base] \label{rmk:exten}
Let $\phi \colon \Sigma  \to \R$ be a locally H\"older potential. It is shown in
\cite{brw} that there exists a function $g \colon Y \to
\R$ which is continuous in the Bowen-Walters metric such that $\Delta_g=\phi$.
\end{rem}

\subsection{Abramov's formula} \label{ssec:abra}
The entropy of a flow with respect to an invariant measure,  denoted $h_\Phi(\mu)$, can be defined as  the entropy of the corresponding time one map. The following  classical result obtained by
Abramov \cite{a} relates the entropy of a measure for the flow with the entropy of a measure for the base map: this is the continuous-time analogue of the Abramov formula in Remark~\ref{rmk:Kac and Ab}.
\begin{prop}[Abramov]
Let $\mu \in \M_{\Phi}$ be such that  $\mu=(\nu \times m)|_{Y} /(\nu \times m)(Y)$, where $\nu \in \M_{\sigma}$ then
\begin{equation}
h_{\Phi}(\mu)=\frac{h_{\sigma}(\nu)}{\int \tau ~{\rm d} \nu}.
\end{equation}
\label{prop:Abr}
\end{prop}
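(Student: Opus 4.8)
The plan is to reduce the flow entropy $h_\Phi(\mu)=h_\mu(\varphi_1)$ to the discrete Abramov/Kac formula for the base map recorded in Remark~\ref{rmk:Kac and Ab}. The structural observation driving everything is that $\Sigma\times\{0\}\subset Y$ is a cross-section for the semi-flow: a flow line leaving $(x,0)$ first returns to $\Sigma\times\{0\}$ at time $\tau(x)$, landing at $(\sigma x,0)$. Hence the first-return map of $\varphi_1$ to this cross-section is exactly $\sigma$, the return time is $\tau$, and the induced measure on the cross-section (coming from $\mu=(\nu\times m)|_Y/(\nu\times m)(Y)$) is $\nu$. Abramov's formula is then precisely the continuous-time incarnation of the statement that inducing multiplies entropy by the mean return time; by the usual reweighting of the ergodic decomposition of $\nu$ we may and do assume $\nu$ is ergodic so that the Birkhoff theorem applies to $\tau$.

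First I would treat the transparent case in which $\tau$ is integer-valued. Here $\varphi_1$ preserves the fractional part $\{t\}$ of the flow coordinate, so writing $(x,t)\mapsto\big((x,\lfloor t\rfloor),\{t\}\big)$ exhibits $(Y,\varphi_1,\mu)$ as isomorphic to the product of the Kakutani skyscraper $(\widehat Y,\widehat T,\widehat\mu)$ over $(\Sigma,\sigma)$ of height $\tau$ with the identity on $[0,1)$ carrying Lebesgue measure. The identity factor contributes zero entropy and the product formula gives $h_\mu(\varphi_1)=h_{\widehat T}(\widehat\mu)$. Inducing $\widehat T$ on the base level returns $\sigma$ with return time $\tau$ and induced measure $\nu$, so the discrete Abramov formula of Remark~\ref{rmk:Kac and Ab} yields at once $h_{\widehat T}(\widehat\mu)=h_\sigma(\nu)/\int\tau\,{\rm d}\nu$, which is the assertion since $h_\Phi(\mu)=h_\mu(\varphi_1)$.

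For a general roof the fractional-part trick fails, the floors no longer sit at integer heights, and one must argue directly with generating partitions. I would fix a countable generating partition $\mathcal{A}$ of $(\Sigma,\sigma,\nu)$ with $H_\nu(\mathcal{A})<\infty$ (disposing separately of the case $h_\sigma(\nu)=\infty$, where both sides are infinite), lift it to $Y$ through $(x,t)\mapsto x$, and refine by the partition of the flow direction into the unit-height strips $\{k\le t<k+1\}$ to obtain a partition $\mathcal{P}$ that is generating for $\varphi_1$. Reading an orbit of $\varphi_1$ then amounts to recording how many floors it crosses and which base symbols it meets; by the Birkhoff theorem applied to $\tau$ the number of crossings up to flow-time $n$ is asymptotic to $n/\int\tau\,{\rm d}\nu$, and each crossing reveals one further symbol of the $\sigma$-itinerary. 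The Shannon--McMillan--Breiman information rate of $\mathcal{P}$ under $\varphi_1$ therefore tends to $h_\sigma(\nu)/\int\tau\,{\rm d}\nu$, the flow-direction coordinate contributing nothing to the rate; alternatively one simply invokes the classical theorem of Abramov \cite{a}, of which this is the standard statement.

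The main obstacle is exactly this general case, and it is sharpened by the hypothesis of the present paper that $\tau$ need not be bounded away from zero. When $\tau$ can be arbitrarily small along an orbit, a single unit-time step of $\varphi_1$ may cross many floors at once, so the naive ``one symbol per unit time'' bookkeeping breaks down and the unit-strip partition interacts delicately with the gluing $(x,\tau(x))\sim(\sigma x,0)$. Keeping the information rate finite and identifying it correctly requires the integrability $\int\tau\,{\rm d}\nu<\infty$ (which, via \eqref{eq:Hopf cond} and Remark~\ref{rmk:conser}, also secures conservativity) together with an estimate ensuring that the entropy contributed by very thin floors is summable rather than divergent. Once these estimates are in place, the generating property of $\mathcal{P}$ and the Birkhoff count combine to give the stated identity.
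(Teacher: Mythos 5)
The paper does not prove Proposition~\ref{prop:Abr} at all: it is quoted as a classical theorem and attributed to Abramov \cite{a}, with the only paper-specific content being the remark that follows it (validity for positive roofs not bounded away from zero and for non-ergodic $\mu$ of the stated form, plus Savchenko's extension, Proposition~\ref{prop:Sav}, to infinite base measures). Your proposal therefore attempts strictly more than the paper does, and it splits cleanly into two parts. The part that is complete is the integer-valued roof case: the conjugacy $(x,t)\mapsto\bigl((x,\lfloor t\rfloor),\{t\}\bigr)$ onto the Kakutani skyscraper times the identity on $[0,1)$, the vanishing of the identity factor's entropy, and the discrete Abramov formula of Remark~\ref{rmk:Kac and Ab} applied to the return map to the base do give the formula; your reduction to ergodic $\nu$ is also sound, though it deserves a line of justification, since $\nu\mapsto h_\sigma(\nu)/\int\tau\,{\rm d}\nu$ is not affine and the argument only closes because the ergodic decomposition of $\mu$ reweights each component by exactly the factor $\int\tau\,{\rm d}\nu_\omega/\int\tau\,{\rm d}\nu$ that cancels the denominators. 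The part that is not complete is the general roof, which is the only case of interest here precisely because this paper allows $\inf\tau=0$: your Shannon--McMillan--Breiman bookkeeping ("each crossing reveals one further symbol") is the heuristic behind Abramov's theorem, and the estimates you defer --- controlling the information contributed by orbit segments that cross many thin floors in one unit of time, and verifying that the lifted strip partition is generating --- are the actual content of that theorem, not a technicality. So as a self-contained proof the proposal has a genuine gap in the general case; but since you explicitly allow falling back on the classical theorem of Abramov \cite{a}, your operative step then coincides with the paper's own treatment, which is simply the citation, and in that reading the proposal is acceptable. What your extra work buys is insight into why the formula holds and a correct identification of where the difficulty with small roof values lives; what the paper's bare citation buys is brevity and the correct scoping of novelty, reserving actual argument for the genuinely new case (Proposition~\ref{prop:Sav} and beyond) where $\nu$ is an infinite measure.
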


The result of Abramov holds for any suspension flow with positive (not necessarily bounded away from zero) roof function and for any invariant (not necessarily ergodic) finite measure for the flow $\mu$ that can be written as $\mu = R(\nu)$, where $\nu$ is an invariant probability measure for the base with $\int \tau ~{\rm d} \nu < \infty$. This settles  the case when the roof function is bounded away from zero, since every invariant measure for the flow is of that form. When the roof function is not bounded away from zero there are invariant measures for the flow $\mu$ that are not of  that form. But instead,
 $\mu = R(\nu)$, where $\nu$ is an infinite invariant measure for the shift. Savchenko \cite[Theorem 1]{sav} proved that if $\mu$ is  ergodic then  Abramov's formula still holds where Krengel's definition for entropy of an conservative infinite measure is used. Let $\mathcal{E}_\Phi$ be  the set of ergodic $\Phi$-invariant measures and recall that $\int \tau ~{\rm d}\nu < \infty$ implies that $\nu$ is conservative.
 \begin{prop}[Savchenko]
Let $\Phi$ be a suspension semi-flow defined over a countable Markov shift with positive roof function $\tau$. Let $\mu \in \mathcal{E}_\Phi$ be  such that  $\mu=(\nu \times m)|_{Y} /(\nu \times m)(Y)$, where $\nu$ is a sigma-finite (infinite) invariant measure for the shift with
 $\int \tau ~{\rm d}\nu < \infty$. Then
\begin{equation}
h_{\Phi}(\mu)=\frac{h_{\sigma}(\nu)}{\int \tau  ~{\rm d} \nu}.
\end{equation}
\label{prop:Sav}
\end{prop}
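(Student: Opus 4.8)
The plan is to reduce the infinite-measure statement to the finite-measure Abramov formula (Proposition~\ref{prop:Abr}) by inducing onto a cross-section of finite base measure. First I would fix a symbol $a\in\N$ with $0<\nu(C_a)<\infty$ — such a symbol exists because $\nu$ is sigma-finite and nonzero — and pass to the first return map $\sigma_a=\sigma^{r_a}$ on $C_a$, equipped with the normalised probability measure $\nu_a:=\nu|_{C_a}/\nu(C_a)$, which is $\sigma_a$-invariant by Kac's lemma. Associated to this I would consider the induced roof function $\overline{\tau}(x)=\sum_{k=0}^{r_a(x)-1}\tau(\sigma^k x)$ and the suspension semi-flow $\Phi'$ built over $(C_a,\sigma_a)$ with roof $\overline{\tau}$, carrying the normalisation $\mu'$ of $(\nu_a\times m)|_{Y'}$. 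The tower decomposition of the integral (Kac's formula, Remark~\ref{rmk:Kac and Ab}) gives $\int\tau\,{\rm d}\nu=\nu(C_a)\int\overline{\tau}\,{\rm d}\nu_a$, so that $\int\overline{\tau}\,{\rm d}\nu_a=\tfrac{1}{\nu(C_a)}\int\tau\,{\rm d}\nu<\infty$; hence $\nu_a$ is a genuine probability measure with integrable roof and Proposition~\ref{prop:Abr} applies to $\Phi'$, yielding
\[ h_{\Phi'}(\mu')=\frac{h_{\sigma_a}(\nu_a)}{\int\overline{\tau}\,{\rm d}\nu_a}. \]

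The crucial observation is that $\Phi'$ is nothing other than the original semi-flow $\Phi$ described relative to a coarser cross-section: the map sending $(x,t)\in Y'$, with $x\in C_a$ and $0\le t<\overline{\tau}(x)$, to the point $\varphi_t(x,0)\in Y$ intertwines $\Phi'$ with $\Phi$ and pushes $\mu'$ forward to $\mu$. Here is where the hypotheses enter: by Remark~\ref{rmk:conser}, condition~\eqref{eq:Hopf cond} together with $\int\tau\,{\rm d}\nu<\infty$ makes $\nu$ conservative, and ergodicity of $\mu$ guarantees that the forward orbit of $\mu$-a.e. point meets the fibre over $C_a$, so this identification is defined $\mu$-almost everywhere. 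Granting that it is a measure-preserving isomorphism of semi-flows, we obtain $h_\Phi(\mu)=h_{\Phi'}(\mu')$.

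It then remains only to translate the right-hand side back to the original data. We have already recorded $\int\overline{\tau}\,{\rm d}\nu_a=\tfrac{1}{\nu(C_a)}\int\tau\,{\rm d}\nu$, and the discrete Abramov formula for induced maps — which is precisely the defining property of the Krengel entropy of the infinite measure $\nu$ and is independent of the choice of $a$ — gives $h_{\sigma_a}(\nu_a)=\tfrac{1}{\nu(C_a)}h_\sigma(\nu)$. Substituting these two identities, the factors $\nu(C_a)$ cancel and
\[ h_\Phi(\mu)=h_{\Phi'}(\mu')=\frac{\tfrac{1}{\nu(C_a)}h_\sigma(\nu)}{\tfrac{1}{\nu(C_a)}\int\tau\,{\rm d}\nu}=\frac{h_\sigma(\nu)}{\int\tau\,{\rm d}\nu}. \]

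I expect the main obstacle to be the second paragraph: rigorously justifying that the change of cross-section is a genuine isomorphism carrying $\mu'$ to $\mu$, hence $h_\Phi(\mu)=h_{\Phi'}(\mu')$. The delicate point is that the base is a \emph{one-sided} shift, so the assignment of a point of $Y$ to its most recent passage through the fibre over $C_a$ is not pointwise well defined, and the naive cross-section map fails to be injective; the clean remedy is to pass to the natural extension, where the base becomes invertible and the cross-section argument is unproblematic, and then to descend using that both the Krengel entropy of $\nu$ and the flow entropy of $\mu$ are preserved under natural extensions. A secondary subtlety, which is where the sigma-finiteness of $\nu$ is genuinely used, is to confirm that $h_\sigma(\nu)$ for the infinite measure $\nu$ really is the quantity $\nu(C_a)\,h_{\sigma_a}(\nu_a)$; this is the content of the Krengel–Abramov theory for infinite-measure-preserving maps.
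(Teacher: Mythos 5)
First, a point of comparison: the paper does not prove Proposition~\ref{prop:Sav} at all; it is quoted directly from Savchenko \cite[Theorem 1]{sav}, so there is no internal proof to measure your argument against. Judged on its own merits, your route --- induce on a cross-section of finite base measure, apply the finite-measure Abramov formula (Proposition~\ref{prop:Abr}) to the induced flow, identify the induced flow with the original one, and close with the Krengel--Abramov identity $h_{\sigma_a}(\nu_a)=h_\sigma(\nu)/\nu(C_a)$ --- is the standard one and is essentially sound. You correctly isolate the two real issues: the meaning of $h_\sigma(\nu)$ for infinite $\nu$ (it is Krengel entropy, and the identity you cite is its defining property via Krengel's theorem), and the cross-section identification, where your natural-extension remedy works. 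Note that this last step only requires \emph{finite}-measure entropy theory, since $\mu$ and $\mu'$ are probability measures: Rokhlin's theorem applied to the time-one maps gives $h_\Phi(\mu)=h_{\hat\Phi}(\hat\mu)$ and $h_{\Phi'}(\mu')=h_{\hat\Phi'}(\hat\mu')$, and the two natural extensions are isomorphic via the (now invertible) cross-section map. You should also record that ergodicity of $\nu$ follows from ergodicity of $\mu$ (a nontrivial shift-invariant set $A$ yields the nontrivial flow-invariant set over $A$, since $\tau>0$); this is what makes the tower/Kac identity $\int\tau~{\rm d}\nu=\nu(C_a)\int\overline\tau~{\rm d}\nu_a$ and the pushforward claim $\pi_*\mu'=\mu$ legitimate, together with conservativity from Remark~\ref{rmk:conser}.

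There is, however, one justification that is wrong as written: $\sigma$-finiteness of $\nu$ does \emph{not} imply that some $1$-cylinder has finite measure (a $\sigma$-finite invariant measure can perfectly well be infinite on every $C_a$). The hypothesis that must be used here is $\int\tau~{\rm d}\nu<\infty$: since $\tau>0$, Chebyshev gives $\nu(\{\tau\ge 1/n\})\le n\int\tau~{\rm d}\nu<\infty$, and these sets cover $\Sigma$, so one of them has positive finite measure; if $\tau$ has summable variations (the standing regularity assumption in the paper), choosing $n$ with $V_n(\tau)$ small produces an $n$-cylinder $C$ with $0<\nu(C)<\infty$, and you induce on $C$ instead --- the induced system over an $n$-cylinder is again a countable full shift, so Proposition~\ref{prop:Abr} and the Krengel identity apply verbatim. (Alternatively, induce directly on $\{\tau\ge\epsilon\}$, at the cost of invoking Abramov's theorem for general special flows rather than the Markov-shift version stated in the paper.) In the paper's applications this issue is invisible because the measures arising there, Sarig's RPF measures, are finite on cylinders; but as the proposition is stated, your opening step needs this repair.
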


\begin{coro}
Let $\mu \in \M_{\Phi}$ be such that $\mu=(\nu \times m)|_{Y} /(\nu \times m)(Y)$ for $\nu\in \M_\sigma$.  Then  $h_{\Phi}(\mu)= \infty$ if and only if
$h_{\sigma}(\nu)= \infty$.
\label{cor:ent inf}
\end{coro}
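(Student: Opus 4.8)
The plan is to read the statement off from Abramov's formula (Proposition~\ref{prop:Abr}); the only real work is the infinite-entropy case, where that formula is not a priori meaningful as an identity between real numbers. First I would record why the normalisation is harmless: since $\mu=(\nu\times m)|_{Y}/(\nu\times m)(Y)$ is a probability measure, the constant $c:=(\nu\times m)(Y)=\int_\Sigma\tau\,{\rm d}\nu$ is finite, and it is strictly positive because $\tau>0$ everywhere and $\nu$ is a probability measure. So throughout we are dividing the base entropy by a fixed $c\in(0,\infty)$.

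One implication is then immediate. If $h_\sigma(\nu)<\infty$, Proposition~\ref{prop:Abr} applies verbatim and gives $h_\Phi(\mu)=h_\sigma(\nu)/c<\infty$; taking contrapositives, $h_\Phi(\mu)=\infty$ forces $h_\sigma(\nu)=\infty$.

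For the converse, $h_\sigma(\nu)=\infty\Rightarrow h_\Phi(\mu)=\infty$, I would prove the lower bound $h_\Phi(\mu)\ge h_\sigma(\nu)/c$ as an inequality in $[0,\infty]$, which needs no finiteness. Writing $h_\Phi(\mu)=h_\mu(\varphi_1)$ and $h_\sigma(\nu)=\sup_{\mathcal{P}}h_\nu(\sigma,\mathcal{P})$ over finite partitions $\mathcal{P}$ of $\Sigma$ with $H_\nu(\mathcal{P})<\infty$, I would, for each such $\mathcal{P}$, lift it to a partition $\mathcal{Q}=\{(P_i\times\R)\cap Y\}\vee\mathcal{R}$ of $Y$, where $\mathcal{R}$ is a finite partition in the flow direction, and run Abramov's comparison between $h_\mu(\varphi_1,\mathcal{Q})$ and $h_\nu(\sigma,\mathcal{P})/c$. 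Letting $h_\nu(\sigma,\mathcal{P})\to\infty$ then yields $h_\Phi(\mu)=\infty$.

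The main obstacle is precisely this last comparison, because $\tau$ is not assumed bounded away from zero: the cells $(P_i\times\R)\cap Y$ may be arbitrarily short, so the flow-direction partition $\mathcal{R}$ and the counting of $\varphi_1$-refinements cannot be handled by the standard special-flow bookkeeping that tacitly uses $\inf\tau>0$. Here I would exploit $\int_\Sigma\tau\,{\rm d}\nu=c<\infty$ to keep the contribution of the regions where $\tau$ is large integrable, so that the error terms in the comparison stay controlled uniformly in $\mathcal{P}$ and wash out in the supremum. Should one instead be content to invoke Abramov's theorem in its general form, valid with values in $[0,\infty]$, the equivalence is then immediate.
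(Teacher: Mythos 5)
Your proposal is correct, and its closing observation is precisely the paper's (implicit) proof: the corollary is stated in the paper with no argument at all, being an immediate consequence of Proposition~\ref{prop:Abr} (and Proposition~\ref{prop:Sav} for the ergodic case with $\nu$ infinite) read as an identity in $[0,\infty]$, combined with the fact that the normalising constant $(\nu\times m)(Y)=\int_\Sigma\tau\,{\rm d}\nu$ lies in $(0,\infty)$. The partition-lifting argument you sketch for the direction $h_\sigma(\nu)=\infty\Rightarrow h_\Phi(\mu)=\infty$ is therefore unnecessary; note also that, as written, it does not actually resolve the difficulty you yourself identify, since integrability of $\tau$ controls the contribution of cells where $\tau$ is \emph{large}, whereas the obstruction to the standard special-flow bookkeeping comes from cells where $\tau$ is \emph{small} --- so a self-contained proof along those lines would still require real work. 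Since Proposition~\ref{prop:Abr} is stated in the paper without any finiteness hypothesis on the entropies, invoking it directly, as in your final sentence, is exactly what the authors intend.
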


When the phase space is non-compact  there are several different notions of topological entropy of a flow, we will consider the following,
\begin{defi}
The \emph{topological entropy} of the suspension flow $(Y ,\Phi)$ denoted by $h(\Phi)$ is defined by
\begin{equation*}
h(\Phi):= \sup \left\{  h_{\Phi}(\mu) : \mu \in \mathcal{E}_\Phi\right\},
\end{equation*}
where $\mathcal{E}_\Phi $ is the set of ergodic $\Phi$-invariant measures.
\end{defi}
A measure $\mu \in \mathcal{E}_\Phi$ such that $h(\Phi)= h_{\Phi}(\mu)$ is called a \emph{measure of maximal entropy}. Since the phase space is not compact, there exist suspension flows of finite entropy with no measure of maximal entropy (see Example \ref{eg:Hof nlogn}). Moreover, there are suspension flows for which the measure of maximal entropy $\mu$ is of the form $\mu = R(\nu)$, where $\nu$ is an infinite invariant measure for the shift map (see Example \ref{lem:MP}). In Corollary \ref{cor:meas_max}  we establish criteria to determine when suspension flows have (or do not have) measures of maximal entropy.

\subsection{Flows and semi-flows} \label{ssec:invert flow}

Sinai remarked that it is possible to translate problems regarding thermodynamic formalism from flows to semi-flows.  Indeed, denote by  $(\Sigma^*, \sigma)$  a two-sided countable Markov shift.
Two continuous functions $\phi, \gamma \in C(\Sigma^*)$ are said to be \emph{cohomologous} if there exists a bounded continuous function $\psi \in C(\Sigma^*)$ such
that $\phi= \gamma +\psi \circ \sigma -\psi$. The pressure function is invariant under cohomology and so are the thermodynamic properties, such as the existence of equilibrium measures.  The following result is  proved in the same way as in \cite[Section 3]{Sin72}, \cite[Lemma 1.6]{Bow08} or  \cite[Theorem 2]{cq}, where we note that we require the usual two-sided analogue of variation here.   We remark that a corresponding result  on Walters potentials was recently proved by Daon, \cite[Theorem 7.1]{da}.

\begin{prop} \label{prop:two-sided}
If $\phi \in C(\Sigma^*)$ has summable variation, then there
exists $\gamma \in C(\Sigma^*)$ of summable variation cohomologous to $\phi$ such that
$\gamma(x)=\gamma(y)$ whenever $x_i=y_i$ for all $i \geq 0$ (that is,
$\gamma$ depends only on the future coordinates).  This result also holds in the class of H\"older potentials.
\end{prop}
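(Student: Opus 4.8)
The plan is to use the classical Sinai--Bowen retraction trick, building an explicit transfer function $\psi$ by telescoping $\phi$ along a map that collapses the past coordinates onto a canonical choice. First I would fix, for each symbol $a$, a left-infinite admissible word $\overleftarrow{a}=(\dots,a_{-2},a_{-1})$ with $B(a_{-1},a)=1$ and $B(a_{j},a_{j+1})=1$ for all $j<-1$; such a word exists because no column of $B$ is identically zero (equivalently, by topological mixing). Using these I define a retraction $\rho\colon\Sigma^*\to\Sigma^*$ that replaces the past of a point by the canonical past determined by its zeroth coordinate, namely $\rho(x)=(\dots,a_{-2},a_{-1},x_0,x_1,\dots)$ with $\overleftarrow{a}=\overleftarrow{x_0}$. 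The two features of $\rho$ I will exploit are that $\rho(x)$ depends only on the future coordinates $(x_0,x_1,\dots)$, and that $\rho(x)$ and $x$ agree on all coordinates $i\ge 0$; consequently $\sigma^k\rho(x)$ and $\sigma^k x$ agree on all coordinates $i\ge -k$.

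Next I would set
\[
\psi(x):=\sum_{k=0}^{\infty}\bigl(\phi(\sigma^k x)-\phi(\sigma^k\rho(x))\bigr).
\]
Since $\sigma^k x$ and $\sigma^k\rho(x)$ agree on coordinates $i\ge -k$, the two-sided summable variation of $\phi$ gives $|\phi(\sigma^k x)-\phi(\sigma^k\rho(x))|\le V_k(\phi)$, so the series converges absolutely and uniformly and $\psi\in C(\Sigma^*)$. Put $\gamma:=\phi+\psi\circ\sigma-\psi$. The main computation is a telescoping of the partial sums: writing everything to level $N$ and regrouping $\sum_{k=0}^N\phi(\sigma^k\rho(\sigma x))$ against $\sum_{j=1}^{N+1}\phi(\sigma^j\rho(x))$ (using that $\rho(\sigma x)$ and $\sigma\rho(x)$ share the same future, hence $\sigma^k\rho(\sigma x)$ and $\sigma^{k+1}\rho(x)$ agree on $i\ge -k$), and letting $N\to\infty$ while noting that the boundary term $\phi(\sigma^{N+1}x)-\phi(\sigma^{N+1}\rho(x))\to 0$, I would obtain
\[
\gamma(x)=\phi(\rho(x))-\sum_{k=0}^{\infty}\bigl(\phi(\sigma^{k}\rho(\sigma x))-\phi(\sigma^{k+1}\rho(x))\bigr).
\]
Every term on the right depends on $x$ only through $\rho(x)$ and $\rho(\sigma x)$, i.e. only through the future coordinates $(x_0,x_1,\dots)$, which is exactly the assertion that $\gamma$ depends only on future coordinates.

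Finally I would verify that $\gamma$ inherits the regularity of $\phi$. For two points agreeing on $0\le i\le n-1$, the term $\phi(\rho(x))-\phi(\rho(y))$ is controlled by $V_n(\phi)$, and in the series I would split at $k\approx n/2$: for small $k$ I use that $x$ and $y$ agree on the relevant future block, so each of the $\approx n/2$ terms costs $O\bigl(V_{\lfloor n/2\rfloor}(\phi)\bigr)$, while for large $k$ I use that each individual bracket is already $O(V_k(\phi))$ regardless of $x,y$. This yields a bound of the shape $V_n(\gamma)\lesssim V_n(\phi)+n\,V_{\lfloor n/2\rfloor}(\phi)+\sum_{k>n/2}V_k(\phi)$. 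In the H\"older case $V_k(\phi)=O(\theta^k)$ this decays geometrically (with a worse ratio, essentially $\sqrt{\theta}$), so $\gamma$ is H\"older and the final sentence of the statement follows cleanly. The summable-variation case is the delicate point, and the one I expect to be the main obstacle: the crude bound above only yields summable variation of $\gamma$ under the stronger hypothesis $\sum_n n\,V_n(\phi)<\infty$, so to obtain the conclusion under plain summable variation I anticipate needing the sharper bookkeeping of \cite{cq} (or the Walters-class argument of \cite[Theorem 7.1]{da}), which tracks the cancellation between consecutive brackets more carefully. The construction, convergence, and future-dependence are routine once $\rho$ is in place; it is this regularity estimate that requires care.
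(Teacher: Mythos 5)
Your construction is exactly the classical Sinai--Bowen retraction-and-telescoping argument that the paper itself invokes: the paper gives no independent proof, stating only that the result ``is proved in the same way as in'' \cite{Sin72}, \cite{Bow08} or \cite{cq}, which is precisely what you carry out (correctly, including the halving of the H\"older exponent that the paper also notes). The delicate point you flag---that the crude estimate $V_n(\gamma)\lesssim \sum_{k\ge n/2}V_k(\phi)$ only gives summable variation of $\gamma$ under the stronger hypothesis $\sum_n nV_n(\phi)<\infty$, so that the plain summable-variation case needs the sharper bookkeeping of \cite{cq} or \cite{da}---is exactly the part the paper likewise outsources to those same references, so your deferral mirrors the paper's own treatment rather than constituting a gap relative to it.
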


Note that in the case of H\"older potentials, the H\"older exponent $\alpha$ is halved when going from two-sided setting to the one-sided.  Proposition~\ref{prop:two-sided} implies that thermodynamic formalism for suspension flows can be studied by considering suspension semi-flows.

\section{Existence and uniqueness of equilibrium measures}
\label{sec:eq exist}
The definition of pressure for suspension flows over countable Markov shifts has been given with different degrees of generality by Savchenko \cite{sav}, Barreira and Iommi \cite{bi1}, Kempton \cite{ke} and Jaerisch, Kesseb\"ohmer and  Lamei \cite{jkl}. These definitions can be summarised as follows

\begin{teo} \label{thm: flow pres}
Let $(\Sigma, \sigma)$ a topologically mixing countable Markov shift and $\tau:\Sigma \to \R$ a positive function of summable variations satisfying  \eqref{eq:Hopf cond}.  Let $(Y, \Phi)$ be the suspension semi-flow over $(\Sigma, \sigma)$ with roof function $\tau$. Let $g:Y \to \R$ be a function such that $\Delta_g:\Sigma \to \R$ is of summable variations. Then the following equalities hold
\begin{eqnarray*}
P_{\Phi}(g)&:=&\lim_{t \to \infty} \frac{1}{t} \log \left(\sum_{\phi_s(x,0)=(x,0), 0<s \leq t} \exp\left( \int_0^s g(\phi_k(x,0)) ~{\rm d}k \right) \mathbbm{1}_{C_{i_0}}(x) \right) \\
&=& \inf\{t \in \R : P_{\sigma} (\Delta_g - t \tau) \leq 0\} =\sup \{t \in \R : P_{\sigma} (\Delta_g - t \tau) \geq 0\} \\
&=& \sup \{ P_{\Phi|K}(g) : K\in \cK \},
\end{eqnarray*}
where $\cK$ is the set of all compact and $\Phi$-invariant sets and  $P_{\Phi|K}(g)$ is the
classical topological pressure of the potential $g$ restricted to the
compact and $\Phi$-invariant set $K$.
\end{teo}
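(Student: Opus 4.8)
Throughout write $p(t):=P_\sigma(\Delta_g-t\tau)$ and $S_n\psi:=\sum_{k=0}^{n-1}\psi\circ\sigma^k$, and set
\[ s^*:=\inf\{t\in\R:p(t)\le 0\}. \]
The plan is to prove that each of the three remaining quantities equals $s^*$. Consider first the two implicit expressions. Since $\tau>0$ the potential $\Delta_g-t\tau$ decreases pointwise in $t$, so $p$ is non-increasing; and since $t\mapsto\Delta_g-t\tau$ is affine while the Gurevich pressure is convex in the potential, $p$ is convex. On the interior of the region where $p$ is finite its one-sided derivatives are of the form $-\int\tau\,d\mu$ for tangent measures $\mu$, and these are strictly negative because $\tau>0$; hence $p$ is strictly decreasing there and crosses $0$ at a single point. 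This forces $\inf\{t:p(t)\le 0\}=\sup\{t:p(t)\ge 0\}=s^*$. (If $p>0$ everywhere then $s^*=+\infty$ and all four quantities are infinite, handled similarly.)

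\emph{The compact-approximation formula.} For a compact $\sigma$-invariant $\hat K\subset\Sigma$, continuity and positivity of $\tau$ give $0<\inf_{\hat K}\tau\le\sup_{\hat K}\tau<\infty$, so the suspension over $\hat K$ is a classical compact suspension flow; the variational principle together with Abramov's formula (Proposition~\ref{prop:Abr}) identifies its topological pressure $P_{\Phi|K}(g)$ with the unique root $t_{\hat K}$ of $t\mapsto P_{\sigma|\hat K}(\Delta_g-t\tau)$. As $P_{\sigma|\hat K}\le P_\sigma$ and both are strictly decreasing, $t_{\hat K}\le s^*$, so $\sup_{K\in\cK}P_{\Phi|K}(g)\le s^*$. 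For the reverse inequality I exhaust $\Sigma$ by the compact topologically mixing SFTs $\Sigma_N$ on the first $N$ symbols, for which $P_{\sigma|\Sigma_N}(\cdot)\uparrow P_\sigma(\cdot)$ by the approximation-by-compacts property of the Gurevich pressure; then $t_{\Sigma_N}\uparrow s^*$, giving equality.

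\emph{The periodic-orbit limit.} A point $(x,0)$ is $s$-periodic for $\Phi$ precisely when $\sigma^n x=x$ and $s=S_n\tau(x)$ for some $n\ge1$, and then $\int_0^s g(\varphi_k(x,0))\,dk=S_n\Delta_g(x)$ by decomposing the orbit over the successive roofs. Hence the defining sum is
\[ Z(t)=\sum_{n\ge 1}\ \sum_{\substack{\sigma^n x=x,\ x\in C_{i_0}\\ S_n\tau(x)\le t}}\exp\big(S_n\Delta_g(x)\big), \]
whose Laplace--Stieltjes transform is $\int_0^\infty e^{-\beta s}\,dZ(s)=\sum_{n\ge1}Z_n(\Delta_g-\beta\tau,C_{i_0})$. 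This series converges for $\beta>s^*$ and diverges for $\beta<s^*$, so by the elementary relation between the abscissa of convergence of a positive Laplace--Stieltjes transform and the exponential growth rate of its distribution function, $\lim_{t\to\infty}\frac1t\log Z(t)=s^*$. Concretely, for $\theta>s^*$ one has $p(\theta)<0$, hence $C_\theta:=\sum_n Z_n(\Delta_g-\theta\tau,C_{i_0})<\infty$, and $S_n\Delta_g=S_n(\Delta_g-\theta\tau)+\theta S_n\tau$ with $0<S_n\tau\le t$ yields $Z(t)\le e^{\theta t}C_\theta$, giving $P_\Phi(g)\le s^*$. For the matching lower bound I compare with the sub-flows over $\Sigma_N$: there the periodic-orbit growth rate equals the compact topological pressure $t_{\Sigma_N}$ (the classical Bowen--Ruelle identity), and these orbits form a subset of all orbits, so $P_\Phi(g)\ge t_{\Sigma_N}\uparrow s^*$.

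The only genuinely delicate point is the lower bound for $P_\Phi(g)$ when $\tau$ is not bounded away from zero: then infinitely many base orbits may have arbitrarily short flow-period and the orbits of period $\le t$ cannot be counted directly. Restricting to the SFTs $\Sigma_N$—on which $\tau$ is automatically bounded below—removes this difficulty, at the price of importing the classical compact periodic-orbit identity and of verifying the monotone convergence $t_{\Sigma_N}\uparrow s^*$; it is precisely here that the strict monotonicity of $p$ established in the first step guarantees the limit is $s^*$ rather than a strictly smaller value.
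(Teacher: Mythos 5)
First, a caveat on the comparison: the paper itself gives no proof of Theorem~\ref{thm: flow pres} --- it is presented as a summary of definitions and results from \cite{sav,bi1,ke,jkl} --- so your attempt can only be judged on its own merits and against those references. Judged that way, it has a genuine gap, and the gap sits exactly in the regime this paper is about (roof functions not bounded away from zero). Your first step asserts that $p(t)=P_\sigma(\Delta_g-t\tau)$ is strictly decreasing on the interior of its finiteness region because ``its one-sided derivatives are of the form $-\int\tau\,{\rm d}\mu$ for tangent measures $\mu$''. In the countable-state setting tangent functionals of the Gurevich pressure need not be realised by invariant measures (they fail to be precisely for transient potentials), and the slopes $-\int\tau\,{\rm d}\mu$ of the supporting affine functions can accumulate at $0$, since $\inf_{\mu\in\M_\sigma}\int\tau\,{\rm d}\mu$ can be $0$ when $\tau$ is not bounded away from zero. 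As a result $p$ can be constant, equal to $0$, on a whole half-line: the paper's Example~\ref{eg:better-worse} (realised concretely in Example~\ref{eg:Hof nlogn}) has $P_\sigma(-t\tau)>0$ for $t<1$ and $P_\sigma(-t\tau)=0$ for \emph{all} $t\ge 1$. For such $\tau$ (take $g\equiv 0$) there is no single crossing: $\inf\{t:p(t)\le 0\}=1$ while $\{t:p(t)\ge 0\}=\R$, so the identity $\inf\{t:p(t)\le0\}=\sup\{t:p(t)\ge0\}$ cannot be derived your way --- indeed with non-strict inequalities it is the delicate point of the whole statement, and it only becomes unproblematic when $\tau\ge c>0$, where $p(t')\le p(t)-c(t'-t)$ gives genuine strict decrease; that is the setting of \cite{bi1}, not the general one.

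The same degeneracy infects your Laplace--Stieltjes step. You use ``for $\theta>s^*$ one has $p(\theta)<0$, hence $C_\theta=\sum_n Z_n(\Delta_g-\theta\tau,C_{i_0})<\infty$''. When $p\equiv 0$ on $[s^*,\infty)$ the premise fails, and whether $\sum_n Z_n(\psi,C_{i_0})$ converges at a parameter where $P_\sigma(\psi)=0$ is exactly the recurrence/transience dichotomy of Definition~\ref{def:rec pots}; it cannot be read off from the value of the pressure, so the bound $Z(t)\le e^{\theta t}C_\theta$ is unjustified there. Relatedly, the cumulative sum over all periods $0<s\le t$ is non-decreasing in $t$, so its exponential growth rate is automatically $\ge 0$ and cannot recover a negative pressure; a correct treatment works with the windowed sums over periods $s\in(n-1,n]$, as in Section~\ref{sec:rec flow} of the paper and in \cite{ke,jkl}. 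The parts of your argument that do hold up are the compact-approximation ones: every compact $\sigma$-invariant set uses finitely many symbols, so the sets $\Sigma_N$ are cofinal among compact invariant sets, the roof is bounded away from zero on each of them, the classical Bowen--Ruelle identity applies, and your argument that the roots satisfy $t_{\Sigma_N}\uparrow s^*$ is correct. This proves the equality of $\inf\{t:p(t)\le0\}$ with the compact-approximation expression, but not the remaining two equalities, whose validity in the flat-pressure cases requires either strict inequalities or the modified partition sums used in the cited references.
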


The variational principle has been proved in the context of suspension flows defined over countable Markov shifts (analogous to Theorem~\ref{thm:CMS VP})  with different degrees of generality (see \cite{bi1, jkl, ke,sav}). The version we will be interested here is the following:

\begin{teo}[Variational Principle] Under the same assumptions of Theorem \ref{thm: flow pres} we have
\begin{equation*}
P_\Phi(g)=\sup \left\{ h_{\Phi}(\mu) +\int_Y g ~{\rm d} \mu : \mu\in
\mathcal{E}_{\Phi} \text{ and } -\int_Y g \, ~{\rm d}\mu <\infty \right\},
\end{equation*}
where $\mathcal{E}_\Phi $ is the set of ergodic $\Phi$-invariant measures.
\end{teo}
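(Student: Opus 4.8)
The plan is to prove the two inequalities separately, using the characterisations of $P_\Phi(g)$ in Theorem~\ref{thm: flow pres} to pass between the flow and the base. Assume $P_\Phi(g)<\infty$ (if it is infinite, the inequality $P_\Phi(g)\le\sup\{\cdots\}$ supplied below already forces the supremum to be infinite). Write $s:=P_\Phi(g)$ and $\psi:=\Delta_g-s\tau$; since $\Delta_g$ and $\tau$ have summable variations, so does $\psi$. From the inf/sup formulas of Theorem~\ref{thm: flow pres}, together with the fact that $t\mapsto P_\sigma(\Delta_g-t\tau)$ is non-increasing and convex (hence continuous on the interior of its finiteness domain), I would first record that $P_\sigma(\psi)\le 0$.

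For the inequality $P_\Phi(g)\le\sup\{\cdots\}$, I would use the last line of Theorem~\ref{thm: flow pres}, namely $P_\Phi(g)=\sup\{P(g|_K):K\in\cK\}$ over compact $\Phi$-invariant sets $K$. On each such $K$ the flow is a continuous flow on a compact metric space, so the classical variational principle applies and $P(g|_K)$ equals the supremum of $h_\Phi(\mu)+\int_Y g\,{\rm d}\mu$ over $\mu\in\M_\Phi$ supported in $K$; since $g$ is bounded on $K$, these measures automatically satisfy $-\int_Y g\,{\rm d}\mu<\infty$. Passing to ergodic components (entropy and the integral are affine) lets me take the approximating measures in $\mathcal{E}_\Phi$, and letting $K$ range over $\cK$ gives the bound.

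For the reverse inequality, fix any $\mu\in\mathcal{E}_\Phi$ with $-\int_Y g\,{\rm d}\mu<\infty$. By the discussion in Section~\ref{ssec:meas AK} there is a ($\sigma$-finite, possibly infinite) $\sigma$-invariant measure $\nu$ with $\int\tau\,{\rm d}\nu<\infty$ and $\mu=(\nu\times m)|_Y/(\nu\times m)(Y)$, and $-\int_Y g\,{\rm d}\mu<\infty$ is equivalent to $-\int\Delta_g\,{\rm d}\nu<\infty$. Combining the entropy formula (Proposition~\ref{prop:Abr} when $\nu$ is finite, Savchenko's Proposition~\ref{prop:Sav} when $\nu$ is infinite) with the integral relation of Section~\ref{ssec:meas AK} gives
\begin{equation*}
h_\Phi(\mu)+\int_Y g\,{\rm d}\mu=\frac{h_\sigma(\nu)+\int\Delta_g\,{\rm d}\nu}{\int\tau\,{\rm d}\nu},
\end{equation*}
so it suffices to show $h_\sigma(\nu)+\int\psi\,{\rm d}\nu\le 0$. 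If $\nu$ is finite I normalise it to a probability measure and apply Theorem~\ref{thm:var-non-sup} to $\psi$: since $-\int\psi\,{\rm d}\nu<\infty$, this yields $h_\sigma(\nu)+\int\psi\,{\rm d}\nu\le P_\sigma(\psi)\le 0$ (and in particular $h_\sigma(\nu)<\infty$).

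The infinite-measure case is where the work lies, and I expect it to be the main obstacle. Here $\nu$ is conservative by Remark~\ref{rmk:conser} (using \eqref{eq:Hopf cond} and $\int\tau\,{\rm d}\nu<\infty$), and I would invoke the infinite-measure variational principle, Theorem~\ref{thm:infiniteRPF}(a), applied to $\psi$: provided $\psi$ is recurrent with $P_\sigma(\psi)=0$, and $\nu$ is ergodic and finite on cylinders, it gives $h_\sigma(\nu)\le\int(P_\sigma(\psi)-\psi)\,{\rm d}\nu=-\int\psi\,{\rm d}\nu$, which is exactly the required bound (and forces $h_\sigma(\nu)<\infty$, consistent with Corollary~\ref{cor:ent inf} and $P_\Phi(g)<\infty$). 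The crux is verifying these hypotheses in this generality: one must check that an infinite $\nu$ projecting to a finite flow measure is finite on cylinders, and that the relevant $\psi$ is genuinely recurrent so that $P_\sigma(\psi)=0$. I expect the transient situation to produce no such infinite conservative $\nu$ (so the bound holds vacuously there), and finiteness on cylinders to hold because, when $\int\tau\,{\rm d}\nu<\infty$, the mass of an infinite invariant measure can only escape to infinity in the alphabet rather than accumulate on a single cylinder.
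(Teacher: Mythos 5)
The paper does not actually prove this theorem: it quotes it from \cite{sav,bi1,ke,jkl}, and the precise version stated (roof function not bounded away from zero, supremum over \emph{ergodic} measures only) is attributed to \cite{ke,jkl}. So your proposal can only be judged on its own merits. Its skeleton is the standard one and much of it is sound: the upper bound via the compact-approximation formula in Theorem~\ref{thm: flow pres} plus the classical variational principle on compact invariant sets is fine, as is the reduction of the lower bound, via Abramov/Savchenko and the integral relation of Section~\ref{ssec:meas AK}, to the inequality $h_\sigma(\nu)+\int\psi~{\rm d}\nu\le 0$ for $\psi=\Delta_g-P_\Phi(g)\tau$; and the case of finite $\nu$ is correctly settled by Theorem~\ref{thm:var-non-sup}.

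\textbf{The gap.} It sits exactly where you place ``the crux'', and your proposed resolution fails. Theorem~\ref{thm:infiniteRPF}(a) presupposes that $\psi$ is \emph{recurrent}, and your fallback --- that transience of $\psi$ leaves no infinite conservative ergodic $\nu$ with $\int\tau~{\rm d}\nu<\infty$, so the bound holds vacuously --- is false. By Remark~\ref{rmk:conser}, \emph{every} sigma-finite invariant measure with $\int\tau~{\rm d}\nu<\infty$ is conservative, regardless of any potential; transience in Theorem~\ref{thm:RPF}(c) only rules out conservative $\psi$-\emph{conformal} measures, not conservative invariant ones. The paper itself supplies a counterexample: in Example~\ref{eg:Hof nlogn} take $g\equiv 0$, so that $P_\Phi(g)=h(\Phi)=1$ and $\psi=-\tau$, which is transient with $P_\sigma(-\tau)=0$. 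On the induced full shift let $\overline{\nu}$ be the Bernoulli measure with $\overline{\nu}(\overline{C_n})\propto n^{-2}(\log n)^{-1}$ for $n\ge 2$. Since $\overline{\tau}|_{\overline{C_n}}=-\log a_n\asymp\log n$ while $r_0|_{\overline{C_n}}=n+1$, one gets $\int\overline{\tau}~{\rm d}\overline{\nu}<\infty$ but $\int r_0~{\rm d}\overline{\nu}=\infty$; by Remark~\ref{rem:kac-flow} the projection $\nu$ is an infinite, ergodic, conservative $\sigma$-invariant measure with $\int\tau~{\rm d}\nu<\infty$, and $(\nu\times m)|_Y/(\nu\times m)(Y)$ is a genuine member of $\mathcal{E}_\Phi$ competing in the supremum. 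For every such measure the inequality $h_\sigma(\nu)\le\int\tau~{\rm d}\nu$ still has to be proved, and nothing in your argument proves it. Two further hypotheses of Theorem~\ref{thm:infiniteRPF}(a) are also left unverified: finiteness on cylinders (your ``mass escapes to infinity in the alphabet'' heuristic is not an argument), and the fact that when $P_\sigma(\psi)<0$ the quantity $\int(P_\sigma(\psi)-\psi)~{\rm d}\nu$ is not even meaningful against an infinite measure without extra care.

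\textbf{How to close it.} The infinite-measure case should be handled by the same inducing argument the paper uses for Theorem~\ref{thm:var-non-sup}, not by Theorem~\ref{thm:infiniteRPF}: choose a cylinder with $0<\nu(C_a)<\infty$, use Krengel's entropy formula and Kac's formula for infinite conservative ergodic measures to write $h_\sigma(\nu)+\int\psi~{\rm d}\nu=\nu(C_a)\left(h_{\overline\sigma}(\nu_{C_a})+\int\overline{\psi}~{\rm d}\nu_{C_a}\right)$, and then bound the bracket by $P_{\overline{\sigma}}(\overline{\psi})$ using Lemma~\ref{lem:finite} and the full-shift variational principle. The key point is that $P_{\overline{\sigma}}(\overline{\psi})\le 0$ whenever $P_\sigma(\psi)\le 0$, in the recurrent and transient cases alike --- precisely the fact invoked in the proof of Theorem~\ref{thm:var-non-sup} --- so no recurrence dichotomy is needed at all. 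Separately, your derivation of $P_\sigma(\psi)\le 0$ from convexity and interior continuity is incomplete when $P_\Phi(g)$ lies at the boundary of the finiteness domain (the case $P_\Phi(g)=s_\infty$, which genuinely occurs in Section~\ref{sec:Hof roof}); there one should argue via the approximation of the Gurevich pressure by compact invariant sets, on which $\tau$ is automatically integrable.
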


Note that the set of measures considered in the variational principle is that of ergodic flow-invariant measures and not the the set of flow-invariant probability  measures. If the roof function is bounded away from zero the variational principle holds in complete generality \cite{bi1}. However, at present time the available proofs in the case that the roof function is not bounded away from zero \cite{jkl, ke} only hold for ergodic measures. The reason for this being that Abramov formula only holds for these measures (see Subsection \ref{ssec:abra}). Another way to approach this issue could be by taking the time-1 map of the flow and applying the usual ergodic decomposition argument, but this should require extra assumptions, for example on the regularity of the potential $g$ and the roof $\tau$.  Also note that it follows from our definitions that $P_{\Phi}(0)=h(\Phi)$.

Similarly to \eqref{eq:eq meas}, we define:

\begin{defi}
A measure $\mu \in \mathcal{E}_{\Phi}$ is called an \emph{equilibrium measure} for $g$ if
\begin{equation*}
P_\Phi(g)= h_\Phi(\mu) + \int g ~{\rm d} \mu.
\end{equation*}
\end{defi}

The next theorem is our first main result. In this general context, with roof function not necessarily bounded away from zero, we establish conditions to ensure the existence of equilibrium measures. Note that we are not ready to state a complete analogue of Theorem~\ref{thm:RPF} for semi-flows since we haven't yet defined recurrence and transience in that case (see Section~\ref{sec:rec flow} for this).

\begin{teo}\label{thm:flow eqstate}
Let $\Phi$ be a finite entropy suspension semi-flow on $Y$ defined over a countable Markov
shift  $(\Sigma, \sigma)$  and a locally H\"older roof function $\tau$  satisfying  \eqref{eq:Hopf cond}. Let $g \colon Y \to \R$ be a continuous function such
that $\Delta_g$ is locally H\"older. In the following cases there exists an equilibrium measure for $g$;
\begin{enumerate}[label=({\alph*}),  itemsep=0.0mm, topsep=0.0mm, leftmargin=7mm]
\item If $P_\sigma(\Delta_g -P_{\Phi}(g) \tau)=0$ and $\Delta_g -P_{\Phi}(g) \tau$ is positive recurrent with equilibrium measure $\nu_g$ satisfying $\int \tau ~{\rm d} \nu_g < \infty$;
\item   If $P_\sigma(\Delta_g -P_{\Phi}(g) \tau)=0$ and the potential $\Delta_g -P_{\Phi}(g) \tau$ is null recurrent with infinite RPF measure $\nu_g$  and  $\int \tau ~{\rm d}\nu_g < \infty$.
   \end{enumerate}
In any other case the potential $g$ does not have an equilibrium measure. Thus there is no equilibrium measure for $g$ when
\begin{enumerate}[label=({\roman*}),  itemsep=0.0mm, topsep=0.0mm, leftmargin=9mm]
\item $P_\sigma(\Delta_g -P_{\Phi}(g) \tau)<0$;
 \item $P_\sigma(\Delta_g -P_{\Phi}(g) \tau)=0$ and the potential $\Delta_g -P_{\Phi}(g) \tau$ is positive recurrent with equilibrium measure $\nu_g$ such that  $\int \tau ~{\rm d} \nu_g = \infty$;
 \item $P_\sigma(\Delta_g -P_{\Phi}(g) \tau)=0$ and the potential $\Delta_g -P_{\Phi}(g) \tau$ is null recurrent with infinite RPF measure $\nu_g$  and $\int \tau ~{\rm d} \nu_g = \infty$;
\item  $\Delta_g -P_{\Phi}(g) \tau$ is transient.
 \end{enumerate}
\label{thm:flow VP}
 \end{teo}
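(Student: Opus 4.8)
The plan is to reduce the existence of an equilibrium measure for $g$ to a question about the single base potential $\psi:=\Delta_g-P_\Phi(g)\,\tau$, and then to read off each of the six cases from the recurrence classification of $\psi$ supplied by the Ruelle--Perron--Frobenius theory (Theorems~\ref{thm:RPF} and \ref{thm:infiniteRPF}). First I would set up the dictionary between flow measures and base measures. Every $\mu\in\mathcal{E}_\Phi$ is of the form $\mu=(\nu\times m)|_Y/(\nu\times m)(Y)$ for an ergodic $\sigma$-invariant, $\sigma$-finite measure $\nu$ with $\int\tau\,{\rm d}\nu<\infty$; by Remark~\ref{rmk:conser} such a $\nu$ is automatically conservative. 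Combining the Abramov and Savchenko formulas (Propositions~\ref{prop:Abr} and \ref{prop:Sav}) with the integral relation for $\Delta_g$ gives
\[
h_\Phi(\mu)+\int_Y g\,{\rm d}\mu-P_\Phi(g)=\frac{h_\sigma(\nu)+\int(\Delta_g-P_\Phi(g)\tau)\,{\rm d}\nu}{\int\tau\,{\rm d}\nu}=\frac{h_\sigma(\nu)+\int\psi\,{\rm d}\nu}{\int\tau\,{\rm d}\nu}.
\]
Since the flow has finite entropy, $h_\sigma(\nu)=h_\Phi(\mu)\int\tau\,{\rm d}\nu<\infty$, and the admissibility requirement $-\int_Y g\,{\rm d}\mu<\infty$ forces $\int\psi\,{\rm d}\nu$ to be finite; as $\int\tau\,{\rm d}\nu>0$, the displayed identity shows that $\mu$ is an equilibrium measure for $g$ if and only if $h_\sigma(\nu)+\int\psi\,{\rm d}\nu=0$.

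Next I would record, using the variational characterisation in Theorem~\ref{thm: flow pres}, that $P_\sigma(\psi)\le 0$, so that by the variational principle the numerator $h_\sigma(\nu)+\int\psi\,{\rm d}\nu$ never exceeds $0$; the whole question becomes whether equality is attained by some $\nu$ that lifts, i.e.\ with $\int\tau\,{\rm d}\nu<\infty$. The existence cases follow from the RPF theorem. In case (a), where $P_\sigma(\psi)=0$ and $\psi$ is positive recurrent, the normalised RPF measure $\nu_g$ is the unique finite measure with $h_\sigma(\nu_g)+\int\psi\,{\rm d}\nu_g=0$, and when $\int\tau\,{\rm d}\nu_g<\infty$ its image $R(\nu_g)$ is the desired equilibrium measure. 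In case (b), where $\psi$ is null recurrent, the infinite RPF measure $\nu_g$ satisfies $h_\sigma(\nu_g)+\int\psi\,{\rm d}\nu_g=0$ by Theorem~\ref{thm:infiniteRPF}(b) (here $P_\sigma(\psi)=0$ makes $\int(P_\sigma(\psi)-\psi)\,{\rm d}\nu_g=-\int\psi\,{\rm d}\nu_g$ finite, the hypothesis being guaranteed by finite flow entropy); again $\int\tau\,{\rm d}\nu_g<\infty$ is exactly what makes $R(\nu_g)$ a probability measure on $Y$, and the displayed identity then gives $h_\Phi(R(\nu_g))+\int_Y g\,{\rm d}R(\nu_g)=P_\Phi(g)$. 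One must check that $R$ preserves ergodicity so that Savchenko's formula applies to $R(\nu_g)$.

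For the non-existence cases I would show the equation $h_\sigma(\nu)+\int\psi\,{\rm d}\nu=0$ has no admissible liftable solution. In case (i), $P_\sigma(\psi)<0$: for every finite $\nu$ the variational principle (Theorem~\ref{thm:var-non-sup}) gives $h_\sigma(\nu)+\int\psi\,{\rm d}\nu\le P_\sigma(\psi)<0$, while for infinite conservative $\nu$ with $\int\psi\,{\rm d}\nu$ finite, Theorem~\ref{thm:infiniteRPF}(a) would force the nonnegative Krengel entropy $h_\sigma(\nu)$ to be bounded by $\int(P_\sigma(\psi)-\psi)\,{\rm d}\nu=-\infty$, a contradiction; hence the numerator is strictly negative throughout, and $P_\Phi(g)$ is a supremum approached only as $\int\tau\,{\rm d}\nu\to\infty$. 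In case (iv), $\psi$ is transient, so there is no conservative conformal measure and no measure realises the variational bound, again giving strict inequality. In cases (ii) and (iii) a measure $\nu_g$ with $h_\sigma(\nu_g)+\int\psi\,{\rm d}\nu_g=0$ does exist, but $\int\tau\,{\rm d}\nu_g=\infty$ means $(\nu_g\times m)(Y)=\infty$, so $\nu_g$ does not lift to a probability measure on $Y$; uniqueness of the RPF/equilibrium measure (Proposition~\ref{prop:uniq-rpf} and the final clause of Theorem~\ref{thm:infiniteRPF}(b)) then shows every other admissible $\nu$ gives strictly negative numerator, so no flow measure attains $P_\Phi(g)$.

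The main obstacle I anticipate is the systematic treatment of \emph{infinite} base measures: making precise sense of $h_\sigma(\nu)+\int\psi\,{\rm d}\nu$ via Krengel entropy, and establishing both the variational inequality and its uniqueness clause in the infinite-measure regime. This is exactly the novelty forced by allowing $\tau$ not to be bounded away from zero, since then genuinely infinite $\nu$ can satisfy $\int\tau\,{\rm d}\nu<\infty$ and produce honest flow probability measures. Verifying the finiteness hypotheses needed to apply Theorem~\ref{thm:infiniteRPF}, and ruling out an infinite measure ``accidentally'' solving the equilibrium equation in the non-existence cases, are the delicate points; these may be handled, as in the proof of Theorem~\ref{thm:var-non-sup}, by inducing to a cylinder $C_a$ and reducing to a full-shift computation where the classical variational principle and uniqueness are available.
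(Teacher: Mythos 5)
Your proposal is correct and takes essentially the same route as the paper: you reduce everything to the base potential $\Delta_g-P_\Phi(g)\tau$ via the Abramov/Savchenko and Kac formulas, use Sarig's infinite-measure variational principle (Theorem~\ref{thm:infiniteRPF}, i.e.\ \cite[Theorem 2]{Sar01}) both for existence in the null-recurrent case and for the contradiction when $P_\sigma(\Delta_g-P_\Phi(g)\tau)<0$, and invoke RPF theory plus uniqueness of the RPF/equilibrium measure to exclude the remaining cases --- exactly the structure of the paper's Lemmas~\ref{lem:neg base} and \ref{lem:null eq} and the converse argument in its main proof. The terse points in your sketch (e.g.\ that a measure attaining the variational bound for a transient potential would yield a conservative conformal measure, hence a contradiction) are handled at the same level of brevity in the paper itself.
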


Before proving this key theorem, we state and prove a uniqueness result.

\begin{teo} \label{thm:uni}
Let  $(Y, \Phi)$ be a suspension semi-flow defined over  a topologically mixing countable Markov shift $(\Sigma , \sigma)$ with a locally H\"older roof function $\tau$. Let $g:Y \to \R$ be such that $\Delta_g$ is locally H\"older. Then $g$ has at most one equilibrium state.
\end{teo}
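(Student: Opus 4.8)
The plan is to transfer the uniqueness question from the flow to the base shift, where strong uniqueness results are already available, using the correspondence $R$ between measures on $\Sigma$ and measures on $Y$. Suppose $\mu_1, \mu_2 \in \mathcal{E}_\Phi$ are two equilibrium measures for $g$, so that $P_\Phi(g) = h_\Phi(\mu_j) + \int g \, {\rm d}\mu_j$ for $j = 1,2$. Writing $P := P_\Phi(g)$ and setting $\psi := \Delta_g - P\tau$, the first step is to show that each $\mu_j$ lifts to a $\sigma$-invariant (possibly infinite, sigma-finite) measure $\nu_j$ on $\Sigma$ with $\int \tau \, {\rm d}\nu_j < \infty$, and that the equilibrium condition for the flow translates, via Abramov's formula (Proposition \ref{prop:Abr}) and Savchenko's extension (Proposition \ref{prop:Sav}) together with the relation $\int_Y g \, {\rm d}R(\nu) = \int \Delta_g \, {\rm d}\nu / \int \tau \, {\rm d}\nu$, into the statement
\[
h_\sigma(\nu_j) + \int (\Delta_g - P\tau) \, {\rm d}\nu_j = 0 = \int \bigl( P_\sigma(\psi) - \psi\bigr) \, {\rm d}\nu_j,
\]
after one checks that $P_\sigma(\psi) = 0$ (which is forced in the cases where equilibrium measures exist, as recorded in Theorem \ref{thm:flow eqstate}). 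Thus each $\nu_j$ is a measure achieving equality in the infinite-measure variational principle, Theorem \ref{thm:infiniteRPF}.

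The second step is to invoke the uniqueness conclusions already stated on the base. In the positive recurrent case where the RPF measure is finite, each $\nu_j$ is an equilibrium measure for $\psi$ in the usual sense, and uniqueness follows from the Buzzi--Sarig result (at most one equilibrium measure for a summable-variation potential bounded above) combined with Proposition \ref{prop:uniq-rpf} on uniqueness of the RPF-measure. In the null recurrent case, where the relevant $\nu_j$ is an infinite RPF-measure, the crucial input is the final sentence of Theorem \ref{thm:infiniteRPF}(b): the RPF-measure $\nu = hm$ is the \emph{unique} measure (up to a multiplicative constant) with a finite-entropy generator satisfying $h_\sigma(\nu) = \int(P_\sigma(\psi) - \psi) \, {\rm d}\nu$. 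Since $\nu_1$ and $\nu_2$ both satisfy this identity and both have finite entropy per unit $\tau$-time (because the flow entropy $h_\Phi(\mu_j)$ is finite and $\int \tau \, {\rm d}\nu_j < \infty$), they must coincide up to scaling; normalising through $R$ then gives $\mu_1 = \mu_2$.

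The main obstacle, and the step requiring the most care, is establishing that the generator-finiteness hypothesis of Theorem \ref{thm:infiniteRPF}(b) genuinely applies to $\nu_j$, and more fundamentally that $\nu_j$ is \emph{conservative and ergodic} as required by that theorem. Ergodicity of $\nu_j$ on the base is not automatic from ergodicity of $\mu_j$ under the flow, but it can be recovered here: since $\mu_j = R(\nu_j)$ is ergodic and $\int \tau \, {\rm d}\nu_j < \infty$, standard Ambrose--Kakutani theory ties ergodicity of the flow to that of the base, and conservativity is supplied for free by Remark \ref{rmk:conser} via the Hopf condition \eqref{eq:Hopf cond}. The subtlety that remains is purely bookkeeping: one must verify that $\int \psi \, {\rm d}\nu_j$ is well-defined (i.e.\ $\int \Delta_g \, {\rm d}\nu_j > -\infty$, guaranteed by $-\int_Y g \, {\rm d}\mu_j < \infty$) so that the variational identity is not a meaningless $\infty - \infty$, and that the finiteness of $h_\sigma(\nu_j)$ (equivalently, by Corollary \ref{cor:ent inf}, the finiteness of $h_\Phi(\mu_j)$, which holds since $\Phi$ has finite entropy) is exactly the generator condition needed. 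Once these regularity points are dispatched, uniqueness on the base transfers back to the flow through the injectivity of $R$ on its domain, completing the argument.
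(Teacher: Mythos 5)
Your proposal is correct and follows essentially the same route as the paper: write each equilibrium measure as the normalisation of $\nu\times m$ for a (possibly infinite) $\sigma$-invariant $\nu$ with $\int\tau~{\rm d}\nu<\infty$, observe that existence of an equilibrium state forces $P_\sigma(\Delta_g-P_\Phi(g)\tau)=0$ and recurrence, and then invoke the uniqueness clause of Theorem~\ref{thm:infiniteRPF}(b) to conclude that $\nu$ is unique up to rescaling, hence $\mu$ is unique. The paper's proof is a terser version of exactly this argument; your extra verifications (conservativity, ergodicity, integrability bookkeeping, the finite-entropy generator condition, and the separate treatment of the positive recurrent case via Buzzi--Sarig and Proposition~\ref{prop:uniq-rpf}) are the details it leaves implicit.
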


\begin{proof}
If $g$ has an equilibrium state $\mu$ then $P_{\sigma}(\Delta_g -P(g)\tau)=0$,
the potential $\Delta_g -P(g)\tau$ is recurrent, and we can express $\mu$ as $\mu= (\nu \times m)_Y/((\nu \times m) (Y))$ for some $\sigma$-invariant $\nu$.  Theorem~\ref{thm:infiniteRPF} then implies that $\nu$ is unique up to rescaling.  Hence $\mu$ is the unique equilibrium state for $g$.
\end{proof}

Simplifying to the case of the potential which is constant zero, and the corresponding measures of maximal entropy, we have the following.

\begin{coro}[Measures of maximal entropy] \label{cor:meas_max}
Let $\Phi$ be a finite entropy suspension semi-flow on $Y$ defined over a countable Markov shift  $(\Sigma, \sigma)$   with locally H\"older  roof function $\tau$ satisfying  \eqref{eq:Hopf cond}.
\begin{enumerate}[label=({\alph*}),  itemsep=0.0mm, topsep=0.0mm, leftmargin=7mm]
\item If $P_\sigma(-h(\Phi) \tau)=0$ and $-h(\Phi) \tau$ is positive recurrent with equilibrium measure $\nu$ satisfying $\int \tau ~{\rm d} \nu < \infty$ then there exists  a unique measure of maximal entropy.
\item   If $P_\sigma(-h(\Phi) \tau)=0$ and the potential $h(\Phi) \tau$ is null recurrent with infinite RPF measure $\nu$  and  $\int \tau ~{\rm d} \nu < \infty$ then there exists a unique measure of maximal entropy.
   \end{enumerate}
In any other case the flow  does not have a measure of maximal entropy.
\end{coro}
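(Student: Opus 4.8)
The plan is to deduce this corollary directly from Theorem~\ref{thm:flow eqstate} and Theorem~\ref{thm:uni} by specialising to the constant potential $g \equiv 0$. First I would record the two elementary identifications that drive the reduction. Since $\Delta_g(x) = \int_0^{\tau(x)} g(x,t)\, {\rm d}t$, the choice $g \equiv 0$ gives $\Delta_g \equiv 0$, which is trivially locally H\"older; and from our definitions $P_\Phi(0) = h(\Phi)$. Consequently the base potential appearing throughout Theorem~\ref{thm:flow eqstate}, namely $\Delta_g - P_\Phi(g)\tau$, becomes exactly $-h(\Phi)\tau$. Moreover a measure $\mu \in \mathcal{E}_\Phi$ is an equilibrium measure for $g \equiv 0$ precisely when $h(\Phi) = P_\Phi(0) = h_\Phi(\mu) + \int 0\, {\rm d}\mu = h_\Phi(\mu)$, that is, precisely when $\mu$ is a measure of maximal entropy. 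Thus the existence of a measure of maximal entropy is equivalent to the existence of an equilibrium measure for $g \equiv 0$.

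With these identifications in hand, cases (a) and (b) of the corollary are literally cases (a) and (b) of Theorem~\ref{thm:flow eqstate} under the substitution $\Delta_g - P_\Phi(g)\tau = -h(\Phi)\tau$, so each produces an equilibrium measure for $g \equiv 0$, i.e., a measure of maximal entropy; uniqueness follows at once from Theorem~\ref{thm:uni} applied to $g \equiv 0$. For the non-existence assertion I would invoke the non-existence cases (i)--(iv) of Theorem~\ref{thm:flow eqstate}, again with $\Delta_g - P_\Phi(g)\tau = -h(\Phi)\tau$.

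The only point requiring a genuine argument, rather than pure substitution, is that (a), (b) and (i)--(iv) really do exhaust every possibility, so that ``any other case'' is covered. Here I would use the characterisation $h(\Phi) = P_\Phi(0) = \inf\{t : P_\sigma(-t\tau) \le 0\}$ from Theorem~\ref{thm: flow pres}, together with the fact that $t \mapsto P_\sigma(-t\tau)$ is decreasing and continuous where finite, to conclude that $P_\sigma(-h(\Phi)\tau) \le 0$. Hence only the sub-cases $P_\sigma(-h(\Phi)\tau) = 0$ and $P_\sigma(-h(\Phi)\tau) < 0$ occur: the strictly negative case is (i), while in the vanishing case the recurrence trichotomy (positive recurrent, null recurrent, transient), refined by whether $\int \tau\, {\rm d}\nu$ is finite or infinite, splits exactly into (a), (b) and (ii), (iii), (iv). Since this is essentially bookkeeping, I anticipate no substantive obstacle; the only mild care needed is confirming that in the boundary case $P_\sigma(-h(\Phi)\tau)=0$ the relevant RPF or equilibrium measure $\nu$ is well defined, which is precisely what the recurrence hypotheses in (a) and (b) provide.
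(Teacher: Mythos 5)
Your proposal is correct and takes exactly the paper's route: the corollary is obtained there by specialising Theorem~\ref{thm:flow eqstate} and Theorem~\ref{thm:uni} to the constant potential $g\equiv 0$, using $\Delta_g\equiv 0$ and $P_\Phi(0)=h(\Phi)$ so that equilibrium measures for $g\equiv 0$ coincide with measures of maximal entropy. Your extra exhaustiveness check is harmless but not strictly needed, since Theorem~\ref{thm:flow eqstate} already asserts non-existence of equilibrium measures in ``any other case.''
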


To prove Theorem \ref{thm:flow eqstate}, we require two lemmas.

\begin{lema} \label{lem:neg base}
If $P_\sigma(\Delta_g -P_{\Phi}(g) \tau)<0$ then there are no equilibrium measures for $g$.\end{lema}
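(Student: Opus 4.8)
The plan is to assume, for a contradiction, that $g$ admits an equilibrium measure $\mu\in\mathcal{E}_\Phi$, and to derive a contradiction with the hypothesis $P_\sigma(\phi)<0$, where I abbreviate $\phi:=\Delta_g-P_\Phi(g)\tau$. First I would write $\mu=(\nu\times m)|_Y/(\nu\times m)(Y)$ for an ergodic, conservative $\sigma$-invariant measure $\nu$ (finite, or $\sigma$-finite and infinite) with $\int\tau\,{\rm d}\nu<\infty$; conservativity is automatic by Remark~\ref{rmk:conser}. Since the flow has finite entropy and $\mu$ is an equilibrium measure, $h_\Phi(\mu)<\infty$, so $h_\sigma(\nu)<\infty$. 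Combining Abramov's formula (Proposition~\ref{prop:Abr}, or Proposition~\ref{prop:Sav} in the infinite case) with the integral relation $\int_Y g\,{\rm d}\mu=\int\Delta_g\,{\rm d}\nu/\int\tau\,{\rm d}\nu$, the equilibrium identity $P_\Phi(g)=h_\Phi(\mu)+\int g\,{\rm d}\mu$ rearranges to
\[
h_\sigma(\nu)+\int\phi\,{\rm d}\nu=0,
\]
so $\int\phi\,{\rm d}\nu=-h_\sigma(\nu)\in(-\infty,0]$ is finite.

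Next I would induce on a cylinder $C_a$ with $\nu(C_a)>0$, passing to the full shift $(\overline\Sigma,\overline\sigma)$ with induced potential $\overline\phi$ of \eqref{eq:overline phi}, first return time $r_a$, and the \emph{normalised} induced probability measure $\widehat\nu:=\nu|_{C_a}/\nu(C_a)$. The crucial point, which handles the finite and infinite cases simultaneously, is to convert between base and induced quantities using the finite factor $\nu(C_a)$ rather than the expected return time $\int r_a\,{\rm d}\widehat\nu$ (which is infinite precisely when $\nu$ is infinite). By Kac's unfolding and Abramov's formula for the Krengel entropy (Remark~\ref{rmk:Kac and Ab} in the finite case, its infinite analogue from Proposition~\ref{prop:Sav} otherwise) one has $\int\phi\,{\rm d}\nu=\nu(C_a)\int\overline\phi\,{\rm d}\widehat\nu$ and $h_\sigma(\nu)=\nu(C_a)\,h_{\overline\sigma}(\widehat\nu)$, where both $h_{\overline\sigma}(\widehat\nu)$ and $\int\overline\phi\,{\rm d}\widehat\nu$ are finite. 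Dividing the displayed identity by $\nu(C_a)>0$ gives
\[
h_{\overline\sigma}(\widehat\nu)+\int\overline\phi\,{\rm d}\widehat\nu=0 .
\]

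Finally I would introduce $\Psi:=\overline{\phi-P_\sigma(\phi)}=\overline\phi-P_\sigma(\phi)\,r_a$ on the full shift. Since $\phi-P_\sigma(\phi)$ has zero Gurevich pressure, its induced potential satisfies $P_{\overline\sigma}(\Psi)\le 0$ (the fact cited in the proof of Theorem~\ref{thm:var-non-sup}), and $\Psi$ has summable variations. The measure $\widehat\nu$ is admissible for the variational principle because $-\int\Psi\,{\rm d}\widehat\nu=h_{\overline\sigma}(\widehat\nu)+P_\sigma(\phi)\int r_a\,{\rm d}\widehat\nu\le h_{\overline\sigma}(\widehat\nu)<\infty$, so Theorem~\ref{thm:var-non-sup} applied on $(\overline\Sigma,\overline\sigma)$ yields $h_{\overline\sigma}(\widehat\nu)+\int\Psi\,{\rm d}\widehat\nu\le P_{\overline\sigma}(\Psi)\le 0$. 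On the other hand, using the boxed induced identity,
\[
h_{\overline\sigma}(\widehat\nu)+\int\Psi\,{\rm d}\widehat\nu
=\Bigl(h_{\overline\sigma}(\widehat\nu)+\int\overline\phi\,{\rm d}\widehat\nu\Bigr)-P_\sigma(\phi)\int r_a\,{\rm d}\widehat\nu
=-P_\sigma(\phi)\int r_a\,{\rm d}\widehat\nu\ge -P_\sigma(\phi)>0,
\]
since $r_a\ge 1$ forces $\int r_a\,{\rm d}\widehat\nu\ge 1$ and $P_\sigma(\phi)<0$. This contradicts the previous inequality, so no equilibrium measure for $g$ can exist.

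I expect the main obstacle to be the infinite-measure case: there the naive Abramov--Kac conversion through $\int r_a\,{\rm d}\widehat\nu=\infty$ produces an $\infty-\infty$ indeterminacy, and one cannot invoke the finite-measure variational principle for $\nu$ directly. The device that removes this obstruction is to carry out the entire comparison on the induced \emph{full} shift, where $\widehat\nu$ is a genuine probability measure and $\Psi$ has finite pressure, and to normalise by $\nu(C_a)$ rather than by the (possibly infinite) mean return time; this keeps every quantity finite while still exposing the strictly positive contribution $-P_\sigma(\phi)\int r_a\,{\rm d}\widehat\nu$ that collides with $P_{\overline\sigma}(\Psi)\le 0$. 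A secondary point requiring care is verifying the admissibility condition $-\int\Psi\,{\rm d}\widehat\nu<\infty$ and confirming that the cited facts $P_{\overline\sigma}(\Psi)\le 0$ and the Krengel-entropy Abramov formula apply uniformly across the recurrent and transient subcases.
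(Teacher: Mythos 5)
Your proof is correct, but it takes a genuinely different route from the paper's. The paper splits into two cases according to whether the base measure $\nu$ is a probability measure or an infinite measure: in the finite case it applies the variational principle on the base shift directly to $\phi=\Delta_g-P_\Phi(g)\tau$, obtaining $h_\sigma(\nu)+\int\phi~{\rm d}\nu\le P_\sigma(\phi)<0$ and hence a strict pressure gap for the flow; in the infinite case it derives exactly your identity $h_\sigma(\nu)+\int\phi~{\rm d}\nu=0$ and then invokes Sarig's variational inequality for infinite conservative measures (\cite[Theorem 2]{Sar01}, restated as Theorem~\ref{thm:infiniteRPF}(a)) to force $P_\sigma(\Delta_g-P_\Phi(g)\tau)=0$, contradicting the hypothesis. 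You instead treat both cases at once by inducing: you transfer the identity to the induced full shift using the factor $\nu(C_a)$ (Kac unfolding plus the Abramov--Krengel entropy formula), and then play the ordinary variational principle on $(\overline\Sigma,\overline\sigma)$ (Theorem~\ref{thm:var-non-sup}) against the fact that a potential of zero Gurevich pressure induces to one of non-positive pressure --- precisely the fact quoted in the paper's proof of Theorem~\ref{thm:var-non-sup}. The paper's route buys brevity, since the infinite-measure theorem of Sarig is already available as a black box; your route buys uniformity and keeps every object finite (probability measures, finite free energies, finite induced pressure), so the infinite-measure variational principle is never needed, and the argument sits naturally alongside the paper's own inducing machinery (Lemma~\ref{lem:indu-pre flow}). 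Two caveats worth fixing: the conversions $h_\sigma(\nu)=\nu(C_a)\,h_{\overline\sigma}(\widehat\nu)$ and $\int\phi~{\rm d}\nu=\nu(C_a)\int\overline\phi~{\rm d}\widehat\nu$ in the infinite case should be attributed to Kac's theorem and Krengel's entropy formula for conservative ergodic measures (the notion of entropy used in \cite{Sar01}), not to Proposition~\ref{prop:Sav}, which concerns the flow; and your argument needs a cylinder with $0<\nu(C_a)<\infty$, i.e.\ $\nu$ finite on cylinders --- but this is a technicality shared with the paper, whose appeal to \cite[Theorem 2]{Sar01} carries the same hypothesis, rather than a defect of your approach.
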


\begin{proof}
We will show that for any measure $\mu \in \M_{\Phi}$ we have
$h_\Phi(\mu) + \int g ~{\rm d} \mu<P_\Phi(g)$. Assume first that $\mu= R(\nu)$ where
$\nu \in \M_{\sigma}$,  i.e., $\nu$ is a probability measure. Since $P_\sigma(\Delta_g -P_{\Phi}(g) \tau)<0$,
Theorem~\ref{thm:flow VP} implies
\begin{eqnarray*}
h_\sigma(\nu) + \int \Delta_g ~{\rm d}\nu -  P_{\Phi}(g) \int \tau ~{\rm d} \nu <0,
\end{eqnarray*}
thus
\begin{eqnarray*}
\frac{h_\sigma(\nu)}{\int \tau ~{\rm d} \nu}  + \frac{\int \Delta_g ~{\rm d} \nu}{{\int \tau ~{\rm d} \nu}} =
h_\Phi(\mu) + \int g ~{\rm d}\mu < P_{\Phi}(g).
\end{eqnarray*}
Therefore, no measure $\mu \in \M_{\Phi}$ of the form $\mu= R(\nu)$, where
$\nu \in \M_{\sigma}$, can be an equilibrium measure for $g$.

Let us assume now that
 $\mu= R(\nu)$ where $\nu$ is an infinite invariant measure such that $\int \tau~{\rm d}Ê\nu < \infty$.  Note that since the flow has finite entropy, $h_\Phi(\mu)=h_\Phi(R(\nu)) < \infty$, so Abramov's formula implies that $h_\sigma(\nu) < \infty$.
 Assume by way of contradiction that the measure $\mu$ is an equilibrium measure for $g$. In particular, since $h_\Phi(\mu)< \infty$, this implies that $\int g ~{\rm d} \mu <\infty$. Since  $\tau \in L^1(\nu)$  we have $\Delta_g \in L^1(\nu)$ and
 \begin{equation*}
 P_{\Phi}(g)= h_\Phi(\mu) + \int g ~{\rm d} \mu= \frac{h_\sigma(\nu)}{\int \tau ~{\rm d} \nu} + \frac{\int \Delta_g ~{\rm d}\nu}{\int \tau ~{\rm d} \nu}.
  \end{equation*}
This implies,
\begin{equation} \label{eq:ecua}
h_\sigma(\nu) + \int \Delta_g ~{\rm d} \nu -P_{\Phi}(g) \int \tau ~{\rm d} \nu =0.
\end{equation}
 A direct application of  \cite[Theorem 2]{Sar01} gives
\begin{equation*}
h_\sigma(\nu) \leq \int \Big( P_\sigma(\Delta_g  -P_{\Phi}(g)\tau) - \Delta_g   + P_{\Phi}(g)\tau   \Big)~{\rm d} \nu
\end{equation*}
However, since by  \eqref{eq:ecua},
\begin{equation*}
h_\sigma(\nu) = -\int \left( \Delta_g \ + P_{\Phi}(g)  \tau \right) ~{\rm d} \nu,
\end{equation*}
we obtain
\begin{equation*}
P_\sigma(\Delta_g  -P_{\Phi}(g)\tau) =0.
\end{equation*}
This contradiction proves the statement.
\end{proof}

\begin{lema}
If $P_\sigma(\Delta_g -P_{\Phi}(g) \tau)=0$ and the potential $\Delta_g -P_{\Phi}(g) \tau$ is null-recurrent with corresponding infinite measure $\nu$ satisfying $\tau \in L^1(\nu)$ then
there exists an equilibrium measure for $g$.
\label{lem:null eq}
\end{lema}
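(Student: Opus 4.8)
The plan is to show that the natural candidate measure $\mu := (\nu\times m)|_Y/(\nu\times m)(Y)$ is itself the desired equilibrium measure. Since $\tau\in L^1(\nu)$, the normalising constant $(\nu\times m)(Y)=\int\tau\, {\rm d}\nu$ is finite, so (by case (3) of the discussion of the map $R$) $\mu$ is a genuine $\Phi$-invariant probability measure; moreover $\mu$ is ergodic, because $\nu$, being the RPF measure of a recurrent potential, is conservative and ergodic, and a suspension over an ergodic base with integrable roof remains ergodic. Writing $\phi:=\Delta_g-P_\Phi(g)\tau$, the target identity is $h_\Phi(\mu)+\int g\, {\rm d}\mu=P_\Phi(g)$.

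Granting for the moment the integrability $\int(-\phi)\, {\rm d}\nu<\infty$, the computation is short. Savchenko's formula (Proposition~\ref{prop:Sav}, using ergodicity of $\mu$) gives $h_\Phi(\mu)=h_\sigma(\nu)/\int\tau\, {\rm d}\nu$, and the integral relation for $R(\nu)$ gives $\int g\, {\rm d}\mu=\int\Delta_g\, {\rm d}\nu/\int\tau\, {\rm d}\nu$, so that
\[
h_\Phi(\mu)+\int g\, {\rm d}\mu=\frac{h_\sigma(\nu)+\int\Delta_g\, {\rm d}\nu}{\int\tau\, {\rm d}\nu}.
\]
Since $P_\sigma(\phi)=0$, Theorem~\ref{thm:infiniteRPF}(b) applies to the infinite RPF measure $\nu$ and yields $h_\sigma(\nu)=\int(-\phi)\, {\rm d}\nu=-\int\Delta_g\, {\rm d}\nu+P_\Phi(g)\int\tau\, {\rm d}\nu$. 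Substituting, the numerator collapses to $P_\Phi(g)\int\tau\, {\rm d}\nu$, giving $h_\Phi(\mu)+\int g\, {\rm d}\mu=P_\Phi(g)$; the same identity shows $\int g\, {\rm d}\mu$ is finite, so $\mu$ lies in the admissible class for the variational principle and is therefore an equilibrium measure.

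The crux, and the step I expect to be the main obstacle, is verifying the hypothesis $\int(-\phi)\, {\rm d}\nu<\infty$ of Theorem~\ref{thm:infiniteRPF}(b) (equivalently, that $\Delta_g$ is not too negative on $\nu$-average). I would extract this from the standing finite-entropy assumption by inducing on a cylinder $C_a$. The induced potential $\overline\phi$ is locally H\"older on the resulting full shift, has $P(\overline\phi)=0$ (the recurrent case), and by Lemma~\ref{lem:finite} satisfies $\sup\overline\phi<\infty$; its Gibbs/RPF measure is the lift $\overline\nu$ of $\nu$. Viewing $(Y,\Phi)$ as a suspension over the induced full shift with roof $\overline\tau=\sum_{k=0}^{r_a-1}\tau\circ\sigma^k$, for which $\int\overline\tau\, {\rm d}\overline\nu=\int\tau\, {\rm d}\nu<\infty$ and whose associated flow measure is again $\mu$, the finite-measure Abramov formula (Proposition~\ref{prop:Abr}) gives $h_\Phi(\mu)=h_{\overline\sigma}(\overline\nu)/\int\overline\tau\, {\rm d}\overline\nu$, so the hypothesis $h_\Phi(\mu)\le h(\Phi)<\infty$ forces $h_{\overline\sigma}(\overline\nu)<\infty$. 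Since $\overline\nu$ is then the equilibrium measure of $\overline\phi$ with finite entropy, $\int\overline\phi\, {\rm d}\overline\nu=-h_{\overline\sigma}(\overline\nu)$ is finite, and the transfer identity $\int\phi\, {\rm d}\nu=\int\overline\phi\, {\rm d}\overline\nu$ then delivers $\int(-\phi)\, {\rm d}\nu<\infty$. The delicate points to check carefully are the identification of $\overline\nu$ with the RPF measure of $\overline\phi$, and the validity of this transfer for the infinite measure $\nu$; the latter holds for $\tau$ and $\Delta_g$ by a Tonelli argument \emph{even though} the return time $r_a$ itself is not $\overline\nu$-integrable in the null-recurrent case, which is precisely why the standard statement of Kac's formula in Remark~\ref{rmk:Kac and Ab} does not apply directly here.
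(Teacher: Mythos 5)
Your core computation coincides with the paper's proof: both hinge on Theorem~\ref{thm:infiniteRPF}(b) applied to $\phi:=\Delta_g-P_\Phi(g)\tau$, followed by Savchenko's formula (Proposition~\ref{prop:Sav}) and the relation $\int g\,{\rm d}\mu=\int\Delta_g\,{\rm d}\nu/\int\tau\,{\rm d}\nu$, and you correctly isolate the crux, namely the hypothesis $\int(-\phi)\,{\rm d}\nu<\infty$ of Theorem~\ref{thm:infiniteRPF}(b). (The paper obtains this integrability by citing the argument inside Lemma~\ref{lem:neg base}; your instinct to argue it independently is reasonable, since there it is extracted under the contradiction hypothesis that $\mu$ is already an equilibrium measure.) The genuine gap is the final ``transfer'' step of your inducing argument. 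The Hopf--Kac decomposition $\int f\,{\rm d}\nu=\nu(C_a)\int\bigl(\sum_{k=0}^{r_a-1}f\circ\sigma^k\bigr)\,{\rm d}\overline\nu$, valid by Tonelli for $f\ge 0$, transfers integrability \emph{from} the original system \emph{to} the induced one: applying it to $\phi^{\pm}$ shows that $\int\phi^-\,{\rm d}\nu<\infty$ implies $\int(\overline\phi)^-\,{\rm d}\overline\nu<\infty$. You need the converse, and it does not follow: pointwise one only has $(\overline\phi)^-\le\sum_{k<r_a}\phi^-\circ\sigma^k$, and the discrepancy --- cancellation between positive and negative values of $\phi$ within a single excursion --- is uncontrolled. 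You cannot absorb it into a term $C\,r_a$ either, because precisely in the null recurrent case $\int r_a\,{\rm d}\overline\nu=\infty$; and since $g$ is not assumed bounded there is no pointwise bound $|\Delta_g|\le C\tau$ to fall back on. So finiteness of $\int\overline\phi\,{\rm d}\overline\nu$ does not yield $\int(-\phi)\,{\rm d}\nu<\infty$, and the hypothesis of Theorem~\ref{thm:infiniteRPF}(b) remains unverified.

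The repair is to stop short of returning to $\nu$: what you established on the induced level already closes the proof. You have that $\overline\nu$ is the Gibbs/RPF measure of $\overline\phi$ on a full shift with $P_{\overline\sigma}(\overline\phi)=0$, that $\int\overline\tau\,{\rm d}\overline\nu<\infty$, that the suspension of $(\overline\nu,\overline\tau)$ is $\mu$, and that $h_{\overline\sigma}(\overline\nu)<\infty$ by Proposition~\ref{prop:Abr}. For a Gibbs measure with zero pressure, finite entropy forces $\int\overline\phi\,{\rm d}\overline\nu>-\infty$ and then $h_{\overline\sigma}(\overline\nu)=-\int\overline\phi\,{\rm d}\overline\nu$; note that this implication relies on the Gibbs property (it is false for general invariant measures), so it should be justified or cited rather than obtained by simply calling $\overline\nu$ ``the equilibrium measure''. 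Consequently $\overline{\Delta_g}=\overline\phi+P_\Phi(g)\overline\tau\in L^1(\overline\nu)$, and Remark~\ref{rem:kac-flow} together with Proposition~\ref{prop:Abr} gives
\[
h_\Phi(\mu)+\int g\,{\rm d}\mu=\frac{h_{\overline\sigma}(\overline\nu)+\int\overline{\Delta_g}\,{\rm d}\overline\nu}{\int\overline\tau\,{\rm d}\overline\nu}=\frac{P_\Phi(g)\int\overline\tau\,{\rm d}\overline\nu}{\int\overline\tau\,{\rm d}\overline\nu}=P_\Phi(g),
\]
with no $\nu$-integrals required; this is in essence the second half of Lemma~\ref{lem:indu-pre flow}. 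With this rerouting your argument becomes complete, and it is in fact more careful than the paper's own proof on the points of ergodicity of $\mu$ and the admissibility condition $-\int g\,{\rm d}\mu<\infty$.
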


\begin{proof}
In the proof of Lemma \ref{lem:neg base} we showed that if $\nu$ is the infinite RPF measure  associated to  $\Delta_g -P_{\Phi}(g) \tau$ satisfying $\tau \in L^1(\nu)$, and the flow is of finite entropy, then $\Delta_g \in  L^1(\nu)$. It is a consequence of
\cite[Theorem 2]{Sar01} that
\begin{equation*}
h_\sigma(\nu)= \int \Big( P_\sigma(\Delta_g  -P_{\Phi}(g)\tau) - \Delta_g   + P_{\Phi}(g)\tau   \Big) ~{\rm d} \nu.
\end{equation*}
Since $P(\Delta_g  -P_{\Phi}(g)\tau)=0$ we obtain
\begin{equation*}
h_\sigma(\nu) =- \int \Delta_g ~{\rm d} \nu  +P_{\Phi}(g) \int \tau~{\rm d}\nu.
\end{equation*}
That is
\begin{equation*}
P_{\Phi}(g) = \frac{h_\sigma(\nu)}{\int \tau ~{\rm d} \nu} + \frac{\int \Delta_g ~{\rm d} \nu}{\int \tau ~{\rm d} \nu}.
\end{equation*}
Therefore, for $\mu= R(\nu) \in \M_{\Phi}$,
\begin{equation*}
P_{\Phi}(g) = h_\Phi(\mu)+ \int g ~{\rm d} \mu,
\end{equation*}
so $\mu$ is an equilibrium measure for $g$, as required.
\end{proof}

\begin{proof}[Proof of  Theorem \ref{thm:flow eqstate}]
Case (a) of the theorem follows from \cite[Theorem 4]{bi1}.  Case (b) of the theorem follows by Lemma~\ref{lem:null eq}.

To complete the proof, we will show that cases (a) and (b) are the only cases in which there is an equilibrium measure for $g$.  Suppose that $g$ has a finite equilibrium measure $\mu$.   Then $\mu$ is of the form $R(\nu)$ where $\nu$ is a $\sigma$-invariant measure, which can be either finite or infinite and $\int\tau~{\rm d}\nu<\infty$. If $\nu$ is finite then it is an equilibrium measure for $\Delta_g-P_{\Phi}(g)\tau$ and $P(\Delta_g-P_{\Phi}(g)\tau)=0$. Thus $\Delta_g-P_{\Phi}(g)\tau$ is either recurrent or positive recurrent and so we are in case (a) or (b). If $\nu$ is infinite then it satisfies the variational principle for invariant measures (see Theorem \ref{thm:infiniteRPF}) and is a fixed point for $L_{\Delta_g-P_{\Phi}(g)\tau}^*$ and thus $\Delta_g-P_{\Phi}(g)\tau$ is null recurrent, as in case (b).
\end{proof}

\begin{rem}
Let $\mu$ be an equilibrium measure for the potential $g$ which can be written as
$\mu= R(\nu)$, where $\nu$ is the equilibrium measure for $\Delta_g -P_{\Phi}(g) \tau$. Techniques developed by Melbourne and T\"or\"ok \cite{mt} to obtain statistical limit theorems can be applied in this setting.  Indeed, let $\psi:Y \to \R$ be a zero mean potential, that is $\int \psi ~{\rm d}\mu =0$. Assume that $\tau \in L^a(\nu)$ and that $\psi \in L^b(\mu)$ with
\[\left( 1- \frac{1}{a} \right)\left( 1- \frac{1}{b} \right)\geq \frac{1}{2}.\]
 If $\Delta_{\psi}$ and $\tau$ satisfy the Central Limit Theorem then $\psi$ satisfies the Central Limit Theorem.
\end{rem}

\section{Inducing schemes} \label{sec:ind}
In this section we study the inducing procedure in the context of flows (compare to the discrete case in Section~\ref{ssec:induce discr}). While the inducing scheme theory is well developed in the context of maps, in our context has not been thoroughly studied. However, inducing was used in \cite[Section 6.2]{ij} to establish the existence of phase transitions for topologically mixing suspension flows and potentials satisfying certain growth conditions.   We will use the technique later to prove results on the recurrence properties of semi-flows as well as to help us understand specific examples.

Note that not only we obtain a  better combinatorial structure  when inducing, but also  if $\inf\{\tau(x):x\in C_a\}>0$ then the induced roof function $\overline{\tau}$ will be bounded away from zero.

We have already seen (see Remark \ref{rmk:Kac and Ab} ) that the integral of a potential with respect to a measure and the integral of the induced potential with respect to the lifted measure are related by the Kac formula. In the context of flows we also remark the following.

\begin{rem}[Kac's formula on flows] \label{rem:kac-flow}
Let $\nu \in \M_{\sigma}$ be ergodic and satisfy that $\nu(C_a)>0$. Denote by $\overline{\nu} \in \M_{\overline{\sigma}}$ its lift to the induced system as in \eqref{eq:proj meas}. Let $\phi:\Sigma \to \R$ be a potential of summable variations and let $\overline{\phi}$ be the corresponding induced potential as in \eqref{eq:overline phi}. The  case $\int r_a ~{\rm d} \overline{\nu} < \infty$  was considered in Remark \ref{rmk:Kac and Ab}. Note that if $\overline{\nu}\in \M_{\overline{\sigma}}$ with $\int\overline{\tau}~{\rm d}\overline\nu<\infty$ but $\int r_a~{\rm d}\overline{\nu}=\infty$ then the projection of $\nu$ is an infinite $\sigma$-invariant measure but $\mu=R(\nu)$ will be a finite $\Phi$-invariant measure. For a potential $g:Y\to\R$ we have
$$\int g~{\rm d}\mu=\frac{\int\overline{\Delta_g}~{\rm d}\overline{\nu}}{\int\overline{\tau}~{\rm d}\overline{\nu}}.$$
\end{rem}

Note that inducing corresponds to choosing a different base map to suspend the flow over. This can be thought of as considering a different cross-section to the flow.

The first important remark that we will make in this context is that the pressure for the flow can be computed using the induced system. This fact was implicit in \cite[Lemma 6.1]{ij}.
 Moreover,  if $\inf\{\tau(x):x\in C_a\}>0$ then the existence of equilibrium measures can be determined in the induced system.

 \begin{lema} \label{lem:indu-pre flow}
Let  $(Y, \Phi)$ be a suspension semi-flow defined over  a topologically mixing countable Markov shift $(\Sigma , \sigma)$ with a locally H\"older roof function $\tau$. Let $g:Y \to \R$ be such that $\Delta_g$ is locally H\"older.  Then
\begin{equation*}
P_{\Phi}(g) = \inf \left\{ s \in \R :P_{\overline{\sigma}}(\overline{\Delta_g- s\tau})\leq 0  \right\}.
\end{equation*}
Moreover if $\inf\{\tau(x):x\in C_a\}>0$ then $g$ has an equilibrium measure if and only if $P_{\overline{\sigma}}(\overline{\Delta_g-P_{\Phi}(g)\tau})=0$ and $\overline{\Delta_g-P_{\Phi}(g)\tau}$ has an equilibrium measure with respect to which $\overline{\tau}$ is integrable.
\end{lema}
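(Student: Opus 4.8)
The plan is to split the argument into the two asserted statements and, in both, to transfer the problem from the original semi-flow to its induced counterpart, exploiting the fact that inducing a suspension flow amounts to changing the cross-section. Throughout I would write $\psi_s := \Delta_g - s\tau$ and use that inducing is linear on potentials, so that $\overline{\Delta_g - s\tau} = \overline{\Delta_g} - s\overline\tau$; moreover the geometry of the return map gives $\Delta_{\overline g} = \overline{\Delta_g}$, where $\overline g$ denotes $g$ read in the coordinates of the suspension over $(\overline\Sigma,\overline\sigma)$ with roof $\overline\tau$.

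For the pressure formula, first I would invoke Theorem~\ref{thm: flow pres}, which already gives $P_\Phi(g) = \inf\{s\in\R : P_\sigma(\psi_s)\le 0\}$. It then suffices to show that the two sublevel sets coincide, i.e.\ that for every fixed $s$ one has $P_\sigma(\psi_s)\le 0$ if and only if $P_{\overline\sigma}(\overline{\psi_s})\le 0$. Since $\Delta_g$ and $\tau$ are locally H\"older, each $\psi_s$ has summable variation and $\overline{\psi_s}$ satisfies $V_1<\infty$ and $V_n\to0$, so the relevant pressures are defined. The required equivalence is the sign-preservation statement of Sarig's discrete induced-pressure theory (as recorded in \cite{Sar01,Sar01a} and used in \cite[Lemma~6.1]{ij}): for a summable-variation potential $\psi$ on a topologically mixing countable Markov shift, $P_\sigma(\psi)$ and $P_{\overline\sigma}(\overline\psi)$ are simultaneously negative, simultaneously zero, or simultaneously positive, with the sole caveat that $P_\sigma(\psi)=0$ with $\psi$ transient corresponds to $P_{\overline\sigma}(\overline\psi)<0$, which still respects the $\le 0$ threshold. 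As $\tau>0$, both maps $s\mapsto P_\sigma(\psi_s)$ and $s\mapsto P_{\overline\sigma}(\overline{\psi_s})$ are decreasing, so the equality of the two sublevel sets yields equality of their infima, proving the first claim.

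For the \emph{moreover} part I would use the hypothesis $\inf\{\tau(x):x\in C_a\}>0$, which forces $\overline\tau\ge \inf_{C_a}\tau>0$ since the first coordinate of any point of the induced system lies in $C_a$. Thus the induced semi-flow $\overline\Phi$ over $(\overline\Sigma,\overline\sigma)$ with roof $\overline\tau$ falls into the classical, roof-bounded-away-from-zero regime: by the Ambrose--Kakutani results recalled in Subsection~\ref{ssec:meas AK}, $R$ is a bijection between $\M_{\overline\sigma}(\overline\tau)$ and $\M_{\overline\Phi}$, so every $\overline\Phi$-invariant probability measure is $\overline\nu\times m$ for a genuine probability $\overline\nu$ with $\int\overline\tau\,{\rm d}\overline\nu<\infty$. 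Because $\Phi$ and $\overline\Phi$ are the same flow seen through different cross-sections, they are conjugate off a null set, $g$ and $\overline g$ agree under this identification, their flow entropies coincide, and (by the first part) $P_\Phi(g)=P_{\overline\Phi}(\overline g)$; hence $g$ has an equilibrium measure if and only if $\overline g$ does. For the latter I would compute, via Abramov's formula (Proposition~\ref{prop:Abr}) and the integral relation, that for $\overline\mu=\overline\nu\times m$ and $\overline\psi:=\overline{\Delta_g-P_\Phi(g)\tau}$,
\[
h_{\overline\sigma}(\overline\nu) + \int\overline\psi\,{\rm d}\overline\nu
= \left(\int\overline\tau\,{\rm d}\overline\nu\right)\Big(h_{\overline\Phi}(\overline\mu) + \int\overline g\,{\rm d}\overline\mu - P_\Phi(g)\Big).
\]
Since $P_{\overline\sigma}(\overline\psi)\le 0$ by the first part, while the variational principle (Theorem~\ref{thm:var-non-sup}) gives $P_{\overline\sigma}(\overline\psi)\ge h_{\overline\sigma}(\overline\nu)+\int\overline\psi\,{\rm d}\overline\nu$, this identity shows that $\overline\mu$ is an equilibrium measure for $\overline g$ exactly when the left-hand side vanishes together with $P_{\overline\sigma}(\overline\psi)=0$, that is, when $\overline\psi$ has pressure zero and $\overline\nu$ is an equilibrium measure for it with $\int\overline\tau\,{\rm d}\overline\nu<\infty$. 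Running the computation in both directions yields the stated equivalence.

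The main obstacle is the second part, and specifically the passage between equilibrium measures of $g$ on the original flow---whose base measure $\nu$ may be an \emph{infinite} $\sigma$-invariant measure when $\int r_a\,{\rm d}\overline\nu=\infty$ (the phenomenon of Remark~\ref{rem:kac-flow})---and the genuinely finite measure $\overline\nu$ on the induced shift. This is where the hypothesis $\inf_{C_a}\tau>0$ does the essential work: it makes $\overline\tau$ bounded away from zero, so that $\int\overline\tau\,{\rm d}\overline\nu<\infty$ forces $\overline\nu(\overline\Sigma)<\infty$, collapsing the troublesome infinite-measure case (b) of Theorem~\ref{thm:flow eqstate} and leaving only the positive-recurrent case on the induced system. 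Care is also needed to justify that the cross-section change is a measurable conjugacy off a null set---which holds because \eqref{eq:Hopf cond} and conservativity (Remark~\ref{rmk:conser}) guarantee that the forward orbit of almost every point meets $C_a$---so that entropies and integrals genuinely transfer between the two suspensions.
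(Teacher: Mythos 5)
Your treatment of the pressure formula is essentially the paper's: both arguments reduce to showing that the sets $\{s\in\R : P_{\sigma}(\Delta_g-s\tau)\le 0\}$ and $\{s\in\R : P_{\overline{\sigma}}(\overline{\Delta_g-s\tau})\le 0\}$ coincide. The paper proves the transfer of positivity in both directions by hand (take a compactly supported ergodic measure with positive free energy, lift or project it, and use the Abramov and Kac formulas), whereas you quote it as a known sign-preservation fact from \cite{Sar01,Sar01a,ij}; that part is fine. Your forward implication in the ``moreover'' statement (from an equilibrium measure $\overline{\mu}$ for the induced data with $\int\overline{\tau}~{\rm d}\overline{\mu}<\infty$ to an equilibrium measure for $g$) also matches the paper's computation.

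The genuine gap is in the converse direction: ``$g$ has an equilibrium measure $\Rightarrow$ $P_{\overline{\sigma}}(\overline{\Delta_g-P_{\Phi}(g)\tau})=0$ and the induced potential has an equilibrium measure with $\overline{\tau}$ integrable.'' You deduce this from the claim that the suspension over $(\overline{\Sigma},\overline{\sigma},\overline{\tau})$ is the same flow as $(Y,\Phi)$ up to a measurable conjugacy off a null set, justified by \eqref{eq:Hopf cond} and conservativity (Remark~\ref{rmk:conser}). That justification fails: the induced suspension only captures the part of $Y$ swept out by the cross-section $C_a\times\{0\}$, i.e.\ orbits whose base itinerary visits the symbol $a$. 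An ergodic $\Phi$-invariant probability measure can be supported entirely on the suspension over a subshift that never sees the symbol $a$ (e.g.\ over a fixed point $(b,b,b,\dots)$ with $b\ne a$); such a measure \emph{is} conservative, since conservativity plus ergodicity only gives recurrence to sets of \emph{positive} measure, and here the base measure gives $C_a$ measure zero. So ``off a null set'' can mean ``off a set of full measure'' for precisely the measures you need to control, and a candidate equilibrium measure for $g$ need not transfer to the induced flow at all; your equivalence ``$g$ has an equilibrium measure iff $\overline{g}$ does'' is unproven in the direction that matters. The paper closes this hole with structural input you do not use: by Theorem~\ref{thm:flow eqstate} together with Theorem~\ref{thm:RPF}, \emph{any} equilibrium measure for $g$ must be (up to normalisation) of the form $\nu\times m$ with $\nu$ the RPF measure of $\Delta_g-P_{\Phi}(g)\tau$; RPF measures are built from conformal measures and charge every cylinder, in particular $C_a$, so $\nu$ can be induced, and its lift $\overline{\nu}$ is an equilibrium measure for $\overline{\Delta_g-P_{\Phi}(g)\tau}$ with $\int\overline{\tau}~{\rm d}\overline{\nu}<\infty$. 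Without an argument of this type (or some other reason why an equilibrium measure must charge $C_a$ --- uniqueness, Theorem~\ref{thm:uni}, does not help by itself, since its proof rests on the same structure theorem), your proof of the converse does not go through.
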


\begin{proof}
By the definition of $P_{\Phi}(g)$ it suffices to show that
$$\sup\{s \in \R :P_{\sigma}(\Delta_g-s\tau )> 0\}=\sup\{s \in \R :P_{\overline{\sigma}}(\overline{\Delta_g- s\tau})>0\}.$$
To start let $t \in \R$ satisfy that $P_{\overline{\sigma}}(\overline{\Delta_g- t\tau})> 0$ and note that there must exist a compactly supported $\overline{\sigma}$-ergodic probability measure $\overline{\mu}$ such that
$$h_{\overline{\sigma}}(\overline{\mu})+\int \overline{\Delta_g}~{\rm d}\overline{\mu} -t \int \overline{\tau} ~{\rm d}\overline{\mu}>0.$$
By considering the projection of $\overline{\mu}$ and applying the variational principle we can deduce that $P_{\sigma}(\Delta_g -t\tau)>0$.

On the other hand if we fix  $t\in\R$ such that $P_{\sigma}(\Delta_g- t\tau)>0$ then we can find a compactly supported ergodic measure, $\mu$ such that $h_\sigma(\mu)+\int\Delta_g~{\rm d}\mu-t\int\tau~{\rm d}\mu>0$ and induce to observe that $P_{\overline{\sigma}}(\overline{\Delta_g-t\tau})>0$. The  first part of the result now follows.

To prove the second part  suppose that $P_{\overline{\sigma}}(\overline{\Delta_g-P_{\Phi}(g)\tau})=0$ and $\overline{\Delta_g-P_{\Phi}(g)\tau}$ has an ergodic equilibrium measure $\overline{\mu}$ such that $\int\overline\tau~{\rm d}\overline\mu<\infty$.
 Thus $\overline{\mu}$ can be pushed down to a (possibly infinite) $\sigma$-invariant measure $\mu$ where $\int\tau~{\rm d}\mu<\infty$ and
$$0=h_{\overline\sigma}(\overline{\mu})+\int\overline{\Delta_g-P_{\Phi}(g)\tau}~{\rm d}\overline{\mu}$$
which means
$$P_{\Phi}(g)=\frac{h_\sigma(\mu)}{\int\tau~{\rm d}\mu}+\frac{\int\Delta_g~{\rm d}\mu}{\int\tau~{\rm d}\mu}.$$
Therefore, $R(\mu)$ is a finite equilibrium measure for $g$.

On the other hand if $g$ has an equilibrium measure, then by Theorem \ref{thm:flow eqstate}  and Theorem \ref{thm:RPF} it has an equilibrium measure of the  form $R(\nu)$ where $\nu$  must be a RPF measure for $\Delta_g-P_{\Phi}(g)\tau$ (note that it can be infinite). Thus we can induce to yield $\overline{\nu}$ which will be an equilibrium measure for $\overline{\Delta_g-P_{\Phi}(g)\tau}$.
\end{proof}

\begin{rem}
In the proof of the second part of the lemma, the assumption  $\int\overline\tau~{\rm d}\overline\mu<\infty$ follows immediately in some cases.  For example, if $P_\Phi(g)\neq 0$ and $\Delta_g$ and $\tau$ are not asymptotically comparable (i.e., to rule out $|\int\overline\Delta_g ~{\rm d}\overline\mu|$ and $|\int\overline\tau ~{\rm d}\overline\mu|$ being simultaneously infinite, but $|\int\overline{\Delta_g-P_\Phi(g)\tau} ~{\rm d}\overline\mu|<\infty$), then the fact that $\overline\mu$ is an equilibrium measure for $\overline{\Delta_g-P_\Phi(g)\tau}$ implies that $|\int\overline{\Delta_g-P_\Phi(g)\tau} ~{\rm d}\overline\mu|<\infty$ and thus $\int\overline{\tau}~{\rm d}\overline{\mu}<\infty$.
\end{rem}

For use later, specifically in examples in Section~\ref{sec:rec flow}, we define
$$s_{\infty}:=\inf\{s:P(-s\overline{\tau})<\infty\}.$$
This number plays an important role in the thermodynamic formalism of the associated suspension flow.
 \begin{rem}\label{rmk:s infty prelim}
 We collect some basic facts about $s_\infty$ and $s\mapsto P_{\overline \sigma}(-s\overline\tau)$.
 \begin{enumerate}
 \item The constant $s_{\infty}=\infty$ if and only if $h(\Phi)=\infty$.
 \item Since $\tau\ge 0$, this is also true for the induced version.  Hence $s\mapsto P_{\overline \sigma}(-s\overline\tau)$ is a non-increasing function.  In particular, since $P_{\overline \sigma}(0)=\infty$, this means that $s_\infty\ge 0$.
\end{enumerate}
\end{rem}

\section{Recurrence and transience for suspension flows} \label{sec:rec flow}

In this section we extend the notions of recurrence and transience to potentials defined on suspension flows. This notions were given in the context of countable Markov shifts by Sarig in \cite{Sar99} and allow for the classification of a potential according to its recurrence properties. They have also been extended beyond the realm of Markov systems in \cite{it}.

We begin by defining the relevant partition function.
\begin{defi}
Let $g:Y \to \R$ be a potential such that $\Delta_g:\Sigma \to \R$ is of summable variations and $P_{\Phi}(g) < \infty$.
Given $C_{i_0}\in \Sigma$, let
$$Z_{n,\tau}(g, C_{i_0}):=\sum_{x\in C_{i_0},\ \phi_s(x, 0)=(x,0),\text{ for } n-1<s\le n}e^{\int_0^sg(\phi_t(0, x))~{\rm d}t}.$$
We say that $g$ is \emph{recurrent} if
$$
\sum_{n=1}^\infty Z_{n,\tau}(g- P_\Phi(g), C_{i_0})=\infty.$$
Otherwise, $g$ is  \emph{transient}.
\end{defi}
The following result establishes the relationship between recurrence and transience with equilibrium measures.
\begin{teo}
Let $g:Y \to \R$ be a potential such that $\Delta_g:\Sigma \to \R$ is of summable variations and $P_{\Phi}(g) < \infty$ and suppose $\tau$ satisfies  \eqref{eq:Hopf cond}.
\begin{enumerate}[label=({\alph*}),  itemsep=0.0mm, topsep=0.0mm, leftmargin=7mm]
\item The definition of recurrence is independent of the cylinder $C_{i_0}$.
\item $P_\Phi(g)=\lim_{n\to \infty}\frac1n\log Z_{n,\tau}(g, C_{i_0})$.
\item The potential $g$ is recurrent if and only if $\Delta_g-\tau P_\Phi(g)$ is recurrent and $P(\Delta_g  -P_{\Phi}(g)\tau)=0$.
\item If $g$ is recurrent then there exists a conservative measure $\nu_g$ which can be obtained as $R(\mu)$ where $\mu$ is the RPF measure for $\Delta_g-\tau P_\Phi(g)$.
\end{enumerate}
\label{thm:flow rec}
\end{teo}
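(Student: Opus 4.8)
The backbone of the proof is the correspondence between closed orbits of $\Phi$ through the base and periodic points of $\sigma$. A point $(x,0)$ with $x\in C_{i_0}$ satisfies $\phi_s(x,0)=(x,0)$ for some $s$ exactly when $\sigma^kx=x$ and $s=S_k\tau(x):=\sum_{j=0}^{k-1}\tau(\sigma^jx)$ for some $k\ge1$, and then $\int_0^s g(\phi_t(x,0))\,{\rm d}t=S_k\Delta_g(x)$. Writing $P:=P_\Phi(g)$ and $\psi:=\Delta_g-P\tau=\Delta_{g-P}$ (of summable variations, with $P_\sigma(\psi)\le0$ by Theorem~\ref{thm: flow pres} and the monotonicity of $t\mapsto P_\sigma(\Delta_g-t\tau)$), I would first record
\[
Z_n(g,C_{i_0})=\sum_{k\ge1}\ \sum_{\substack{\sigma^kx=x,\ x\in C_{i_0}\\ n-1<S_k\tau(x)\le n}}e^{S_k\Delta_g(x)},
\]
and, since every periodic point has $0<S_k\tau(x)<\infty$ and hence falls in exactly one window, summing the corresponding expression for $g-P$ over all $n$ yields the key identity
\[
\sum_{n\ge1}Z_n(g-P,C_{i_0})=\sum_{k\ge1}Z_k(\psi,C_{i_0}),
\]
whose right-hand side is precisely Sarig's discrete partition function for $\psi$. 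This identity drives (a), (c) and (d).

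For (c), recurrence of $g$ means $\sum_kZ_k(\psi,C_{i_0})=\infty$. Because the Gurevich pressure is the exponential growth rate of $Z_k(\psi,C_{i_0})$, when $P_\sigma(\psi)<0$ this series converges, so $g$ is transient; when $P_\sigma(\psi)=0$ it diverges exactly when $\psi$ is recurrent in the sense of Definition~\ref{def:rec pots} (the factor $\lambda^{-k}$ there being $1$). Since $P_\sigma(\psi)\le0$ always, this gives ``$g$ recurrent $\iff$ $P_\sigma(\psi)=0$ and $\psi$ recurrent,'' which is (c). Part (a) follows from the same dichotomy: if $P_\sigma(\psi)<0$ then $\sum_kZ_k(\psi,C_i)$ converges for every $C_i$ since the Gurevich pressure is cylinder-independent, and if $P_\sigma(\psi)=0$ then divergence is equivalent to recurrence of $\psi$, which is cylinder-independent by topological mixing and summable variations. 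For (d), recurrence of $g$ forces (by (c)) $\psi$ recurrent with $P_\sigma(\psi)=0$, so the RPF Theorem~\ref{thm:RPF} supplies a conservative $\psi$-conformal measure $m_\psi$ and a density $h$; with $\mu=h\,m_\psi$ (the RPF measure, unique by Proposition~\ref{prop:uniq-rpf}) the product $\nu_g=\mu\times m$ is a $\sigma$-finite $\Phi$-invariant measure, conservative because $\mu$ is conservative and $\tau$ satisfies the Hopf condition \eqref{eq:Hopf cond} (Remark~\ref{rmk:conser}).

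Part (b) is the technical heart. The estimate $\limsup_n\frac1n\log Z_n(g,C_{i_0})\le P$ is immediate from $Z_n(g,C_{i_0})\le\sum_{m\le n}Z_m(g,C_{i_0})$ together with $\lim_n\frac1n\log\sum_{m\le n}Z_m(g,C_{i_0})=P$ (Theorem~\ref{thm: flow pres}). To identify the value exactly I pass to a generating function: since $n-1<S_k\tau(x)\le n$ lets me replace $e^{-sn}$ by $e^{-sS_k\tau(x)}$ up to a factor bounded in terms of $s$, the two series $\sum_ne^{-sn}Z_n(g,C_{i_0})$ and $\sum_kZ_k(\Delta_g-s\tau,C_{i_0})$ converge or diverge together for each fixed $s$; as $P_\sigma(\Delta_g-s\tau)$ changes sign at $s=P$, reading the former as a power series in $e^{-s}$ shows its abscissa of convergence — namely $\limsup_n\frac1n\log Z_n(g,C_{i_0})$ — equals $P$. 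Finally I promote this to a genuine limit by approximate supermultiplicativity: concatenating two loops based at $i_0$ is admissible (each begins and ends at $i_0$), its flow-period and its $\Delta_g$-sum add up to those of the factors up to errors bounded by $\sum_iV_i(\tau)$ and $\sum_iV_i(\Delta_g)$, and $(x,y)\mapsto z$ is injective, which yields $\kappa\,Z_n(g,C_{i_0})Z_m(g,C_{i_0})\le\sum_{|j-(n+m)|\le E}Z_j(g,C_{i_0})$ for fixed $\kappa,E$; a Fekete-type argument then gives existence of $\lim_n\frac1n\log Z_n(g,C_{i_0})=P$. I expect the main obstacle to be precisely this last step: since $\tau$ is neither bounded away from zero nor bounded above, the flow-time windows and the periodic-orbit bookkeeping (including the behaviour near possible phase transitions of $t\mapsto P_\sigma(\Delta_g-t\tau)$) are delicate, and it is the summability of $V_i(\tau)$ and $V_i(\Delta_g)$ that keeps the time- and Birkhoff-sum distortions under concatenation uniformly bounded and makes the argument go through.
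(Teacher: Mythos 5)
Your treatment of parts (a), (c) and (d) coincides with the paper's own proof. The paper's proof of (c) rests on exactly your key identity (its Claim~1: since $\tau>0$, each periodic term $(x,k)$ with $\sigma^kx=x$ lands in exactly one flow-time window, so $\sum_n Z_n(g-P_\Phi(g),C_{i_0})$ is a reordering of the series of positive terms $\sum_k Z_k(\Delta_g-P_\Phi(g)\tau,C_{i_0})$), combined with your second observation (its Claim~2: if $P_\sigma(\Delta_g-P_\Phi(g)\tau)<0$ the discrete series converges, by the radius-of-convergence characterisation of Gurevich pressure). Part (a) is then read off from the cylinder-independence of the discrete notions, and (d) from the RPF Theorem~\ref{thm:RPF} together with conservativity of $\mu\times m$ --- all as you do. These parts are correct.

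The gap is in (b). The paper does not prove (b) at all: it simply cites \cite{jkl}. You attempt a direct proof, and your upper bound and the generating-function identification of $\limsup_n\frac1n\log Z_n(g,C_{i_0})=P_\Phi(g)$ are sound given Theorem~\ref{thm: flow pres}; but the Fekete step fails as written. The assertion that $(x,y)\mapsto z$ is injective is true only for a \emph{fixed} pair of discrete periods $(k,l)$. However $Z_n(g,C_{i_0})$ aggregates terms $(x,k)$ over \emph{all} discrete periods $k$ whose flow period $S_k\tau(x)$ lies in $(n-1,n]$, so a single concatenated term $(z,k+l)$ is hit once for every admissible split position $p$ with $z_p=i_0$ whose initial segment has flow period in (roughly) $(n-1,n]$. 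Since $\tau$ is not assumed bounded away from zero --- not even on $C_{i_0}$: summable variations bounds the oscillation of $\tau$ on a $1$-cylinder, not its infimum --- consecutive returns of an orbit to $C_{i_0}$ can be arbitrarily close in flow time, so the number of such splits inside a window of bounded flow length is unbounded; indeed it cannot even be bounded in terms of $j\approx n+m$, because orbits contributing to $Z_j$ may have arbitrarily large discrete period. Hence the inequality $\kappa\,Z_nZ_m\le\sum_{|j-(n+m)|\le E}Z_j$ is not established, and the multiplicity is not obviously subexponential, so the existence of the limit does not follow. This is exactly the regime (roof not bounded below) that makes the paper's setting delicate. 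To close (b) you should either run the supermultiplicativity at the level of the discrete partition functions $Z_k(\Delta_g-s\tau,C_{i_0})$, where injectivity for fixed $(k,l)$ is legitimate, and then transfer back, or simply invoke \cite{jkl} (or \cite{ke}), as the paper does.
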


\begin{rem}
The condition $P_\sigma(\Delta_g  -P_{\Phi}(g)\tau)=0$ in (c) is crucial.  In Example~\ref{eg:Hof nlogn}  below we construct a case where  $\Delta_g-\tau P_\Phi(g)$ is recurrent, but $P_\sigma(\Delta_g  -P_{\Phi}(g)\tau)<0$ and thus $g$ is transient.
\end{rem}

\begin{proof}
The proof of (a) follows from the proof of (c).
The proof of (b) follows from  \cite{jkl}.

For part (c), we will shortly prove the following:

\begin{claim}
$$\sum_{n=1}^\infty Z_{n,\tau}(g- P_\Phi(g), C_{i_0})=\sum_{n=1}^\infty Z_n(\Delta_g-\tau P_\Phi(g), C_{i_0}).$$
\label{cl:Zns same}
\end{claim}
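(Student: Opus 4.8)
The plan is to recognise both sides of Claim~\ref{cl:Zns same} as two different groupings of one and the same doubly indexed series of nonnegative terms, so that the identity reduces to the rearrangement theorem for series with nonnegative entries (Tonelli). The key point is that no analytic estimate is needed: both sides are sums of positive exponentials, hence take a well-defined value in $[0,\infty]$ that is invariant under any reordering.

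First I would translate the flow-side partition function into symbolic data on the base. Starting from $(x,0)$ with $x\in C_{i_0}$, the orbit reaches $(\sigma^j x,0)$ at flow-time $S_j\tau(x):=\sum_{l=0}^{j-1}\tau(\sigma^l x)$, and since $\tau>0$ these times are strictly increasing; thus $\varphi_s(x,0)=(x,0)$ holds exactly for those $s$ of the form $s=S_k\tau(x)$ with $k\ge 1$ and $\sigma^k x=x$. Splitting the integral over the segments $[S_j\tau(x),S_{j+1}\tau(x)]$, on each of which $\varphi_t(x,0)=(\sigma^j x,\,t-S_j\tau(x))$, the flow-Birkhoff integral of $g$ around one such loop is
$$\int_0^{S_k\tau(x)} g(\varphi_t(x,0))\,{\rm d}t=\sum_{j=0}^{k-1}\int_0^{\tau(\sigma^j x)}g(\sigma^j x,u)\,{\rm d}u=\sum_{j=0}^{k-1}\Delta_g(\sigma^j x)=S_k\Delta_g(x),$$
directly from the definition of $\Delta_g$; subtracting the constant $P_\Phi(g)$ contributes the term $-P_\Phi(g)\,S_k\tau(x)$, since $\int_0^s P_\Phi(g)\,{\rm d}t = P_\Phi(g)\,S_k\tau(x)$.

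Consequently each side of the claim is a sum, over the \emph{same} index set of pairs $(x,k)$ with $x\in C_{i_0}$ and $\sigma^k x=x$, of the common nonnegative weight $\exp\!\big(S_k\Delta_g(x)-P_\Phi(g)\,S_k\tau(x)\big)$. The two sides differ only in how these pairs are organised into the outer index $n$: on the right, where $Z_n(\Delta_g-\tau P_\Phi(g),C_{i_0})$ is the discrete partition function, the grouping is by the symbolic period, $n=k$; on the left, where $Z_n(g-P_\Phi(g),C_{i_0})$ is the flow partition function, the grouping is by the flow-period, namely $(x,k)$ lands in block $n$ precisely when $n-1<S_k\tau(x)\le n$, i.e. $n=\lceil S_k\tau(x)\rceil$. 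As $\tau>0$ gives $S_k\tau(x)>0$, every pair lands in exactly one block $n\ge 1$ on each side, so the two expressions are genuinely two groupings of one double series, and Tonelli's theorem yields equality in $[0,\infty]$. I expect the only delicate point to be the bookkeeping: verifying that $(x,k)\leftrightarrow$ closed flow loop is a bijection with no double counting (in particular that every $s$ with $\varphi_s(x,0)=(x,0)$ comes from a unique $k$, which is where strict monotonicity of $S_k\tau(x)$ from $\tau>0$ is used) and that no loop through the base is omitted; once this is pinned down, nonnegativity of the summands absorbs all the remaining content.
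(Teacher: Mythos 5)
Your proposal is correct and takes essentially the same approach as the paper: both split the flow integral over the successive base-return segments to identify each closed flow-loop term with the term $e^{S_k\Delta_g(x)-P_\Phi(g)S_k\tau(x)}$ of the discrete partition function at the periodic point $(x,k)$, and then conclude by rearrangement of a series of positive terms, using $\tau>0$ to ensure the loops are in bijection with pairs $(x,k)$. If anything, your bookkeeping is slightly more careful than the paper's write-up, which contains an apparent typo ($-nP_\Phi(g)$ where $-P_\Phi(g)\sum_{i=0}^{n-1}\tau(\sigma^i x)$ is meant) but relies on exactly the same matching of terms.
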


This can be added to the following claim, which follows immediately from the definition of pressure being the radius of convergence of the power series defined by partition functions.

\begin{claim}
$P_\sigma(\Delta_g  -P_{\Phi}(g)\tau)<0$ implies that  $\sum_{n=1}^\infty Z_n(\Delta_g-\tau P_\Phi(g), C_{i_0})<\infty$.
\end{claim}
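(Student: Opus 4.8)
The plan is to read this off directly from the Cauchy--Hadamard description of the radius of convergence, using the fact -- already recorded in the definition of the Gurevich pressure -- that $P_\sigma$ is precisely the exponential growth rate of the partition functions. Write $\psi:=\Delta_g-P_{\Phi}(g)\tau$, a potential of summable variations on $\Sigma$ which, by the standing hypothesis of the claim, satisfies $P_\sigma(\psi)<0$. Here $Z_n(\psi,C_{i_0})$ denotes the shift partition function, and by definition
\[
P_\sigma(\psi)=\lim_{n\to\infty}\frac1n\log Z_n(\psi,C_{i_0}),
\]
a genuine limit whose value is independent of the chosen cylinder by topological mixing.

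First I would introduce the formal power series $\sum_{n\ge 1} Z_n(\psi,C_{i_0})\,z^n$ and identify its radius of convergence $\rho$. Since $\frac1n\log Z_n(\psi,C_{i_0})\to P_\sigma(\psi)$, we have $Z_n(\psi,C_{i_0})^{1/n}\to e^{P_\sigma(\psi)}$, so Cauchy--Hadamard gives
\[
\rho=\frac{1}{\limsup_{n\to\infty} Z_n(\psi,C_{i_0})^{1/n}}=e^{-P_\sigma(\psi)}.
\]
Next I would invoke $P_\sigma(\psi)<0$, which yields $\rho=e^{-P_\sigma(\psi)}>1$. Hence $z=1$ lies strictly inside the disc of convergence, and because every term $Z_n(\psi,C_{i_0})$ is nonnegative the series
\[
\sum_{n\ge 1} Z_n(\Delta_g-\tau P_\Phi(g),C_{i_0})=\sum_{n\ge 1} Z_n(\psi,C_{i_0})\cdot 1^{n}
\]
converges (absolutely), which is exactly the assertion.

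I do not expect any genuine obstacle here: the entire content is the identity $\rho=e^{-P_\sigma(\psi)}$, i.e. that the Gurevich pressure is the negative logarithm of the radius of convergence of the power series built from the partition functions. The only point worth flagging is that one should use that the defining expression for $P_\sigma(\psi)$ is an honest limit, so that $\limsup_n Z_n(\psi,C_{i_0})^{1/n}=e^{P_\sigma(\psi)}$ exactly and the strict inequality $P_\sigma(\psi)<0$ transfers to strict containment $1<\rho$; this is guaranteed by the existence statement accompanying the definition of the Gurevich pressure in Section~\ref{subsec:CMS}.
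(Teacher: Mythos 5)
Your argument is correct and is exactly the paper's: the authors dispose of this claim in one line by noting it ``follows immediately from the definition of pressure being the radius of convergence of the power series defined by partition functions,'' which is precisely the Cauchy--Hadamard computation $\rho = e^{-P_\sigma(\psi)} > 1$ that you spell out. Your additional remark that the Gurevich pressure is an honest limit (so the $\limsup$ equals $e^{P_\sigma(\psi)}$ exactly) is a fair point of care, but nothing beyond the paper's intended reasoning.
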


Together the claims show that $g$ being recurrent implies that $\Delta_g-\tau P_\Phi(g)$ is recurrent and $P(\Delta_g  -P_{\Phi}(g)\tau)=0$.  Moreover, Claim~\ref{cl:Zns same} alone shows that if  $\Delta_g-\tau P_\Phi(g)$ is recurrent and $P(\Delta_g  -P_{\Phi}(g)\tau)=0$ then $g$ is recurrent.  So (c) holds,

\begin{proof}[Proof of Claim~\ref{cl:Zns same}]
First for $x\in C_{i_0}$ such that $\phi_s(x, 0)=(x, 0)$ where $n-1<s\le n$, set $k_{x,n}$ to be the number of times that $(x, 0)$ has returned to the base at time $s$, i.e., let  $k \in \N$ such that $\tau^k(x)=s$.
Then
\begin{align*}
Z_{n,\tau}(g- P_\Phi(g), C_{i_0})&=\sum_{x\in C_{i_0},\ \phi_s(x, 0)=(x,0),\text{ for } n-1<s\le n}e^{\int_0^s(g- P_\Phi(g))(\phi_t(0, x)) ~{\rm d}t}\\
&=\sum_{x\in C_{i_0},\ \phi_s(x, 0)=(x,0),\text{ for } n-1<s\le n}e^{\sum_{i=0}^{k_{x, n}-1}\int_{\tau^i(x)}^{\tau^{i+1}(x)}(g- P_\Phi(g))(\phi_t(0, x)) ~{\rm d}t}\\
&=\sum_{x\in C_{i_0},\ \phi_s(x, 0)=(x,0),\text{ for } n-1<s\le n}e^{\left(\sum_{i=0}^{k_{x, n}-1}\Delta_g(\sigma^i(x))\right)-sP_\Phi(g)}.
\end{align*}

Since
$$Z_n(\Delta_g-\tau P_\Phi(g), C_{i_0})= \sum_{\{x\in C_{i_0} :\sigma^nx=x\}}e^{\left(\sum_{i=0}^{n-1}\Delta_g(\sigma^i(x))\right)-n P_\Phi(g)},$$ each term $e^{\left(\sum_{i=0}^{k_{x, n}-1}\Delta_g(\sigma^i(x))\right)-sP_\Phi(g)}$ in the sum for $Z_{n,\tau}(g- P_\Phi(g), C_{i_0})$ is counted as part of the sum  for $Z_{k_{x, n}}(\Delta_g-\tau P_\Phi(g), C_{i_0})$.  Thus the sum $\sum_{n=1}^\infty Z_{n,\tau}(g- P_\Phi(g), C_{i_0})$ is simply a reordering of the series of positive terms  $\sum_{n=1}^\infty Z_n(\Delta_g-\tau P_\Phi(g), C_{i_0})$.
\end{proof}

For (d), the existence of the measure described follows from (c) plus the RPF Theorem (see Theorem \ref{thm:RPF})  applied to $\Delta_g-\tau P_\Phi(g)$.  Conservativity follows from the fact that since the RPF measure is conservative then so is $R(\mu)$.
\end{proof}

In order to give a criterion for the existence of equilibrium measures for the flow, we consider the following,
\begin{defi}
Let $g:Y \to \R$ be a potential such that $\Delta_g:\Sigma \to \R$ is of summable variations and $P_{\Phi}(g) < \infty$. Let
\begin{align*}
P_n:= \{x\in C_{i_0}:\exists s\in (n-1, n] & \text{ s.t. } \phi_s(x, 0)=(x, 0),\\
&\text{ but } \phi_t(x, 0)\notin Y\cap(C_{i_0}\times\R) \text{ for any } t\in (0,s)\}.
\end{align*}
For $x\in P_n$, let $s_{x}>0$ be minimal such that $\phi_{s_{x}}(x, 0)=(x, 0)$.  Now let
$$Z_{n, \tau}^*(g, C_{i_0}):=\sum_{x\in P_n} e^{\int_0^{s_{x}}g(\phi_t(x)) ~{\rm d}t}.$$
\end{defi}

\begin{rem}
For each $x\in P_n$ there exists $n_x\in \N$ such that $s_x=\tau^{n_x}(x)$, so for a potential $\tilde g$ such that  $\Delta_{\tilde g}:\Sigma \to \R$ is of summable variations, $P(\Delta_{\tilde g}) < \infty$ and $\overline{\Delta_{\tilde g}}$ is locally H\"older we have
 $$\int_0^{s_x} \tilde g(\phi_t)~{\rm d}t=S_{n_x}\Delta_{\tilde g}=\overline{\Delta_{\tilde g}}(x).$$  Let $Y_x$ be the $(n_x-1)$-cylinder $Y$ around $x$ w.r.t. the dynamics $\sigma$, so $\sigma^{n_x}(Y)=C_{i_0}$.  This set is a 1-cylinder for the induced map $\overline\sigma$.  We will use the fact below that if $\hat\mu$ is a Gibbs measure for $\overline{\Delta_{\tilde g}}$ then $\overline\mu(Y)$ is uniformly comparable to $e^{ \overline{\Delta_{\tilde g}}(x)}$.  Note that this term is a summand in the sum for $Z_{n,\tau}^*$.
\label{rmk:Gibbs flow}
\end{rem}

\begin{defi}
Let $g:Y \to \R$ be a potential such that $\Delta_g:\Sigma \to \R$ is of summable variations and $P_{\Phi}(g) < \infty$. Suppose that $g$ is recurrent.  If
$$\sum_nnZ_{n, \tau}^*(g)e^{-nP_\Phi(g)}<\infty$$
we say that $g$ is \emph{positive recurrent}. If
$$\sum_nnZ_{n, \tau}^*(g)e^{-nP_\Phi(g)}=\infty$$
we say that $g$ is \emph{null recurrent}.
\end{defi}

Note that due to the topological mixing of $\sigma$, the definition above is independent of the cylinder $C_{i_0}$.

\begin{teo}
Suppose that $\tau$ satisfies  \eqref{eq:Hopf cond} and $h(\Phi)<\infty$.  If $g:Y \to \R$ is a positive recurrent potential, then there exists an equilibrium measure $\nu_g$.
\end{teo}
\begin{proof}
Define $\overline{\Delta_g-\tau P_\Phi(g)}$ to be the induced version of $\Delta_g-\tau P_\Phi(g)$ on $C_{i_0}$.
 Also define $\overline\tau$ to be the induced roof function.  Since $g$ is recurrent and, by Theorem~\ref{thm:flow rec}, $P_\sigma(\Delta_g-\tau P_\Phi(g))=0$, the proof of \cite[Lemma 3]{Sar01a} implies that there is a Gibbs measure $\overline\mu$ for $\overline{\Delta_g-\tau P_\Phi(g)}$, which projects to the flow if $\int\overline\tau~{\rm d}\overline\mu<\infty$ (see Remark~\ref{rmk:Kac and Ab}).  Since by the same result (see also \cite{Sar03}), $\overline\mu$ is a Gibbs measure, as in Remark~\ref{rmk:Gibbs flow} the value of $\int\overline\tau~{\rm d}\overline\mu$  can be bounded by a constant times $\sum_nnZ_{n, \tau}^*(g)e^{-nP_\Phi(g)}$.  Since $g$ is positive recurrent, these values are bounded and so $\int\overline\tau~{\rm d}\overline{\mu}<\infty$. This means $\overline\mu$ does indeed project to a $\Phi$-invariant probability measure $\nu_g$.  Since we're assuming that  $h(\Phi)<\infty$, this implies that $h_\Phi(\nu_g)<\infty$, and moreover, using finiteness of the integral of $\overline\tau$ again, that $h_{\overline\sigma}(\overline\mu)<\infty$. Therefore $\overline{\mu}$ is an equilibrium state for $\overline{\Delta_g-\tau P_\Phi(g)}$ which projects to the measure $\nu_g$ which is, in turn, an equilibrium state for $g$.
\end{proof}

\begin{rem}
In the spirit of \cite{it}, we now characterise recurrence in a way which extends beyond semi-flows over
  finite shifts.  Suppose that $\tau$ satisfies  \eqref{eq:Hopf cond} and $h(\Phi)<\infty$.  Let $g:Y \to \R$ be a potential such that $\Delta_g:\Sigma \to \R$ is of summable variations and $P_{\Phi}(g) < \infty$. Sumarising,  we say that $g$ is
\begin{enumerate}
\item \emph{Positive recurrent} if it has an equilibrium measure;
\item \emph{Null-recurrent} if $P_\sigma(\Delta_g -P_{\Phi}(g) \tau ) = 0$, the potential $ \Delta_g -P_{\Phi}(g) \tau$ is recurrent with corresponding measure $\nu$ and $\int \tau ~{\rm d} \nu
= \infty$;
\item \emph{Transient} in any other case.
\end{enumerate}
In particular, our definitions extend the corresponding ones given by Sarig \cite{Sar99} for countable Markov shifts. Note that these definitions are independent of the particular choice of base dynamics, so for example still hold if we take a first return map to part of $\Sigma$.
\end{rem}

\section{The renewal flow} \label{sec:reflow}

In this section we define and study a class of suspension flows of particular interest, since they serve as symbolic models for a class of flows belonging to the boundary of hyperbolic flows.
We study the corresponding thermodynamic formalism establishing conditions for the existence of equilibrium measures and phase transitions.

\subsection{The renewal shift and the renewal flow}
For the alphabet $\N_0$, consider the
transition matrix $A=(a_{ij})_{i,j \in \N_0}$ with $a_{0,0}= a_{0,n}=
a_{n,n-1}=1$ for each $n \ge 1$ and with all other entries equal to
zero. The \emph{renewal shift} is the Markov shift
$(\Sigma_R, \sigma)$ defined by the transition matrix $A$, that~is,
the shift map $\sigma$ on the space
\[
\Sigma_R = \left\{ (x_i)_{i \ge 0} : x_i \in \N_0 \text{ and } a_{x_i
x_{i+1}}=1 \text{ for each } i \geq 0\right\}.
\]

\begin{rem} \label{rmk:full}
Let $(\Sigma_2 ,\sigma)$ be the  full-shift on the
alphabet $\{0,1\}$. There exists a topological conjugacy between the
renewal shift $(\Sigma_R,\sigma)$ and $(\Sigma_2 \setminus
\bigcup_{i=0}^{\infty} \sigma^{-i}(\overline{0}) ,\sigma)$, where
$\overline{0}=(000 \cdots)$. Indeed, denote by $(0 \cdots 01)_n$ the
cylinder $C_{0 \cdots 01}$ with $n$ zeros, and consider the alphabet
$\{ (0 \cdots 01)_n : n \ge 1 \} \cup \{C_0\}$. The possible
transitions on this alphabet are
\[
(0 \cdots 01)_n \to (0 \cdots 01)_{n-1},\ C_0 \to C_0, \text{ and }
C_0 \to (0 \cdots 01)_n \text{ for } n \ge 1.
\]
Note that this is simply a recoding of $(\Sigma_2 \setminus
\bigcup_{i=0}^{\infty} \sigma^{-i}(\overline{0}) ,\sigma)$.
\end{rem}

 Let $\mathcal{R}$ be the class of functions $\phi\colon
\Sigma_{R} \to\R$ such that:
\begin{enumerate}
\item the function $\phi$ has summable variation and is bounded from above;
\item the function $\phi$ has finite Gurevich pressure;
\item the induced function $\overline{\phi}$ is  locally H\"older continuous.
\end{enumerate}
We observe that $\mathcal{R}$ includes the class of H\"older
continuous functions. Nevertheless, there are non-H\"older continuous functions that belong
to~$\mathcal{R}$.

Thermodynamic formalism is well understood in this setting for potentials of summable variations. Indeed, Sarig \cite{Sar01a} proved the following (the version of this result for every $q \in \R$, and not just for positive values, appears in \cite[Proposition 3]{bi2}).

\begin{prop}\label{prop:BI ana}
Let $(\Sigma_R, \sigma)$ be the renewal shift. For each bounded
$\phi \in \mathcal{R}$ there exist $q_{c}^+ \in (0, +\infty]$ and
$q_{c}^- \in [-\infty, 0)$ such that:
\begin{enumerate}[label=({\alph*}),  itemsep=0.0mm, topsep=0.0mm, leftmargin=7mm]
\item $q \mapsto P_\sigma(q\phi)$ is strictly convex and real analytic in~$(q_{c}^{-},
q_{c}^{+})$.
\item
$P_\sigma(q\phi)=mq$ for $q<q_{c}^{-}$, and $P_G(q\phi)=Mq$ for
$q>q_{c}^{+}$. Here $m:= \inf \left\{ \int_{\Sigma_R}\phi \, d \mu : \mu \in
\mathcal{M}_R \right\}$ and $M:= \sup \left\{ \int_{\Sigma_R}\phi \, d \mu : \mu \in
\mathcal{M}_R \right\}.$
\item At $q_{c}^{-}$ and $q_{c}^{+}$ the function $q \mapsto P_\sigma(q\phi)$
is continuous but not analytic.
\item For each $q \in (q_c^-,q_c^+)$ there is a unique equilibrium measure
$\mu_{q}$ for~$q\phi$.
\item For each $q\not\in[q_c^-,q_c^+]$ there
is no equilibrium measure for $q\phi$ and this potential is transient.
\item The critical values $q_{c}^+$ and $q_{c}^-$ are never
simultaneously finite.
\end{enumerate}
\end{prop}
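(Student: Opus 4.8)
The plan is to transfer everything to the induced full shift on the cusp cylinder $C_0$ and read off the two affine branches of the pressure from a single Dirichlet-type series. By Remark~\ref{rmk:full} the induced system is a full shift in which there is exactly one first-return word of each length $n\ge 1$, namely the excursion $0\to(n-1)\to\cdots\to 1\to 0$. Writing $E_n:=S_n\phi$ for the Birkhoff sum of $\phi$ along the length-$n$ excursion, the induced potential of $q\phi$ equals $qE_n$ on the $n$-th generator and the return time there is $n$. Since the induced system is a full shift, for fixed $p$ the Gurevich pressure of $\overline{q\phi}-p\,r$ is $\log\Lambda(q,p)$ with
\[
\Lambda(q,p):=\sum_{n\ge 1}e^{qE_n-pn},
\]
and the discrete analogue of Lemma~\ref{lem:indu-pre flow} (cf.\ Sarig~\cite{Sar01a}) gives $P_\sigma(q\phi)=\inf\{p:\Lambda(q,p)\le 1\}$. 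The abscissa of convergence of this series is $p_\infty(q)=\limsup_n qE_n/n$, equal to $q\,\overline\Phi^+$ for $q>0$ and to $q\,\overline\Phi^-$ for $q<0$, where $\overline\Phi^+:=\limsup_n E_n/n$ and $\overline\Phi^-:=\liminf_n E_n/n$.

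Next I would pin down what a finite critical value means in these terms. If $q_c^+<\infty$ then $P_\sigma(q\phi)=Mq$ on $[q_c^+,\infty)$; on this ray the pressure can only equal the abscissa $p_\infty(q)$, since otherwise $\Lambda(q,\cdot)=1$ is solved with $\partial_p\Lambda\neq 0$ and the implicit function theorem would make the pressure strictly convex and analytic there, contradicting linearity. Hence $M=\overline\Phi^+$ together with the transience condition $\Lambda(q,Mq)=\sum_n e^{q(E_n-Mn)}\le 1$ for all $q\ge q_c^+$; in particular this series converges, which forces $E_n-Mn\to-\infty$. The symmetric argument shows that $q_c^->-\infty$ forces $m=\overline\Phi^-$ and $\sum_n e^{q(E_n-mn)}<\infty$ for $q\le q_c^-$, hence $E_n-mn\to+\infty$. (That $M,m$ really are $\sup_n E_n/n$ and $\inf_n E_n/n$ follows because every invariant probability measure charges $C_0$—each point descends to $0$—so by Kac it projects from the induced system and $\int\phi\,d\mu$ is a convex combination of the excursion averages $E_n/n$.)

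The heart of the proof is to show these two requirements cannot coexist. Suppose $\lim_n E_n/n=L$ exists. Then $M=\overline\Phi^+=L=\overline\Phi^-=m$, so $M=m=L$; but the right-hand branch gives $E_n-Ln\to-\infty$ while the left-hand branch gives $E_n-Ln=E_n-mn\to+\infty$, a direct contradiction. Hence $q_c^+$ and $q_c^-$ cannot both be finite. To justify the existence of $\lim_n E_n/n$ I would exploit summable variation: comparing the cyclic shifts of the periodic excursion word with the descending rays $d_j=(j,j-1,\dots,1,0,0,\dots)$ and using $\sum_j V_j(\phi)<\infty$ yields $E_n=\sum_{j=1}^{n-1}\phi(d_j)+O(1)$, so that $E_n/n$ is, up to $o(1)$, the Ces\`aro average of the bounded sequence $a_j:=\phi(d_j)$; the problem thus reduces to the convergence of these Ces\`aro averages.

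The step I expect to be the main obstacle is precisely this last one. Taken separately, the necessary conditions $E_n-Mn\to-\infty$ and $E_n-mn\to+\infty$ are individually consistent (they constrain the behaviour of $E_n/n$ only along the subsequences approaching $\overline\Phi^+$ and $\overline\Phi^-$ respectively), so a soft convexity argument on $P_\sigma(q\phi)$ is not enough: the genuine content lies in controlling the \emph{joint} summability. I would organise this final estimate around the partial sums $A_n=\sum_{j<n}(a_j-M)$ and $B_n=\sum_{j<n}(a_j-m)$, which satisfy $B_n-A_n=(M-m)(n-1)$, and aim to show that convergence of $\sum_n e^{qA_n}$ and of $\sum_n e^{-q'B_n}$ for some $q,q'>0$ can occur only when $M=m$; this is where the boundedness hypothesis on $\phi$ and the rigid descending structure of the renewal shift must be used decisively. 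For the geometric potentials arising in the applications (Sections~\ref{sec:reflow}–\ref{sec:MP flow}), where $\phi$ has a well-defined limit along the cusp, the convergence of the excursion averages—and hence the contradiction—is immediate.
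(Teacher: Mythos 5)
First, a point of comparison: the paper does not prove this proposition at all — it is quoted from Sarig \cite{Sar01a} (for $q>0$) and from Barreira--Iommi \cite[Proposition 3]{bi2} (for all $q\in\R$), so there is no internal argument to measure your attempt against; what follows assesses your proposal on its own terms. Your reduction to the induced full shift over $C_0$ is set up correctly (one first-return word per length $n$, pressure read off from the series $\Lambda(q,p)$ — exactly for potentials whose induced version is constant on $1$-cylinders, and only up to bounded distortion corrections in general), and your extraction of the necessary conditions for finite critical values is sound: $q_c^+<\infty$ forces $M=\limsup_n E_n/n$ together with $\sum_n e^{q(E_n-Mn)}\le 1$ for $q\ge q_c^+$, and symmetrically for $q_c^-$. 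Parts (a)--(e) are only gestured at; essentially all your effort goes into (f), and that is where the proposal is incomplete, as you yourself say.

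The gap is genuine, and neither of your proposed repairs can close it, because the conditions you aim to contradict are \emph{jointly satisfiable} within the class $\mathcal{R}$ when $M\neq m$. Route one (existence of $\lim_n E_n/n$) fails: as you reduce it, this is convergence of the Ces\`aro averages of the bounded sequence $a_j=\phi(d_j)$, which need not hold; moreover \emph{every} bounded sequence $(a_j)$ is realised by the potential $\phi(x)=a_{x_0}$, which is locally constant, bounded, of finite pressure, with locally constant induced potential, hence lies in $\mathcal{R}$. Route two (joint summability forces $M=m$) also fails: choose steps $a_j\in\{-1/2,\,1/2,\,3/2\}$ so that the walk $E_n=a_0+\cdots+a_{n-1}$ stays in the corridor between $2\log n$ and $n-2\log n$ for all large $n$, oscillating between the two walls along sparse subsequences (up-phases with step $3/2$, down-phases with step $-1/2$). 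Since every invariant probability measure charges $C_0$, one gets exactly $M=\sup_n E_n/n=\limsup_n E_n/n=1$ and $m=\inf_n E_n/n=\liminf_n E_n/n=0$, while $u_n=Mn-E_n\ge 2\log n$ and $v_n=E_n-mn\ge 2\log n$ eventually, so $\sum_n e^{-qu_n}<1$ and $\sum_n e^{-q'v_n}<1$ for all large $q,q'$. For this $\phi$ the renewal equation (or the Gibbs inequality on the induced shift) gives $P_\sigma(q\phi)=qM$ with $q\phi$ transient for all large $q>0$, and $P_\sigma(q\phi)=qm$ with $q\phi$ transient for all very negative $q$: both critical values are finite. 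So no argument along your lines can establish (f) for arbitrary bounded $\phi\in\mathcal{R}$; the conclusion genuinely requires an extra hypothesis such as $\lim_{n\to\infty}\sup_{x\in C_n}\phi=\lim_{n\to\infty}\inf_{x\in C_n}\phi$ (the analogue of condition \eqref{eq:alpha} imposed later in the paper), which does hold for the geometric potentials in all the applications. Under that hypothesis your third-paragraph contradiction ($E_n-Ln\to-\infty$ and $E_n-Ln\to+\infty$ simultaneously) is complete and correct; without it, the honest course is what the paper does, namely to invoke \cite{Sar01a} and \cite[Proposition 3]{bi2}, with the caveat that those results are used under the limit assumption just described.
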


To reiterate, if $ q \in (q_{c}^{-},q_{c}^{+})$ the potential $q \phi$ is positive recurrent and for $q<q_c^{-}$ or $q>q_c^{+}$ the potential $q \phi$ is transient.
At the critical values the potential can have any  recurrence mode: $q_c \phi$ can be positive recurrent, null-recurrent or transient (see \cite[Example 2]{Sar01a}).

\begin{defi}
Let $\tau:\Sigma_R \to \mathbb{R}$ be a roof function with $\tau \in \mathcal{R}$ and
 satisfying \eqref{eq:Hopf cond}.
The suspension semi-flow $\Phi_R=\varphi_t(x,s)$ defined in the canonical way on the (non-compact) space
\begin{equation*}
Y_R= \{ (x,t)\in \Sigma_R \times \R \colon 0 \le t \le\tau(x)\}.
\end{equation*}
is called a \emph{renewal semi-flow}.
\end{defi}

If $\lim_{x \to \overline0}\tau(x)=0$ then we can think of this flow as one having a \emph{cusp} at $(\overline{0},0)$. This, of course, has several dynamical consequences. For instance:

\begin{eje}[An infinite entropy renewal flow]  Here we present an example of a semi-flow where the presence of a cusp causes the flow to have infinite topological entropy.   Clearly for this to be interesting, the base dynamics should have finite entropy: we consider the renewal shift, which has topological entropy $\log 2$.

Consider the renewal semi-flow, with roof function $\tau$ to be defined later.
By  Propositions ~\ref{prop:Abr} and \ref{prop:Sav}, $$h(\Phi)=\sup_{\nu\in \mathcal{E}_\sigma}\left\{\frac{h_\sigma(\nu)}{\int\tau ~{\rm d}\nu}\right\}.$$
Consider the induced system $(\overline\Sigma, \overline\sigma, \overline \tau)$ given by the first return map to $C_0$.  As usual, for each $n\in \N$, denote the domain with first return time $n$ by $X_n$.  Then by the Abramov formula and by approximating measures by compactly supported ones,
 $$h(\Phi)=\sup_{\overline\nu\in \mathcal{E}_{\overline\sigma}(\overline\tau)}\left\{\frac{h_{\overline\sigma}(\overline\nu)}{\int\overline\tau ~{\rm d}\overline\nu}\right\}=\sup_{\overline\nu\in \mathcal{E}_{\overline\sigma}}\left\{\frac{h_{\overline\sigma}(\overline\nu)}{\int\overline\tau~{\rm d}\overline\nu}\right\}$$
where $\mathcal{E}_{\overline\sigma}(\overline\tau)$ is the space of ergodic $\overline{\sigma}$-invariant  measures for which $\tau$ is integrable.
 (We don't actually use the second equality here.)
If $\overline\nu$ is a Markov measure (see \cite[p.22]{wa} for the definition) for $(\overline\Sigma, \overline\sigma)$, then
$$\frac{h_{\overline\sigma}(\overline\nu)}{\int\overline\tau~{\rm d}\overline\nu}=-\frac{\sum_n\nu(X_n)\log\nu(X_n)}{\sum_n\nu(X_n)s_n},$$
where $s_n=\overline\tau|_{X_n}$.

Setting
\begin{equation*}
\tau(x)=\begin{cases}
\log \log (1+e) & \text{ if } x\in C_0\\
\log \log (1+e+n)-\log \log (1+e+n-1) & \text{ if } x\in  C_n\text{ for }n\geq 1,
\end{cases}
\end{equation*}
we obtain $s_n=\log \log (e+n)$.  So in this case,
$$\frac{h_{\overline\sigma}(\overline\nu)}{\int\overline\tau~{\rm d}\overline\nu}=-\frac{\sum_n\nu(X_n)\log\nu(X_n)}{\sum_n\nu(X_n)\log\log(e+n)}.$$
So for example, for $N\in \N$, the measure $\overline\nu_N$ giving mass $1/N$ to $X_n$ if $1\le n\le N$ and zero mass otherwise has
$$\frac{h_{\overline\sigma}(\overline\nu_N)}{\int\overline\tau~{\rm d}\overline\nu_N}=\frac{N\log N}{\sum_{n=1}^N\log\log(e+n)}\ge \frac{\log N}{\log\log(e+N)}\to \infty \text{ as } N\to \infty.$$
Thus $h(\Phi)=\infty$.  This argument implies that we have some freedom to alter $\tau$, but so long as it is chosen so that $\log N/s_N\to \infty$ as $N\to \infty$, the entropy of the flow will still be infinite.
\label{eg:inf ent}
\end{eje}

\subsection{Equilibrium measures for the renewal flow}

In the next proposition we characterise bounded potentials having equilibrium measures.

\begin{prop} \label{prop:ren-eq}
Let   $\Phi_R$ be a renewal semi-flow of finite entropy and $g: Y_R \to \R$ a bounded potential such that $\Delta_g : \Sigma_R \to \R$ is locally H\"older. There exists an equilibrium measure for $g$  if and only if  the potential $\Delta_g -P_{\Phi}(g) \tau$ is recurrent, and the  RPF  measure $\nu_g$  has $\int \tau ~{\rm d} \nu_g < \infty$.  Here  $\mu=\nu_g\times m)|_Y/((\nu_g\times m)(Y))$. Moreover, if the potential $g$ has an equilibrium measure, then it is unique.
\end{prop}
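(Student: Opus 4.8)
The plan is to read this off the general results, Theorems~\ref{thm:flow eqstate} and~\ref{thm:uni}, the only real work being to check that over the renewal shift a \emph{bounded} potential is captured by cases (a)--(b) of Theorem~\ref{thm:flow eqstate} under the stated hypotheses. Uniqueness is immediate: $(\Sigma_R,\sigma)$ is topologically mixing and $\Delta_g$ is locally H\"older, so Theorem~\ref{thm:uni} applies and gives at most one equilibrium measure. Writing $\psi:=\Delta_g-P_\Phi(g)\tau$ and $c:=P_\sigma(\psi)$, the definition of $P_\Phi(g)$ gives $c\le 0$. The ``only if'' direction is then free: an equilibrium measure puts us in case (a) or (b) of Theorem~\ref{thm:flow eqstate}, so $\psi$ is (positive or null) recurrent with RPF measure $\nu_g$ and $\int\tau\,{\rm d}\nu_g<\infty$. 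Everything thus reduces to the converse, for which I must upgrade ``$\psi$ recurrent with $\int\tau\,{\rm d}\nu_g<\infty$'' to ``$c=0$'': once $c=0$, recurrence is exactly the positive/null recurrence of case (a)/(b) and an equilibrium measure exists.

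The role of boundedness is that $|\Delta_g|\le\|g\|_\infty\tau$, so $\int\tau\,{\rm d}\nu_g<\infty$ forces $\Delta_g\in L^1(\nu_g)$ and hence $\psi\in L^1(\nu_g)$. I would then separate the two recurrence modes. If $\psi$ is null recurrent, $\nu_g$ is infinite; assume for contradiction $c<0$. Then $\int(c-\psi)\,{\rm d}\nu_g=c\,\nu_g(\Sigma_R)-\int\psi\,{\rm d}\nu_g=-\infty$, which is in particular $<\infty$, so Theorem~\ref{thm:infiniteRPF}(b) yields $h_\sigma(\nu_g)=-\infty$ --- absurd. Hence $c=0$ in the null recurrent case.

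The positive recurrent case is the delicate one, since there $\nu_g$ is finite, $\int\tau\,{\rm d}\nu_g<\infty$ is automatic (as $\tau$ is bounded above), and the entropy inequality no longer produces a contradiction when $c<0$. Here I would pass to the induced full shift over $C_0$, with return time $r_0$ and induced potentials $\overline{\Delta_g},\overline\tau$ (locally H\"older, since $\Delta_g$ is locally H\"older and $\tau\in\mathcal R$). Set $\Pi(s,u):=P_{\overline\sigma}(\overline{\Delta_g}-s\overline\tau-u\,r_0)$, a jointly convex function that is non-increasing in each variable. Recurrence of $\psi$ is equivalent, by Sarig's inducing characterisation \cite{Sar01a}, to $\Pi(P_\Phi(g),c)=0$, and positive recurrence means in addition that $\nu_g$ is finite, equivalently that the expected return time $\int r_0\,{\rm d}\overline{\nu_g}$ of the lift $\overline{\nu_g}$ is finite. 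This last fact says $\Pi$ has a finite partial derivative in $u$ at $(P_\Phi(g),c)$, which together with the convexity and monotonicity of $\Pi$ (whose finiteness domain is closed under increasing $s$ or $u$) places $(P_\Phi(g),c)$ in the interior of that domain.

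With $(P_\Phi(g),c)$ interior, and since $\Pi(s,0)\le\Pi(s,c)$ (monotonicity in $u$, as $0\ge c$), the convex function $s\mapsto\Pi(s,0)=P_{\overline\sigma}(\overline{\Delta_g}-s\overline\tau)$ is finite and continuous near $s=P_\Phi(g)$. By Lemma~\ref{lem:indu-pre flow} it is strictly positive for $s<P_\Phi(g)$ and at most $0$ at $P_\Phi(g)$, so continuity forces $\Pi(P_\Phi(g),0)=0$; as $u\mapsto\Pi(P_\Phi(g),u)$ is strictly decreasing with $\Pi(P_\Phi(g),c)=0$, this gives $c=0$, as needed. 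The hard part is exactly this positive recurrent step: the finiteness of $\int\tau\,{\rm d}\nu_g$ must be converted, through the finite expected return time and the regularity of the induced full-shift pressure (cf.~\cite{ij} and Proposition~\ref{prop:BI ana}), into the exclusion of a phase transition at $s=P_\Phi(g)$. That this cannot be omitted is witnessed by Example~\ref{eg:Hof nlogn}, where $\psi$ is recurrent but $c<0$ and $\int\tau\,{\rm d}\nu_g=\infty$, so $g$ is transient and has no equilibrium measure.
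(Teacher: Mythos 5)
Your skeleton is the right one (uniqueness from Theorem~\ref{thm:uni}, the ``only if'' direction from Theorem~\ref{thm:flow eqstate}, reduction of the ``if'' direction to proving $c:=P_\sigma(\Delta_g-P_\Phi(g)\tau)=0$), and the null recurrent case is acceptable if one reads the hypothesis of Theorem~\ref{thm:infiniteRPF}(b) as permitting the value $-\infty$. The genuine gap is in the positive recurrent case, at exactly the step you flag as delicate: the claim that finiteness of $\int r_0~{\rm d}\overline{\nu_g}$ gives a finite partial derivative of $\Pi$ in $u$ and therefore places $(P_\Phi(g),c)$ in the \emph{interior} of the finiteness domain of $\Pi$. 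A convex function can be finite, with finite one-sided derivatives, at a boundary point of its effective domain, and for renewal flows this is the typical situation rather than a pathology. Take $\overline\tau$ locally constant with $\overline\tau|_{X_n}=-\log a_n$, $a_n\asymp n^{-3}$ normalised so that $\sum_n a_n=1$ (recall the renewal shift has exactly one return word per return time). Then $-\tau$ is positive recurrent with $P_\sigma(-\tau)=0$, $h(\Phi)=1$ and finite expected return time, yet $\Pi(s,u)=\log\sum_n a_n^{s}e^{-u(n+1)}$ is infinite whenever $u<0$, so the relevant point $(1,0)$ lies on the \emph{boundary} of $\{\Pi<\infty\}$ even though the partial derivative in $u$ from the right is finite there. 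So positive recurrence does not buy interiority, and the finiteness and continuity of $s\mapsto\Pi(s,0)$ to the left of $s=P_\Phi(g)$, which your endgame requires, is unjustified as written; without it, $\Pi(\cdot,0)$ could jump from $+\infty$ to a negative value at $P_\Phi(g)$ (this is precisely the transient scenario of Example~\ref{eg:Hof nlogn}).

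What rescues the statement --- and is how the paper argues --- is the boundedness of $g$ and of $\tau$, used directly and with no case analysis or inducing at all: since $K_2\tau\le\Delta_g\le K_1\tau$ and, by finite entropy and Proposition~\ref{prop:BI ana}, $P_\sigma(-h(\Phi)\tau)=0$, the function $q\mapsto P_\sigma(\Delta_g-q\tau)$ is finite everywhere, convex (hence continuous), nonnegative at some $q_1$ and nonpositive at some $q_2$, so it vanishes exactly at $q=P_\Phi(g)$; thus $c=0$ holds unconditionally, before any recurrence hypothesis is imposed. If you want to keep your induced picture, the same boundedness repairs it locally: assuming $c<0$, for $0<P_\Phi(g)-s\le -c/\sup\tau$ one has the pointwise bound $\overline{\Delta_g}-s\overline\tau\le\overline{\Delta_g}-P_\Phi(g)\overline\tau-c\,r_0$, hence $\Pi(s,0)\le\Pi(P_\Phi(g),c)=0$, contradicting $\Pi(s,0)>0$ from Lemma~\ref{lem:indu-pre flow}; note this dispenses with continuity and with the interior claim altogether, and covers both recurrent modes at once. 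Finally, your closing appeal to Example~\ref{eg:Hof nlogn} as a case where ``$\psi$ is recurrent but $c<0$'' is off: by the analysis in Section~\ref{sec:Hof roof}, there $-\tau$ is transient with $P_\sigma(-\tau)=0$ (first case) or null recurrent with $\int\tau~{\rm d}\nu=\infty$ (second case). Indeed no example with bounded $g$, finite entropy and $\psi$ recurrent with $c<0$ can exist --- that is exactly the content of $c=0$ --- and if one did, it would contradict the proposition you are proving.
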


\begin{proof}
We would like to apply Theorems~\ref{thm:flow eqstate} and then Theorem~\ref{thm:uni} here.
First we will show that the equation in the variable $ q \in \R$ given by
\[ P_\sigma(\Delta_g -q(g) \tau) =0,\]
always has a root. Indeed, since the potential $g$ is bounded there exist constants  $K_1, K_2 \in \R$ such that
$K_2 \tau \leq	\Delta_g \leq K_1 \tau.$ Therefore,
\begin{equation*}
 P_\sigma((K_2- q) \tau) \leq  P_\sigma(\Delta_g -q \tau) \leq  P_\sigma((K_1- q) \tau).
  \end{equation*}
It is a direct consequence of Proposition \ref{prop:BI ana} and of the fact that $h(\Phi)< \infty$ that $P_\sigma(-h(\Phi) \tau) =0$. Therefore there exist $q_1, q_2 \in \R$ such that
\[ 0 \leq P_\sigma((K_2- q_1) \tau) \leq  P_\sigma(\Delta_g -q_1 \tau) <\infty, \]
and
\[ P_\sigma(\Delta_g -q_2 \tau) \leq  P_\sigma((K_1- q_2) \tau) \leq 0.\]
Since the pressure is a continuous function of the variable $q$ we obtain the desired result.

By virtue of Theorem~\ref{thm:flow eqstate}, if the potential
$\Delta_g -P_{\Phi}(g) \tau$ is transient then there are no equilibrium measures for $g$.

If $\Delta_g -P_{\Phi}(g) \tau$ is positive recurrent then there exist a measure $\nu_g \in \Sigma_R$ which is an equilibrium measure for $\Delta_g -P_{\Phi}(g) \tau$ . Since the roof function $\tau\in \mathcal{R}$, it is bounded, so $\tau \in L^1(\nu_g)$, therefore there exists an equilibrium measure for $g$.

 The remaining case is when the potential  $\Delta_g -P_{\Phi}(g) \tau$ is null recurrent. Denote by $\nu_g$ the  corresponding infinite RPF measure. Theorem \ref{thm:flow eqstate} together with the fact that  $\tau \in L^1(\nu_g)$ yield the desired result.

The uniqueness of the equilibrium measure is a direct consequence of Theorem  \ref{thm:uni}.
  \end{proof}

\begin{rem}
If the renewal flow $\Phi_R$ has infinite entropy, as in Example~\ref{eg:inf ent}, then it is a direct consequence of the variational principle that bounded potentials  on $Y_R$ do not have equilibrium measures.
 \end{rem}

\begin{rem}
In the proof of Proposition~\ref{prop:ren-eq} we obtained that  if $\Delta_g -P_{\Phi}(g) \tau$ is positive recurrent and $\nu_g$ is the corresponding measure then $\tau \in L^1(\nu_g)$. Therefore, in the positive recurrent case that assumption is not needed.
 \end{rem}

\begin{rem}
Let   $\Phi_R$ be a renewal flow of finite entropy and $g: Y_R \to \R$ a bounded potential such that $\Delta_g : \Sigma_R \to \R$ is locally H\"older . Recall that  locally H\"older potentials of finite pressure defined on the full-shift on a countable alphabet are positive recurrent (see \cite[Corollary 2]{Sar03}). By Kac's formula obtained in \cite[Lemma 3]{Sar01a} we have that if $P_{\overline\sigma}(\overline{\Delta_g -P_{\Phi}(g) \tau})=0$ then  $\Delta_g -P_{\Phi}(g) \tau$ is recurrent. Denote by $\nu_g$ the corresponding equilibrium measure. So as above, to find an equilibrium measure for $g$, it suffices to check that for the RPF measure $\nu_g$, we have $\tau \in L^1(\nu_g)$.
\end{rem}

\subsection{Measures of maximal entropy: Hofbauer type roof functions}
\label{sec:Hof roof}

In this subsection we look at when suspension flows over the renewal shift have measures of maximal entropy. We will assume that
$$\lim_{n\to\infty}\sup_{x\in C_n}\{\tau(x)\}=0.$$
Note that this is not in conflict with \eqref{eq:Hopf cond}, and indeed the renewal structure forces \eqref{eq:Hopf cond} to be true for roof functions where $\inf_{x\in C_0}\{\tau(x)\}>0$.
Recall that we induce on the cylinder $C_0$ to obtain $(\overline\Sigma, \overline\sigma)$ with corresponding roof function $\overline\tau$.
 By Lemma \ref{lem:indu-pre flow} it follows that there exists a measure of maximal entropy if and only if the roof function $\tau$ satisfies that $P_{\overline{\sigma}}(-h(\Phi)\overline{\tau})=0$ and $-h(\Phi)\overline{\tau}$ has an equilibrium measure. This means that in particular if $P_{\overline{\sigma}}(-s_{\infty}\overline{\tau})>0$ then there exists a measure of maximal entropy; however if $P_{\overline{\sigma}}(-s_{\infty}\overline{\tau})<0$ there exists no measure of maximal entropy.

We can also say that if $P_{\overline{\sigma}}(-s_{\infty}\overline{\tau})>0$ then $-h(\Phi)\tau$ will be positive recurrent and will have an equilibrium measure if and only if $\int r_0~{\rm d}\overline{\mu}<\infty$ where $\overline{\mu}$ is the equilibrium measure for $-h(\Phi)\overline{\tau}$. Otherwise $-h(\Phi)\tau$ will be null-recurrent. Specific cases of this where $-h(\Phi)\tau$ is null-recurrent are given below in Lemma \ref{lem:MP}.

In the case where $P_{\overline{\sigma}}(-s_{\infty}\overline{\tau})=0$ and thus $h(\Phi)=s_{\infty}$ then it is possible that the Gibbs measure $\overline{\mu}$ for $-s_{\infty}\overline{\tau}$ will satisfy that $\int\overline{\tau}~{\rm d}\overline{\mu}=\infty$. In this case $-h(\Phi)\tau$ is null recurrent and there is no measure of maximal entropy for $\Phi$. Finally if $P_{\overline{\sigma}}(-s_{\infty}\overline{\tau})<0$ and thus $h(\Phi)=s_{\infty}$ then $P(-s_{\infty}\tau)=0$ and $-s_\infty\tau$ is transient.  We can adapt the techniques used by Hofbauer in \cite{ho} to produce examples where there is no measure of maximal entropy and $-h(\Phi)\tau$ is either transient or null-recurrent (In this setting if $-h(\Phi)\tau$ is positive recurrent then there must be a measure of maximal entropy for $\Phi$.)

\begin{eje}\label{eg:Hof nlogn}
We fix $k>0$ such that $\sum_{n=0}^{\infty}\frac{1}{(n+k)(\log(n+k))^2}<1$. Then for $n\geq 0$ let $a_n:=\frac{1}{(n+k)(\log(n+k))^2}$
 and consider the locally constant potential $\tau:\Sigma_R \to \R$ defined on each cylinder $C_n$ with $n \geq 1$ by $\tau|_{C_n} =-\log\left(\frac{a_n}{a_{n-1}}\right)$ and with $\tau|_{C_0}=-\log a_0$. This gives that the induced roof function $\overline{\tau}$ will also be locally constant with $\overline{\tau}|_{\overline{C_n}}=-\log a_n$ for $n\geq 0$. Thus
$$P_{\overline{\sigma}}(-t\overline{\tau})=\log\left(\sum_{n=0}^{\infty}\left(\frac{1}{(n+k)(\log(n+k))^2}\right)^t\right)$$
when this is finite, and otherwise $P_{\overline{\sigma}}(-t\overline{\tau})=\infty$. So we have that $s_{\infty}=1$ and $P_{\overline{\sigma}}(-\overline{\tau})<0$. Thus $h(\Phi)=1$ and there is no measure of maximal entropy for $\Phi$ and $-\tau$ is transient. This example also
relates to the construction of infinite iterate function systems with no measure of maximal dimension considered by Mauldin and Urba\'{n}ski in
\cite{mu1}.

Now consider the alternative case when $a_n$ is the same as above for $n\ge 1$, but $a_0=1-\sum_{n=1}^{\infty}a_n$. Define $\tau$ as above and note that we now have that $P(-\overline{\tau})=0$. Moreover,
$$-\sum_{n=0}^{\infty}a_n\log a_n=\infty$$
and so the Gibbs measure $\overline{\mu}$ for $\overline{\tau}$ satisfies $\int\overline{\tau}~{\rm d}\overline{\mu}=\infty$ and we are in the case when there is no measure of maximal entropy for $\Phi$ and $-\tau$ is null-recurrent.
\end{eje}

\subsection{Phase transitions for the renewal flow}  Bowen and Ruelle \cite{br} showed that the pressure function, $t \mapsto P_{\Phi}(t g)$,
for suspension flows defined over (finite state) sub-shifts of finite type with H\"older roof function is real analytic when considering potentials $g$ such that $\Delta_g$ is  H\"older.  In particular, the pressure is real analytic for Axiom A flows. Note that since the pressure is convex it is differentiable at every point of the domain, except for at  most a countable set.  We say that the pressure exhibits a \emph{phase transition}  at the point $t_0 \in \R$ if the function $P_{\Phi}(t g)$ is \emph{not} real analytic at $t=t_0$. In \cite{ij} the regularity of the pressure was studied and conditions in order for the pressure to be real analytic or to exhibit phase transitions were found in the context of BIP shifts in the base (see \cite{ij} for precise definitions, but roughly speaking these are shifts that combinatorially are close to the full-shift).  In the discrete time setting, the renewal shift is fairly well understood (see
Proposition \ref{prop:BI ana} and \cite{Sar01a}). The pressure exhibits at most one phase transition after which the pressure takes the form $P(t \psi)= At$. Phase transitions of this type are called phase transition of \emph{zero entropy}, since the line $At$ passes through zero. Recently, in the context of maps phase transitions of \emph{positive entropy} have been constructed (in this cases the pressure function takes the form $P(t \psi)= At+B$, with $B \neq 0$). Indeed, examples have been obtained in \cite{dgr, it} and  perhaps most comprehensively in \cite{bt}, where $B= h_{top}(T)$.

For the renewal flow the situation is richer than in the renewal shift setting.
 In particular, the pressure function can exhibit two phase transitions (see Example \ref{eg:2phase}) as opposed to the discrete time case, where at  most there exists one phase transition.  Moreover it can exhibit phase transitions of zero and positive entropy depending on the value of $s_{\infty}$ (see Section~\ref{sec:ind} for a precise definition). Indeed, the case in which $s_{\infty}=0$ can be thought of as zero entropy phase transitions  and $s_{\infty}>0$ correspond to positive entropy phase transitions (see Example \ref{eg:2phase} for a positive entropy phase transition).  In this sub-section we establish conditions in order for the pressure function $t \mapsto P_{\Phi}(tg)$ to be real analytic or to exhibit phase transitions.

We fix a positive function $\tau\in-\mathcal{R}$
 as our roof function and let $g:Y_{R}\to\R$ be a function such that $\Delta_g\in\pm\mathcal{R}$.  In addition we will assume that  there exists $\alpha\in\R$ such that
\begin{equation}
\lim_{n\to\infty}\sup_{x\in C_n}\left\{\frac{\Delta_g(x)}{\tau(x)}\right\}=\lim_{n\to\infty}\inf_{x\in C_n}\left\{\frac{\Delta_g(x)}{\tau(x)}\right\}=\alpha.
\label{eq:alpha}
\end{equation}
Recall that we proved in Lemma \ref{lem:indu-pre flow} that we can calculate the pressure of $g$ using the induced map. We have that
$$P_{\Phi}(g)=\inf\{s:P_{\overline{\sigma}}(\overline{\Delta_g-s\tau})\leq 0\}.$$

\begin{rem}
Recall that $s_{\infty}=\inf\{s:P(-s\overline{\tau})<\infty\}$. Under condition \eqref{eq:alpha}, and assuming $h(\Phi)<\infty$, there must exist a sequence of measures $(\overline\nu_n)_n$ such that $\int\overline\tau~{\rm d}\overline\nu_n\to\infty$.  This follows from the variational principle and the two observations that $h(\overline\sigma)=\infty$; and for each $s>s_\infty\ge 0$ there exists $C>0$ such that $h_{\overline\sigma}(\overline\nu)-s\int \overline\tau~{\rm d}\overline\nu<C$ for any $\overline\sigma$-invariant measure $\overline\nu$.

\label{rmk:s infty}
\end{rem}

\begin{lema}\label{lem:critvalue}
For $t\in\R$ we have that
$$\inf\{s:P_{\overline{\sigma}}((\overline{t\Delta_g-s\tau}))<\infty\}=s_{\infty}+t\alpha.$$
\end{lema}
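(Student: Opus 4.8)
The plan is to reduce the finiteness of the induced pressure to the convergence of an explicit series over first-return loops, and then to use condition \eqref{eq:alpha} to compare that series with the one defining $s_\infty$. Throughout I read the relevant induced potential as $\overline{t\Delta_g-s\tau}=t\overline{\Delta_g}-s\overline{\tau}$ (the $t$ arises from replacing $g$ by $tg$, so it multiplies $\Delta_g$ but not the fixed roof $\tau$). Since $(\overline\Sigma,\overline\sigma)$ is a full shift on a countable alphabet and both $\overline{\Delta_g}$ and $\overline\tau$ are locally H\"older, I would first invoke the standard expression for the pressure on a full shift (see \cite{Sar01a,mubook}): for locally H\"older $\psi$ one has $P_{\overline\sigma}(\psi)<\infty$ if and only if $\sum_{C}\exp(\sup_{C}\psi)<\infty$, the sum running over the induced $1$-cylinders $C$. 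For the renewal shift these are indexed by the first return time $N$ (one loop $0\to N-1\to\cdots\to 1\to 0$ for each $N$); I write $\tau_N$ for a representative value of $\overline\tau$ on the loop of length $N$, and bounded variation lets me pass freely between $\sup$, $\inf$ and $\tau_N$ at a bounded multiplicative cost. In particular $s_\infty=\inf\{v:\sum_N e^{-v\tau_N}<\infty\}$.

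The key step is the estimate: for every $\epsilon>0$ there is a constant $B_\epsilon$ such that $|\overline{\Delta_g}(x)-\alpha\overline{\tau}(x)|\le \epsilon\,\overline{\tau}(x)+B_\epsilon$ for all $x$. To prove it I split the Birkhoff sum $S_N(\Delta_g-\alpha\tau)$ along a first-return loop into the visits to high cylinders $C_m$ with $m>M$ and the visits to the finitely many low cylinders $C_0,\dots,C_M$. By \eqref{eq:alpha} I may choose $M$ so that $|\Delta_g-\alpha\tau|\le\epsilon\tau$ on every high cylinder, whence the high-cylinder part is bounded by $\epsilon\,S_N\tau=\epsilon\,\overline{\tau}$; each low cylinder is visited at most once in a renewal loop, and $\Delta_g,\tau$ are bounded on each fixed cylinder (bounded variation), so the low-cylinder part is bounded by $B_\epsilon:=\sum_{m\le M}\sup_{C_m}|\Delta_g-\alpha\tau|<\infty$, independent of $N$.

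Feeding this into the potential gives
\[(t\alpha-s-|t|\epsilon)\,\overline{\tau}-|t|B_\epsilon\le t\overline{\Delta_g}-s\overline{\tau}\le(t\alpha-s+|t|\epsilon)\,\overline{\tau}+|t|B_\epsilon,\]
so taking $\sup$ over each loop and summing yields
\[c_\epsilon\sum_N e^{-(s-t\alpha+|t|\epsilon)\tau_N}\le \sum_N e^{\sup_{C^{(N)}}(t\overline{\Delta_g}-s\overline{\tau})}\le C_\epsilon\sum_N e^{-(s-t\alpha-|t|\epsilon)\tau_N}.\]
Comparing with the definition of $s_\infty$: if $s>s_\infty+t\alpha$, choose $\epsilon$ small enough that $s-t\alpha-|t|\epsilon>s_\infty$, so the upper bound converges and the pressure is finite; if $s<s_\infty+t\alpha$, choose $\epsilon$ small enough that $s-t\alpha+|t|\epsilon<s_\infty$, so the lower bound diverges and the pressure is infinite. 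Letting $\epsilon\to0$ identifies the threshold as $s_\infty+t\alpha$, as claimed.

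I expect the main obstacle to be the uniform control of the ``low cylinder'' remainder in the key estimate: one must observe that, although any first-return loop to $C_0$ necessarily runs through arbitrarily high cylinders (where \eqref{eq:alpha} applies), it meets the finite block $C_0,\dots,C_M$ only a bounded number of times, so the part of $\overline{\Delta_g}-\alpha\overline{\tau}$ not absorbed into $\epsilon\,\overline{\tau}$ stays bounded by a constant independent of the loop length $N$. Everything else is bookkeeping with the full-shift pressure criterion together with the passage $\epsilon\to0$; the case $h(\Phi)=\infty$ (where $s_\infty=\infty$) is degenerate and handled by the same comparison.
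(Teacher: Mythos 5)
Your proof is correct, and it takes a genuinely different route from the paper's. The paper argues measure-theoretically on the induced full shift via the Variational Principle: for the lower bound it uses Remark~\ref{rmk:s infty} (which assumes $h(\Phi)<\infty$) to produce $\overline\sigma$-invariant probability measures with $\int\overline\tau\,{\rm d}\overline\nu_n\to\infty$, hence $\int\overline{\Delta_g}\,{\rm d}\overline\nu_n/\int\overline\tau\,{\rm d}\overline\nu_n\to\alpha$, forcing $P_{\overline\sigma}(\overline{t\Delta_g-s\tau})=\infty$ whenever $s<s_\infty+t\alpha$; for the upper bound it splits invariant measures according to whether $\bigl|\int\overline{\Delta_g}\,{\rm d}\overline\nu/\int\overline\tau\,{\rm d}\overline\nu-\alpha\bigr|<\epsilon/2$ (free energy dominated by $P_{\overline\sigma}(-(s_\infty+|t|\epsilon/2)\overline\tau)<\infty$) or not (in which case it asserts that \eqref{eq:alpha} yields a uniform bound $\int\overline\tau\,{\rm d}\overline\nu<C$). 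You instead work combinatorially, through the $1$-cylinder criterion for finiteness of pressure on a countable full shift together with the fact that the renewal shift has exactly one first-return loop per return time. The two approaches are closely linked: your pointwise estimate $|\overline{\Delta_g}-\alpha\overline\tau|\le\epsilon\,\overline\tau+B_\epsilon$, proved by observing that each symbol in $\{0,\dots,M\}$ occurs at most once per loop, is precisely what the paper's unproved ``uniform constant $C$'' assertion rests on (integrating your estimate shows that a ratio $\epsilon/2$-far from $\alpha$ forces $\int\overline\tau\,{\rm d}\overline\nu\le 4B_{\epsilon/4}/\epsilon$). What your route buys: it makes that hidden step explicit, it bypasses Remark~\ref{rmk:s infty} and hence the hypothesis $h(\Phi)<\infty$, and it handles $s_\infty=\infty$ with no extra work. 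What it costs: it is wedded to the renewal combinatorics and to the full-shift cylinder criterion (citable from \cite{mubook,Sar03}), whereas the paper's variational argument is shorter given Remark~\ref{rmk:s infty} and would transfer to any base map for which the uniform-$C$ claim can be verified.
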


\begin{proof}
For $t=0$ the result is obvious so we will assume throughout the proof that $t\neq 0$.
If we let $s<s_{\infty}+t\alpha$ then since, as in Remark~\ref{rmk:s infty}, there exists a sequence of $\overline{\sigma}$-invariant probabily measures $\overline{\nu_n}$ such that $\lim_{n\to\infty}\int\overline{\tau}~{\rm d}\overline{\nu_n}=\infty$ and thus $\lim_{n\to\infty}\frac{\int\overline{\Delta_g}~{\rm d}\overline{\nu_n}}{\int\overline{\tau}~{\rm d}\overline{\nu_n}}=\alpha$ it follows that
$P_{\overline{\sigma}}(\overline{-s\tau+\Delta_g})=\infty$. Hence $\inf\{s:P_{\overline{\sigma}}(\overline{t\Delta_g-s\tau})<\infty\}\ge s_{\infty}+t\alpha$.

Let $\epsilon>0$ and let $\overline{\nu}$ be a $\overline\sigma$-invariant probability measure where $\int\overline{\tau}~{\rm d}\overline{\nu}<\infty$ and $\left|\frac{\int\overline{\Delta_g}~{\rm d}\overline{\nu}}{\int\overline{\tau}~{\rm d}\overline{\nu}}-\alpha\right|<\frac{\epsilon}{2}.$
We then have that
\begin{align*}
h_{\overline\sigma}(\overline{\nu})-(s_{\infty}+t\alpha+|t\epsilon|)\int\overline{\tau}~{\rm d}\overline{\nu}+t\int\overline{\Delta_g}~{\rm d}\overline{\nu}
& \leq h_{\overline\sigma}(\overline{\nu})-\left(s_{\infty}+\frac{|t\epsilon|}{2}\right)\int\overline{\tau}~{\rm d}\overline{\nu}\\
&\leq P_{\overline{\sigma}}(-(s_{\infty}+|t\epsilon|/2)\overline{\tau})<\infty.
\end{align*}
On the other hand, \eqref{eq:alpha} implies that given $\epsilon>0$, there exists $C>0$ such that \begin{equation}
\left|\frac{\int\overline{\Delta_g}~{\rm d}\overline{\nu}}{\int\overline{\tau}~{\rm d}\overline{\nu}}-\alpha\right|>\frac{\epsilon}{2}
\label{eq: s inf bit}
\end{equation}
implies that $\int\overline{\tau}~{\rm d}\overline{\nu}<C$.  Again using \eqref{eq:alpha}, this means that $h_{\overline\sigma}(\overline{\nu})-(s_{\infty}+t\alpha+|t\epsilon|)\int\overline{\tau}~{\rm d}\overline{\nu}+t\int\overline{\Delta_g}~{\rm d}\overline{\nu}$ is bounded above, with the bound depending only on $s_\infty, t, \alpha, \epsilon$  and condition \eqref{eq: s inf bit}.  Therefore the variational principle implies that $P_{\overline{\sigma}}(\overline{t\Delta_g-(s_\infty+t\alpha+\delta)\tau})<\infty$ for any $\delta>0$.  Hence $\inf\{s:P_{\overline{\sigma}}(\overline{t\Delta_g-s\tau})<\infty\}\le s_{\infty}+t\alpha$, as required.
\end{proof}

The lemma immediately implies that $P_{\Phi}(tg) \geq s_{\infty}+t\alpha$. We now let
$$I:=\{t:P_{\overline{\sigma}}(\overline{-(s_\infty+t\alpha)\tau+t\Delta_g})\leq 0\}$$
and note that by the convexity of pressure this is either the empty set or an interval.

\begin{prop}\label{PT}
The regularity of the pressure function is given by
\begin{enumerate}[label=({\alph*}),  itemsep=0.0mm, topsep=0.0mm, leftmargin=7mm]
\item
For $t\in I$ we have that $P_{\Phi}(t g)=s_{\infty}+t\alpha$. Moreover if $t\in \text{int}(I)$ then either $g$ is such that $\Delta_g$ is  cohomologous to the function $\alpha \tau$ or $tg$ is transient.
\item
For $t\in\R\backslash I$ the function $t\to P_{\Phi}(t g)$ varies analytically and $tg$ is positive recurrent.
\end{enumerate}
\end{prop}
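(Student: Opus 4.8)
The plan is to transfer the whole problem to the induced full shift and reduce it to a one-variable analysis of pressure along vertical and critical lines. Fix $t\in\R$ and set
$$p_t(s):=P_{\overline{\sigma}}\bigl(t\overline{\Delta_g}-s\overline{\tau}\bigr),\qquad Q(t):=p_t(s_\infty+t\alpha),$$
so that $I=\{t:Q(t)\le 0\}$ and, by Lemma~\ref{lem:indu-pre flow} applied to $tg$, $P_\Phi(tg)=\inf\{s:p_t(s)\le 0\}$. The structural input I would record first is: by Lemma~\ref{lem:critvalue}, $p_t(s)=+\infty$ for $s<s_\infty+t\alpha$, while on $(s_\infty+t\alpha,\infty)$ the function $p_t$ is finite, convex, real analytic, and strictly decreasing, the latter because $\partial_s p_t=-\int\overline{\tau}\,{\rm d}\overline\mu_s<0$ with $\overline\tau>0$; moreover $Q$, being a supremum of affine functions of $t$, is convex and lower semicontinuous, so $I$ is an interval and $\R\setminus I=\{Q>0\}$ is open.

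For part (a), fix $t\in I$: then $Q(t)=p_t(s_\infty+t\alpha)$ is finite and $\le 0$, whereas $p_t=+\infty$ below $s_\infty+t\alpha$, so the infimum defining $P_\Phi(tg)$ is attained at the threshold and $P_\Phi(tg)=s_\infty+t\alpha$. For the moreover clause I would use convexity of $Q$: either $Q<0$ throughout $\mathrm{int}(I)$, or $Q(t_0)=0$ for some interior $t_0$, in which case a convex function attaining the value $\sup_I Q=0$ at an interior point must be constant, so $Q\equiv 0$ on $I$. In the first alternative, for $t\in\mathrm{int}(I)$ we have $P_{\overline\sigma}(t\overline{\Delta_g}-(s_\infty+t\alpha)\overline\tau)<0$; since the sign of $P_\sigma(\psi)$ matches that of $P_{\overline\sigma}(\overline\psi)$ (a consequence of the inducing description of $P_\sigma$ together with $r_a\ge 1$) and $P_\Phi(tg)=s_\infty+t\alpha$, we obtain $P_\sigma(\Delta_{tg}-P_\Phi(tg)\tau)<0$, so Theorem~\ref{thm:flow rec}(c) shows $tg$ fails recurrence, i.e.\ it is transient. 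In the second alternative, $Q\equiv 0$ means the pressure is constant along the affine family $t\mapsto t(\overline{\Delta_g}-\alpha\overline\tau)-s_\infty\overline\tau$; a standard second-derivative (asymptotic variance) argument then forces $\overline{\Delta_g}-\alpha\overline\tau$ to be cohomologous to a constant, necessarily $0$, whence $\overline{\Delta_g}$ is cohomologous to $\alpha\overline\tau$. Descending this coboundary to the base yields $\Delta_g\sim\alpha\tau$, which is exactly the statement that $g$ is cohomologous to the constant $\alpha$ on $Y_R$.

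For part (b), fix $t\in\R\setminus I$, so $Q(t)>0$, i.e.\ $p_t(s)>0$ as $s\downarrow s_\infty+t\alpha$. Since $p_t$ is continuous and strictly decreasing on $(s_\infty+t\alpha,\infty)$ and $P_\Phi(tg)<\infty$, there is a unique $s_0=s_0(t)>s_\infty+t\alpha$ with $p_t(s_0)=0$, and $P_\Phi(tg)=s_0$. For analyticity I would set $F(t,s):=P_{\overline\sigma}(t\overline{\Delta_g}-s\overline\tau)$ on the open region $\{s>s_\infty+t\alpha\}$, where by the full-shift theory of \cite{Sar01a} $F$ is jointly real analytic and $\partial_s F(t,s)=-\int\overline\tau\,{\rm d}\overline\mu_{t,s}<0$ is finite; the analytic implicit function theorem at $(t,s_0)$ then gives $t\mapsto s_0(t)=P_\Phi(tg)$ real analytic on $\R\setminus I$. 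Finally, at this interior parameter the full-shift potential $t\overline{\Delta_g}-s_0\overline\tau$ has finite pressure, hence is positive recurrent with a Gibbs equilibrium measure $\overline\mu$ for which $\int\overline\tau\,{\rm d}\overline\mu=-\partial_s F<\infty$ (\cite{Sar03}); projecting $\overline\mu$ via Kac's formula (Remark~\ref{rmk:Kac and Ab}) produces a finite equilibrium measure for $tg$, so $tg$ is positive recurrent.

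I expect the two non-routine points to be the main obstacles. First, in part (b), verifying the joint real analyticity of $F$ together with the strict negativity and finiteness of $\partial_s F$ needed to run the implicit function theorem; this rests on the full-shift pressure theory and on the strict positivity and integrability of $\overline\tau$ at interior parameters. Second, in the degenerate branch $Q\equiv 0$ of part (a), carefully justifying that vanishing asymptotic variance yields a coboundary and that this coboundary descends from the induced system to the base, so as to conclude that $g$ is flow-cohomologous to the constant $\alpha$.
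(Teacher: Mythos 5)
Your overall route is the same as the paper's: reduce everything to the induced full shift via Lemma~\ref{lem:indu-pre flow}, use Lemma~\ref{lem:critvalue} to locate the finiteness threshold $s_\infty+t\alpha$, obtain part (a) and the transient/cohomologous dichotomy from convexity of your function $Q$, and prove part (b) by the implicit function theorem using $\partial_s P_{\overline{\sigma}}(\overline{-s\tau+t\Delta_g})=-\int\overline{\tau}\,{\rm d}\overline{\nu_t}<0$. There is, however, one step in your part (a) that is false as stated: the claim that ``the sign of $P_\sigma(\psi)$ matches that of $P_{\overline\sigma}(\overline{\psi})$'', from which you deduce $P_\sigma(t\Delta_g-P_\Phi(tg)\tau)<0$ before applying Theorem~\ref{thm:flow rec}(c). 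This sign-matching principle fails precisely in the regime you are analysing: the paper's Section~\ref{sec:Hof roof} and Example~\ref{eg:Hof nlogn} exhibit $P_{\overline{\sigma}}(-s_\infty\overline{\tau})<0$ while $P_\sigma(-s_\infty\tau)=0$, with $-s_\infty\tau$ transient. Indeed, negative induced pressure coexisting with zero pressure on the base is exactly what happens in the transient regime, so the implication you assert would beg the question. The conclusion survives with the complementary argument: if $t\Delta_g-P_\Phi(tg)\tau$ were recurrent with $P_\sigma(t\Delta_g-P_\Phi(tg)\tau)=0$, then by the renewal-equation relation between $Z_n$ and $Z_n^*$ underlying \cite{Sar01a} its induced pressure would have to vanish, contradicting $P_{\overline{\sigma}}(\overline{t\Delta_g-P_\Phi(tg)\tau})<0$; hence $tg$ cannot be recurrent, i.e.\ it is transient. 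You should replace the sign-matching claim by this argument (the paper states this branch equally tersely, but never relies on the false principle).

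In the degenerate branch $Q\equiv 0$ your treatment also diverges from the paper's, and is the weaker for it. You appeal to a vanishing-asymptotic-variance criterion and then propose to ``descend the coboundary to the base''; both steps are genuinely nontrivial on a countable full shift (the variance criterion requires second-order differentiability of the pressure and integrability hypotheses, and a coboundary relation for the induced system does not automatically descend to the original system), and you flag them rather than prove them. The paper's argument avoids both difficulties: since $t\mapsto P_{\overline{\sigma}}(\overline{-s_\infty\tau+t(\alpha\tau+\Delta_g)})$ is identically $0$ on $\text{int}(I)$, differentiating in $t$ gives $\int\overline{\Delta_g-\alpha\tau}\,{\rm d}\mu_t=0$ for the Gibbs state $\mu_t$, so one fixed $\mu_t$ is simultaneously an equilibrium measure for the potentials at all parameters in $\text{int}(I)$; then \cite[Theorem 2.2.7]{mubook} yields that $\overline{\Delta_g-\alpha\tau}$ is cohomologous to a constant, which must be $0$, and one concludes by observing that consequently every $\Phi$-invariant probability measure satisfies $\int g\,{\rm d}\mu=\alpha$ --- no pointwise descent of a coboundary is needed. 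I recommend substituting these arguments for your two flagged steps; with those repairs your proof coincides with the paper's.
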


\begin{proof}
We prove the two items in separate ways.

For (a), fix $t\in I$. Then Lemma \ref{lem:critvalue} implies that if $s<s_{\infty}+t\alpha$ then $P_{\overline{\sigma}}(\overline{-s\tau+t\Delta_g})=\infty$.  Since $t\in I$,
$P_{\overline{\sigma}}(\overline{-(s_{\infty}+t\alpha){\tau}+\Delta_g})\le 0$, so
by Lemma \ref{lem:indu-pre flow} we have that $P_{\Phi}(t g)=s_{\infty}+t\alpha$. Furthermore since $\overline{\tau}$ and $\overline{\Delta_g}$ are locally H\"{o}lder the function $t\to P_{\overline{\sigma}}(\overline{-s_{\infty}\tau+t(\alpha\tau+\Delta_g)})$ is analytic and convex for $t\in \text{int}(I)$ (see \cite{Sar03}). Thus either $P_{\overline{\sigma}}(\overline{-(s_{\infty}+t\alpha)\tau+t\Delta_g})<0$ for all $t\in  \text{int} I$ in which case $tg$ is transient, or $P_{\overline{\sigma}}(\overline{-(s_{\infty}+t\alpha)\tau+t\Delta_g})=0$ for all $t\in I$.

To complete the proof of (a), suppose that $\text{int}(I)\neq\emptyset$ and  $P(-\overline{(s_{\infty}+t\alpha)\tau+t\Delta_g})=0$ for all $t\in I$. Since $P_{\overline{\sigma}}(\overline{-s_\infty\tau+t(\alpha\tau+\Delta_g)})=0$ for all $t\in\text{int}(I)$ the associated Gibbs state $\mu_t$ has $\int\overline{\Delta_g-\alpha\tau}~{\rm d}\mu_t=0$. Thus $\mu_t$ is an equilibrium measure for $\overline{-(s_{\infty}+t\alpha)\tau+t\Delta_g}$ for all $t\in \text{int}(I)$. By \cite[Theorem 2.2.7]{mubook} this implies that $\overline{\Delta_g-\alpha\tau}$ is cohomologous to a constant which must be $0$. Thus $\Delta_g$ must be cohomologous to $\alpha\tau$.

For (b) we can follow the method from Proposition 6 in \cite{bi1}.
Note that the function $s\to P_{\overline{\sigma}}(\overline{-s\tau+t\Delta_g})$ is real analytic and decreasing for $s>s_{\infty}+t\alpha$.
So if $J\subseteq\R$ is an open interval for which $J\cap I=\emptyset$ then for $t\in J$ we can define $P_{\Phi}(tg)$ implicitly by
$$P_{\overline{\sigma}}(\overline{-P_{\Phi}(tg)\tau+t\Delta_g})=0.$$
If we let $\overline{\nu_t}$ denote the equilibrium measure for $\overline{-P_{\Phi}(tg)\tau+t\Delta_g}$ then it follows that
$$\frac{\partial}{\partial s}P_{\overline{\sigma}}(\overline{-s\tau-t\Delta_g})\left|_{s=P_{\Phi}(tg)}\right.=-\int\overline{\tau}~{\rm d}\mu_t<0.$$
Thus we can apply the implicit function theorem to show that $t\to P_{\Phi}(tg)$ is analytic on $J$. Hence $\nu=R(\mu_t)$ will be the equilibrium measure for $tg$ and so $tg$ is positive recurrent.
\end{proof}
Thus we have phase transitions if $I$ is an interval not equal to $\R$.

\begin{eje}
First of all we give an example of a potential  $g$ where $t\to P_{\Phi}(tg)$ is analytic for the whole of $\R$.
We define
$$\tau(x)=\log (n+2)-\log (n+1)\text{ if }x\in C_n$$
and suppose $g$ satisfies
$$\Delta_g(x)=-\log\log\log (n+2)+\log\log\log (n+1)\text{ if }x\in C_n.$$
This gives that
$$\overline{\tau}(x)=\log (n+1)\text{ if }x\in \overline C_n$$
and
$$\overline{\Delta_g}(x)=-\log\log\log (n+1)\text{ if }x\in \overline C_n.
$$
Using the notation above this means that $s_{\infty}=1$ and $\alpha=0$ (recall that $\alpha$ is defined in \eqref{eq:alpha}). For any $t\in\R$ we have that
$$P_{\overline{\sigma}}(-s_\infty\overline{\tau+t\Delta_g})=\log\left(\sum_{n=1}^{\infty}\frac{(\log\log(n+2))^{-t}}{n+2}\right)=\infty.$$
Thus we have that $I=\emptyset$ and by Proposition \ref{PT} the function $t\to P_{\Phi}(tg)$ is analytic.
\end{eje}

\begin{eje} \label{eg:2phase}
We now give an example with two phase transitions. We choose $K>2$  to satisfy $\sum_{n=0}^{\infty}((n+K)(\log(n+K))^2)^{-1}<\frac{1}{9}$. We suppose that
$$\tau(x)=\left\{\begin{array}{lll}\log 2&\text{ if }&x\in C_0 \\
\log(K)-\log 2&\text{ if }&x\in C_1\\
\log(K+n-1)-\log (K+n-2)&\text{ if }&x\in C_n\text{ for }n\geq 2\end{array}\right.
$$
and $g$ satisfies
$$\Delta_g(x)=\left\{\begin{array}{lll}\log(4/3)&\text{ if }&x\in C_0 \\
-\log\log(K)-\log (4/3)&\text{ if }&x\in C_1\\
-\log\log(K+n-1)+\log\log (K+n-2)&\text{ if }&x\in C_n\text{ for }n\geq 2\end{array}\right.
$$
This gives
$$\exp(\overline{-\tau+t\Delta_g}(x))=\frac{1}{2}\left(\frac{4}{3}\right)^t\text{ if }x\in C_0$$
and for $n\geq 2$
$$\exp(\overline{-\tau+t\Delta_g}(x))=\frac{1}{\log(K+n-2)}(\log (K+n-2))^{-t}\text{ if }x\in C_n.$$
As in the previous example we have that $s_{\infty}=1$ and $\alpha=0$. For $t\in\R$ we obtain
$$P_{\overline{\sigma}}(\overline{-s_\infty\tau+t\Delta_g})=\log\left(\frac{1}{2}\left(\frac{4}{3}\right)^t+\sum_{n=2}^{\infty}\frac{1}{ K+n-1}(\log (K+n-1))^{-t}\right).$$
Now for $t\leq 1$ this is divergent and for $t>1$ this is convergent. If we take $t=2$ then
$$P_{\overline{\sigma}}(\overline{-s_{\infty}\tau+2\Delta_g})=\log\left(\frac{8}{9}+\sum_{n=0}^{\infty}((n+K)(\log(n+K))^2)^{-1}\right)<0,$$
and so $\underline{t}=\inf\{t:P_{\overline{\sigma}}(-\overline{\tau+t\Delta_g}(x))\leq 0\}\in (1,2)$. Furthermore for $t=3$, we have that $\frac{1}{2}\left(\frac{4}{3}\right)^t>1$ and so $P_{\overline{\sigma}}(\overline{-s_{\infty}\tau+3\Delta_g})>0$. Thus
$\overline{t}=\sup\{t:P_{\overline{\sigma}}(-\overline{\tau+t\Delta_g})\leq 0\}\in (2,3)$. Therefore $I=[\underline{t},\overline{t}]$ and for $t\in I$ we have $P_{\Phi}(tg)=s_{\infty}=1$. There are phase transitions at $\underline{t}$ and $\overline{t}$ and outside $I$ the function $t\to P_{\Phi}(tg)$ varies analytically and is strictly greater than $s_{\infty}$.
\end{eje}

\subsection{Improving (or not) recurrence properties}

 In this sub-section, we discuss the idea that by suspending a system with a roof function not bounded away from zero, we can
speed up the return times improving the mixing properties and therefore obtaining better thermodynamics.   Indeed, we demonstrate that we can arrange roof functions and potentials so that any pair from the set $\{$positive recurrent, null recurrent, transient$\}$  can be produced with the first behaviour on the base and the second on the corresponding flow.
 Again the example we consider is the renewal flow $\Phi_R$, with roof function $\tau$
and potential $g: Y \to \R$.
We say that the recurrence properties of the potential $\Delta_g:\Sigma \to \R$   \emph{improve} if $\Delta_g$ is transient and $g$ is recurrent or
if $\Delta_g$ is null-recurrent and $g$ is positive recurrent. In fact it is possible for $g$ to be positive-recurrent, null-recurrent or transient for all behaviours of $\Delta_g$.

\begin{eje} \label{eg:better-worse}
Let $\tau\in \mathcal{R}$ be a roof function such that
\begin{equation*}
P_\sigma(-t \tau)=
\begin{cases}
\text{positive} & t < 1;\\
0 & t \geq 1.
\end{cases}
\end{equation*}
By virtue of Proposition \ref{prop:BI ana} we have that for $t<1$ the potential $-t \tau$ is positive recurrent and for $t >1$ the potential $-t \tau$ is transient. Moreover, for $t=1$ the potential can be positive recurrent, null-recurrent or transient (see Section~\ref{sec:Hof roof}  and \cite[Example 2]{Sar01a}).
 Consider the renewal flow $\Phi_R$ with roof function $\tau$.  The corresponding potential $\Delta_g = C \tau$ is such that
\[P_\Phi(tg)= \inf \{ q \in \R : P_\sigma(tC \tau - q \tau)  \leq 0\}.\]
Since $P_\Phi(tC \tau - q \tau)= P_\sigma((tC-q) \tau)$ we obtain
\[P_\sigma(tg)= tC+1.\]
The above, of course, could have been obtained from the variational principle for the flow.    In each of the cases below we are able to make our conclusion using Proposition~\ref{prop:ren-eq}.

\begin{enumerate}
\item Let us consider first the case in which the potential $-\tau$ is transient. In this case the potential $t\Delta_g -P_{\Phi}(g) \tau= -\tau$ is transient, therefore the potential $tg$ is transient for every $t \in \R$. However, if $t<-1/C$ the potential $t\Delta_g$ is positive recurrent,
whereas if $t\geq -1/C$ then $t\Delta_g$ is transient.
\item
Assume that $-\tau$ is null-recurrent with infinite measure $\nu$ such that $\int \tau~{\rm d}\nu=\infty$. In this case $t g$ is null recurrent for all $t\in\R$ whereas $t\Delta_g$ is positive recurrent if $t<-1/C$, null recurrent if $t=-1/C$ and transient if $
t>-1/C$.
\item Assume that  $-\tau$ is null-recurrent with infinite measure $\nu$ such that $\int \tau~{\rm d} \nu < \infty$ (see Lemma \ref{lem:MP} for an example of this). In this case $t g$ is positive recurrent for all $t\in\R$. However if $t<-1/C$ then the potential  $t\Delta_g$ is positive recurrent, if $t=-1/C$ then $t\Delta_g$ is null-recurrent and if $t>-1/C$ then $t\Delta_g$ is transient.

\end{enumerate}
\end{eje}

The simple example above shows that the recurrence properties and the thermodynamic formalism  can either improve or get worse by suspending with a roof function not bounded away from zero. In fact we have shown that all combinations of behaviour are possible except $g$ transient and $\Delta_g$ null recurrent. We finish this subsection with an example to show that this is also possible.  Observe that in this example the roof function is bounded away from zero.
\begin{eje}
We let $f\in\mathcal{R}$ be a positive function such that
\begin{equation*}
P_\sigma(-t f)=
\begin{cases}
\text{positive} & t < 1;\\
0 & t \geq 1
\end{cases}
\end{equation*}
and $-f$ is null recurrent. As in Proposition~\ref{prop:BI ana}, for $t>1$ the potential $-tf$ must be transient.  We let $\tau=f+1$ and let $\Delta_g=-f+1$ which is null recurrent with $P_\sigma(\Delta_g)=1$. We
then have that $P_{\sigma}(\Delta_g-\tau)=P_{\sigma}(-2f)=0$ and so $P_{\Phi}(g)=1$ with $g$ transient. Therefore $\Delta_g$ is null recurrent and $g$ is transient.
\label{eg:trans null}
\end{eje}

\section{Suspension flows over Manneville-Pomeau maps}
\label{sec:MP flow}
\subsection{ Manneville-Pomeau flows} \label{ssec:mpflows}
In this section we study suspension flows over a simple non-uniformly hyperbolic interval map, namely the Manneville-Pomeau map \cite{ManPom}.
We give the form studied in \cite{LivSaVai}. For $  \alpha> 0$ the map is defined by
\begin{equation*}
f(x) =
\begin{cases}
x(1 + 2^{\alpha}x^{\alpha}) & \text{ if }  x \in [0, 1/2),\\
2x - 1 & \text{ if }  x \in[1/2, 1).
\end{cases}
\end{equation*}
The pressure function of the potential $-\log |f'|$ satisfies the following (see  \cite{Lo,Sar01a}),
\begin{equation*}
P_f(-t \log |f'|)=
\begin{cases}
\text{strictly convex and real analytic} & \text{ if } t <1,\\
0 & \text{ if } t \geq 1.
\end{cases}
\end{equation*}
It is well known (see \cite{Lo,Sar01a}) that if $\alpha \in (0, 1)$ then  there exists an absolutely continuous invariant measure. This measure together with the Dirac delta at zero are the equilibrium measures for $-\log|f'|$.
If $\alpha >1$ then  there is no absolutely continuous invariant probability measure and the only equilibrium measure for $-\log|f'|$ is the Dirac delta at zero.  However, there exists an  infinite $f$-invariant measure $\nu$ absolutely continuous with respect to Lebesgue. This measure is such that (see \cite[p.849]{Nak00})
\begin{equation} \label{eq:fini}
\int \log|f'|~{\rm d} \nu < \infty.
\end{equation}

If we remove the parabolic fixed point at zero and its preimages, then the (non-compact) dynamical system that is left can be coded by the renewal shift (see \cite{Sar01a}). More precisely, if  we denote by $\Omega = [0,1] \setminus \cup_{n=0}^{\infty} f^{-n}(0)$, then the map $f$ restricted to $\Omega$ can be coded by the renewal shift.

We define the \emph{Manneville-Pomeau flow}, that we denote by $\Phi_{mp}$,  as the suspension semi-flow with base $f(x)$ and roof-function $\log |f'|$. We denote by $Y$ its phase space. This flow has a singularity at $(0,0)$ and there exists an atomic invariant measure supported on it. If we remove the point $(0,0)$ and all its pre-images, then the non-compact semi-flow that is left can be coded as a renewal flow with roof function the symbolic representation of $\log |f'|$. We denote this renewal flow by $\Phi_R$. The set of invariant measures for the Manneville-Pomeau flow is in one to one correspondence with the set of invariant measures for  the renewal flow, denoted by $\M_{\Phi_R}$, together with the atomic measure supported at $(0,0)$.

\begin{rem}
Using the standard inducing scheme on the interval $[1/2, 1)$, yields an expression for the Manneville-Pomeau flow to which the main theorem of \cite{BalVal05} applies directly.  This means that the associated flow has exponential decay of correlations for $C^1$ observables for the SRB measure on the flow.  See  \cite[Section 1]{BalVal05} and \cite[Section 4]{Pol99} for details.
We might expect similar results for equilibrium states for Rovella flows, see \cite{PacTod10}, possibly requiring the subexponential decay results of Melbourne \cite{m2}.  We also remark that Holland, Nicol and T\"or\"ok have studied Extreme Value Theory for semi-flows built over the Manneville-Pomeau map in \cite{hnt}.
\end{rem}

Let $g:Y \to \R$ be a bounded potential and consider the map $\Delta_g: [0,1] \to \R$ defined by
\[ \Delta_g(x)= \int_0^{\log|f'(x)|} g(t,x) dt.\]
Denote by $\Delta_g^r$ the symbolic representation of $\Delta_g(x)$  in the renewal shift. We will develop a thermodynamic formalism for the following class of potentials:
\[ \mathcal{MP}:= \left\{g:Y \to \R : g \text{ is bounded and }  \Delta_g^r(x) \in \mathcal{R}  \right\}.\]
We define the pressure of a potential $g \in \mathcal{MP}$ by
\begin{equation*}
P_{\Phi_{mp}}(g):=\sup \left\{h_f(\mu) + \int \ g~{\rm d} \mu : \mu \in \mathcal{E}_{mp}  \right\},
\end{equation*}
where $\mathcal{E}_{mp}$ denotes the set of ergodic $\Phi_{mp}$-invariant probability measures. We stress that  $\mathcal{E}_{mp}$ is in one to one correspondence with the set  $\mathcal{E}_{\Phi_R} \cup \delta_{(0,0)}$.

\begin{lema}\label{lem:MP}
Every Manneville-Pomeau flow has entropy equal to one and there exists a unique measure of maximal entropy.
\end{lema}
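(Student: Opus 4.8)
The plan is to first compute $h(\Phi_{mp})$ from the flow-pressure formula, and then identify the recurrence mode of the roof potential so as to apply Corollary~\ref{cor:meas_max}. For the entropy, I would use that the ergodic invariant measures of $\Phi_{mp}$ are, by the correspondence recalled above, exactly those of the renewal flow $\Phi_R$ together with the atom $\delta_{(0,0)}$; the latter is a flow fixed point (the roof vanishes at the parabolic point) and has zero entropy, so $h(\Phi_{mp})=h(\Phi_R)=P_{\Phi_R}(0)$. By Theorem~\ref{thm: flow pres},
\[P_{\Phi_R}(0)=\inf\{t\in\R:P_\sigma(-t\tau)\le 0\},\]
where $\tau$ is the symbolic coding of $\log|f'|$. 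Since the renewal coding preserves pressure, $P_\sigma(-t\tau)=P_f(-t\log|f'|)$, which vanishes for $t\ge 1$ and is strictly positive for $t<1$ (being non-increasing, strictly convex on $(-\infty,1)$, and equal to $0$ at $t=1$). Hence the infimum equals $1$ and $h(\Phi_{mp})=1$.

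Next I would determine the recurrence type of $-h(\Phi)\tau=-\log|f'|$ at the critical value, where $P_\sigma(-\tau)=0$. Lebesgue measure is $\log|f'|$-conformal with eigenvalue $1=e^{P_\sigma(-\tau)}$, and the absolutely continuous $f$-invariant measure $\nu$ supplies a positive invariant density $h={\rm d}\nu/{\rm d}\,\mathrm{Leb}$; thus $(h,\mathrm{Leb})$ is an RPF pair and $\nu=h\,{\rm d}\,\mathrm{Leb}$ is a conservative RPF measure, so by Theorem~\ref{thm:RPF} the potential $-\log|f'|$ is recurrent. It is positive recurrent when $\nu$ is finite and null recurrent when $\nu$ is infinite. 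In both situations I would check that $\int\tau~{\rm d}\nu=\int\log|f'|~{\rm d}\nu<\infty$: when $\nu$ is finite this follows from the equilibrium relation $h_\nu(f)=\int\log|f'|~{\rm d}\nu\le\log 2$, and when $\nu$ is infinite it is precisely the estimate \eqref{eq:fini}.

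Finally I would apply Corollary~\ref{cor:meas_max} to the finite-entropy flow with $g\equiv 0$. Since $P_\sigma(-h(\Phi)\tau)=0$ and $-h(\Phi)\tau$ is recurrent with RPF measure $\nu$ satisfying $\int\tau~{\rm d}\nu<\infty$, we fall into case (a) (positive recurrent) or case (b) (null recurrent) of that corollary, which produces a unique measure of maximal entropy $\mu=(\nu\times m)|_{Y}/((\nu\times m)(Y))$ for $\Phi_R$. Because $\delta_{(0,0)}$ has entropy $0<1$, this $\mu$ is the unique measure of maximal entropy for $\Phi_{mp}$, completing the proof. The main obstacle is the middle step: verifying that $-\log|f'|$ is recurrent and that $\tau$ is integrable against its (possibly infinite) RPF measure, the null-recurrent regime depending crucially on the nontrivial finiteness \eqref{eq:fini} rather than on any soft argument.
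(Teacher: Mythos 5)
Your proposal is correct and follows essentially the same route as the paper: entropy $=1$ from the pressure equation $\inf\{s: P(-s\log|f'|)\le 0\}$ together with the known behaviour of $t\mapsto P_f(-t\log|f'|)$, existence of the maximal-entropy measure from recurrence of $-\log|f'|$ plus the integrability estimate \eqref{eq:fini}, and uniqueness by combining a uniqueness theorem with the fact that $\delta_{(0,0)}$ has zero entropy. The only differences are cosmetic: you justify recurrence explicitly via the conservative conformal (Lebesgue/RPF) pair and invoke Corollary~\ref{cor:meas_max}, whereas the paper simply cites the known recurrence of $-\log|f'|$ and the Buzzi--Sarig uniqueness result \cite{busa}.
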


\begin{proof}
Recall that $h(\Phi_{mp})= \inf \{s \in \R : P(-s \log |f'|) \leq 0 \}. $ Since
\[P_f(-\log|f'|)=0,  \]
and for every $t<1$  we have that $ P_f(-t\log|f'|)>0$ the entropy of the flow is equal to $1$. The potential $-\log|f'|$ is recurrent and  because of the property stated in equation \eqref{eq:fini} the flow has a measure of maximal entropy. It is unique since potentials of summable variations over countable Markov shifts have at most one equilibrium measure \cite{busa} and the atomic measure supported at $(0,0)$ has zero entropy.
\end{proof}
Note that if $\alpha\geq 1$ then this measure of maximal entropy will be finite for the flow but will project to an infinite invariant measure for the Manneville-Pomeau map.

In our next result we discuss the existence and uniqueness of equilibrium measures for potentials in $\mathcal{MP}$. Denote by $\log^r|f'|$ the symbolic representation of $\log|f'|$ in the renewal shift. First note that it follows from Proposition \ref{prop:ren-eq} that the equation $P(\Delta_g^r -s \log^r|f'|)=0$, always has a root, that we denote by  $P_{\Phi_{mp}}^r(g)$.

\begin{prop}
Let $g \in \mathcal{MP}$.
\begin{enumerate}[label=({\alph*}),  itemsep=0.0mm, topsep=0.0mm, leftmargin=7mm]
\item If $\Delta_g^r -P_{\Phi_{mp}}^r(g) \log^r|f'|$ is positive recurrent then there exists an equilibrium measure for $g$.  Moreover,
\begin{enumerate}[label=({\roman*}),  itemsep=0.0mm, topsep=0.0mm, leftmargin=3mm]
\item If $P_{\Phi_{mp}}^r(g) \neq \int g~{\rm d}\delta_{(0,0)}$ then the equilibrium measure for $g$ is unique.
\item  If $P_{\Phi_{mp}}^r(g) = \int g~{\rm d}\delta_{(0,0)}$ then there are exactly two equilibrium measures for $g$.
\end{enumerate}
\item If $\Delta_g^r -P_{\Phi_{mp}}^r(g) \log^r|f'|$ is null recurrent with infinite measure $\nu$ and we have that $\log^r|f'| \in L^{1}(\nu)$  then there exists an equilibrium measure for $g$.  Moreover,
\begin{enumerate}
\item If $P_{\Phi_{mp}}^r(g) \neq \int g~{\rm d}\delta_{(0,0)}$ then the equilibrium measure for $g$ is unique.
\item  If $P_{\Phi_{mp}}^r(g) = \int g~{\rm d}\delta_{(0,0)}$ then there are exactly two equilibrium measures for $g$.
\end{enumerate}
\item Assume that $\Delta_g^r -P_{\Phi_{mp}}^r(g) \log^r|f'|$ is null recurrent with infinite measure $\nu$ and $\log^r|f'| \notin L^{1}(\nu)$. If  $P_{\Phi_{mp}}^r(g) > \int g~{\rm d}\delta_{(0,0)}$ then there is no equilibrium measure for $g$. On the other hand, if $P_{\Phi_{mp}}^r(g) < \int g~{\rm d}\delta_{(0,0)}$ then $\delta_{(0,0)}$ is the unique equilibrium measure for $g$.
\item  Assume that $\Delta_g^r -P_{\Phi_{mp}}^r(g) \log^r|f'|$ is transient. If  $P_{\Phi_{mp}}^r(g) > \int g~{\rm d}\delta_{(0,0)}$ then there is no equilibrium measure for $g$. On the other hand, if $P_{\Phi_{mp}}^r(g) < \int g~{\rm d}\delta_{(0,0)}$ then $\delta_{(0,0)}$ is the unique equilibrium measure for $g$.
\end{enumerate}
\end{prop}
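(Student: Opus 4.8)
The plan is to reduce everything to the renewal flow $\Phi_R$ together with the single extra orbit $\{(0,0)\}$, exploiting the correspondence $\mathcal{E}_{mp}=\mathcal{E}_{\Phi_R}\cup\{\delta_{(0,0)}\}$ recorded above. First I would note that $(0,0)$ is a fixed point of the flow (since $f(0)=0$ and $\log|f'(0)|=0$), so $\delta_{(0,0)}$ is $\Phi_{mp}$-invariant with $h_{\Phi_{mp}}(\delta_{(0,0)})=0$, and hence has free energy $\int g\,{\rm d}\delta_{(0,0)}$. Combining this with the Variational Principle, and with the fact that by the pressure formula of Theorem~\ref{thm: flow pres} together with the monotonicity of $s\mapsto P_\sigma(\Delta_g^r-s\log^r|f'|)$ (valid since $\log^r|f'|\ge 0$) the renewal-flow pressure $P_{\Phi_R}(g)$ equals the root $P_{\Phi_{mp}}^r(g)$, yields the basic decomposition
\[
P_{\Phi_{mp}}(g)=\max\left\{P_{\Phi_{mp}}^r(g),\ \int g\,{\rm d}\delta_{(0,0)}\right\}.
\]
Every assertion in the proposition is then read off from this identity by deciding which of the two competing free energies is larger and whether it is actually attained.

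For cases (a) and (b), Proposition~\ref{prop:ren-eq} applied to $\Phi_R$ (using that $\log^r|f'|$ is bounded in (a), and integrable against the infinite RPF measure in (b)) produces an equilibrium measure $\mu_g\in\mathcal{E}_{\Phi_R}$ for $g$ with free energy exactly $P_{\Phi_{mp}}^r(g)$; by Theorem~\ref{thm:uni} it is the unique such measure on $\Phi_R$, and being non-atomic it is distinct from $\delta_{(0,0)}$. I would then split three ways. If $P_{\Phi_{mp}}^r(g)>\int g\,{\rm d}\delta_{(0,0)}$ the maximum is attained only by $\mu_g$; if $P_{\Phi_{mp}}^r(g)<\int g\,{\rm d}\delta_{(0,0)}$ it is attained only by $\delta_{(0,0)}$; in both sub-cases there is a unique equilibrium measure, giving~(i). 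If $P_{\Phi_{mp}}^r(g)=\int g\,{\rm d}\delta_{(0,0)}$ then both $\mu_g$ and $\delta_{(0,0)}$ attain the pressure and, being distinct, furnish exactly two equilibrium measures, giving~(ii).

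For cases (c) and (d), the key point is that \emph{no} equilibrium measure exists on $\Phi_R$: this is precisely the content of Theorem~\ref{thm:flow eqstate}(iii) (null recurrent with $\log^r|f'|\notin L^1(\nu)$) and (iv) (transient). Hence the only remaining candidate is $\delta_{(0,0)}$, and the decomposition settles the outcome. If $P_{\Phi_{mp}}^r(g)>\int g\,{\rm d}\delta_{(0,0)}$ then $P_{\Phi_{mp}}(g)=P_{\Phi_{mp}}^r(g)$ strictly exceeds the free energy of $\delta_{(0,0)}$ and is not attained on $\Phi_R$, so there is no equilibrium measure at all. If $P_{\Phi_{mp}}^r(g)<\int g\,{\rm d}\delta_{(0,0)}$ then $P_{\Phi_{mp}}(g)=\int g\,{\rm d}\delta_{(0,0)}$ is attained solely by $\delta_{(0,0)}$, which is therefore the unique equilibrium measure.

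The step I expect to be the main obstacle is justifying the pressure decomposition rigorously rather than heuristically: one must verify that $\delta_{(0,0)}$ does not interact with the renewal-flow spectrum, so that the supremum defining $P_{\Phi_{mp}}(g)$ genuinely splits over the two pieces of $\mathcal{E}_{mp}$, and that the quantity $P_{\Phi_R}(g)$ computed on the open renewal flow truly equals the root $P_{\Phi_{mp}}^r(g)$ even in the non-recurrent cases, where it is a supremum that need not be attained. Once the identity $P_{\Phi_{mp}}(g)=\max\{P_{\Phi_{mp}}^r(g),\int g\,{\rm d}\delta_{(0,0)}\}$ is secured, the remainder is bookkeeping between the two candidate measures, using Proposition~\ref{prop:ren-eq} and Theorem~\ref{thm:uni} for the renewal part and the zero entropy of $\delta_{(0,0)}$ for the atomic part.
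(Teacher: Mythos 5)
Your proposal is correct and follows essentially the same route as the paper: the paper's proof consists precisely of the observation that $P_{\Phi_{mp}}(g)=\max\left\{P_{\Phi_{mp}}^r(g),\ \int g~{\rm d}\delta_{(0,0)}\right\}$ combined with Proposition~\ref{prop:ren-eq}, which is exactly your decomposition plus the renewal-flow analysis. You simply fill in the case-by-case bookkeeping (and flag the identification of $P_{\Phi_R}(g)$ with the root $P_{\Phi_{mp}}^r(g)$) that the paper leaves implicit.
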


\begin{proof}
The proof follows directly from Proposition \ref{prop:ren-eq} and the observation that
\[P_{\Phi_{mp}}(g)=  \max \left\{ P_{\Phi_{mp}}^r(g), \int g~{\rm d}\delta_{(0,0)} \right\}.\]
\end{proof}

\begin{rem}
It is possible to use Proposition \ref{PT} to obtain potentials where the pressure function for the Manneville-Pomeau flow has phase transitions. If we consider the induced potential $\overline{\tau}$ we have that $s_{\infty}=\frac{\alpha}{\alpha+1}$. We now take a negative function $g$ where $\overline{\Delta{g}}$ satisfies
$$\lim_{n\to\infty}\frac{\sup_{x\in C_n}\{\overline{\Delta_g}(x)\}}{\inf_{x\in C_n}\{\overline{\tau}(x)\}}=0$$
and
$$\lim_{n\to\infty} \frac{\inf_{x\in C_n}\{\overline{-\Delta_g}(x)\}}{\log\log n}=\infty.$$
This means that
for all $t>0$ we have $P_{\overline{\sigma}}(\overline{-s_\infty\tau+t\Delta_g})<\infty$ and there will exist $t^*\in\R$ such that $P_{\overline{\sigma}}(\overline{-s_\infty\tau+t\Delta_g})=0$ and for all $t\geq t^*$, $P_{\overline{\sigma}}(\overline{-s_\infty\tau+t\overline{\Delta_g}})<0$. Thus $t\to P_{\Phi}(tg)$ will have a (positive entropy) phase transition at $t^*$.
\end{rem}

\end{document}